\definecolor{olivegreen}{rgb}{0.14,0.29,0}
\newtheorem{exe}{Example}
\newtheorem{corol}{Corollary}
\newtheorem{ass}{Assumption}
\newtheorem{proper}{Property}
\newtheorem{defin}{Definition}
\newtheorem{prob}{Problem}
\newtheorem{cla}{Claim}
\newtheorem{rem}{Remark}
\newtheorem{lem}{Lemma}
\newtheorem{prop}{Proposition}
\newtheorem{thm}{Theorem}
\newtheorem{fct}{Fact}
\newenvironment{lemma}{\begin{lem}}{\hfill $\square$ \end{lem}}
\newenvironment{remark}{\begin{rem} \rm}{\hfill $\bullet$ \end{rem}}
\newenvironment{assumption}{\begin{ass}}{\hfill $\bullet$ \end{ass}}
\newenvironment{property}{\begin{proper}}{\hfill $\bullet$ \end{proper}}
\newenvironment{theorem}{\begin{thm}}{\hfill $\square$ \end{thm}}
\newenvironment{definition}{\begin{defin}}{\hfill $\bullet$ \end{defin}}
\newenvironment{problem}{\begin{prob}}{\hfill $\bullet$ \end{prob}}
 \newenvironment{proof}{\noindent {\it Proof.}}{\hfill \mbox{\footnotesize $\blacksquare$}}
\newif\ifitsdraft
\def\itsdraft{\global\itsdrafttrue}
\title{\LARGE \bf Boundary Control of the Kuramoto-Sivashinsky Equation Under Intermittent Data Availability: With Proofs}
\author{M. Maghenem,  C. Prieur,  and E. Witrant
\thanks{ M. Maghenem,  C. Prieur,  and E. Witrant are with Universit\'e Grenoble Alpes, CNRS, Grenoble-INP, GIPSA-lab, F-38000, Grenoble, France (e-mail: mohamed.maghenem,christophe.prieur,emmanuel.witrant@gipsa- lab.fr).  
This work has been partially supported by MIAI@Grenoble Alpes (ANR-19-P3IA-0003), the French program Investissement d’Avenir,  and by ANR grant HANDY (ANR-18-CE40-0010).  }}
\begin{document}

\maketitle

\begin{abstract}
In this paper,  two boundary controllers are proposed to stabilize the origin of the nonlinear Kuramoto-Sivashinsky equation under intermittent measurements.   More precisely,  the spatial domain is divided into two sub-domains.  The state of the system on the first sub-domain is measured along a given interval of time,  and the state on the remaining sub-domain is measured along another interval of time.  Under the proposed sensing scenario,  we control the considered equation by designing the value of the state at three isolated spatial points,  the two extremities of the spatial domain plus one inside point.  Furthermore,  we impose a null value for the spatial gradient of the state at these three locations.  Under such a control loop,  we propose two types of controllers and we analyze the stability of the resulting closed-loop system in each case.  The paper is concluded with some discussions and future works. 
\end{abstract}

\section{Introduction}

Partial differential equations (PDE)s have numerous applications in many engineering fields including fluid flows in conservation laws  \cite{kuramoto1980instability},  flexible structures  \cite{hac1993sensor},  electromagnetic waves,  and quantum mechanics \cite{rouchon2008quantum}.  The control design for PDEs is a key step to guarantee that the related process achieves a desired behavior in closed loop, i.e.,  a state of interest converges (in an appropriate norm) to an invariant set \cite{schneider2017nonlinear,karafyllis2019input},  or tracks the state of a driving process \cite{kocarev1997synchronizing,guo2020robust}.   Before designing the control input, it is important to know the control actions allowed by the physical process.  Indeed,  some processes allows to act on the dynamics at every spatial point and for all time \cite{armaou1999nonlinear}.  However, in some other processes,  we act intermittently in time or in space \cite{kang2018distributed, tasev2000synchronization}.  Furthermore,  in some scenarios, we can reset the state intermittently in time and at every spatial point \cite{khadra2005impulsive}, but in other scenarios,  we reset the state only at some spatial points \cite{kocarev1997synchronizing},  the latter case corresponds  to the well-studied boundary-control paradigm \cite{krstic2008boundary}.  On the other hand,  it is important to know the outputs available for input design.  In some cases,  we measure the state at every spatial point all the time \cite{krstic2008adaptive}. However,  in most realistic scenarios,  we measure only intermittently in space and time \cite{kang2018distributed,1102875}.   The feedback law,  in consequence,  must  adapt to each of these control and sensing scenarios.  
\ifitsdraft
Note that in the context of ordinary differential equations, temporally-intermittent control strategies (by acting on the dynamics or by doing a state reset) are well studied; see \cite{laila20063,yang2001impulsive}.  Furthermore,  the spatially-intermittent control of finite-dimensional systems corresponds to the scenario where the input affects directly some states but not all of them.  
\fi

Some intermittent control strategies for PDEs are available  in the literature.  In \cite{khadra2005impulsive}, the Gray-Scott and the Kuramoto-Sivashinsky equations are controlled via a periodic reset of the state using impulsive systems theory.   Furthermore,  in \cite{kang2018distributed}, the Kuramoto-Sivashinsky equation is controlled via a periodic update of the input affecting the right-hand side using sample-data control techniques.  A common feature among the aforementioned works is  that the PDEs are controlled at every spatial point.  When controlling  PDEs at isolated spatial points or intervals, the existing control literature considers only the particular case of boundary control.   However,  the physics community,  since the late nineties,  has shown an intensive interest in general  spatially- and temporally-intermittent control of PDEs.  For example,  in \cite{kocarev1997synchronizing},  the Gray-Scott equation is controlled by resetting the state periodically in time and at periodically separated spatial points.  In \cite{tasev2000synchronization, parmananda1997generalized}, the Kuramoto-Sivashinsky equation is controlled by acting on the right-hand side at periodically separated spatial intervals.  In  \cite{junge2000synchronization},  the Ginzburg-Landau equation is studied following the strategy in \cite{tasev2000synchronization}.   The results in  the aforementioned physics literature are guaranteed via simulations, and experiments in some cases.  To the best of our knowledge,  a rigorous study of the aforementioned problems is not available in the literature.

The Kuramoto-Sivashinsky equation is one of the well-studied PDEs in control literature.  In particular,  different types of boundary controllers are proposed to stabilize the origin in a given norm.  For example, in \cite{katz2020finite},  the linear Kuramoto-Shavinsky equation is transformed into an equivalent finite-dimensional linear system using the Sturm-Liouville decomposition.  As a consequence,  a linear feedback law is assigned to one extremity of the spatial domain.  Furthermore,  the nonlinear equation is studied in \cite{Toshidaref,  liu2001stability,  sakthivel2007non, guzman2019stabilization} under various boundary conditions.  It is important to note  that although the aforementioned results  assume point-wise measurements,  the equation is assumed to be either linear or intrinsically stable.  To the best of our knowledge,  boundary control of the nonlinear Kuramoto-Sivashinski equation without restricting the destabilizing coefficient is  studied only in \cite{coron2015fredholm},  where the state,  on the whole spacial domain,  is assumed to be available all the time.

In this paper,  we study boundary control of the nonlinear Kuramoto-Sivashinsky equation,  without  restricting the destabilizing coefficient,  under intermittent measurements.   Strictly speaking,  we  measure the state on a given spatial sub-domain along a given interval of time and, then, we measure the state on the remaining spatial sub-domain along another time interval.   As a result,  we do not measure the state,  on the whole spatial domain,  at the same time.   Under the proposed sensing scenario,   we control the considered equation by designing  the state at three isolated points,  the two extremities of the spatial domain plus one inside point.  Furthermore, we impose a null value for the spatial gradient of the state at these three locations.   Two types of design approaches are proposed.  In the first one,  we design feedback laws at the two extremities and set the input  to zero at the inside point.  In the second case,  we design a feedback law at one extremity and at the inside point,  and set the input to zero at the remaining extremity.   The stability properties of the resulting closed-loop system are analyzed in each case using Lyapunov methods.

The remainder of the paper is organized as follows.  The problem formulation is in Section \ref{SecProbForm}.  The proposed Lyapunov-based approach is described in Section \ref{secGenApp}.  The main results are in Sections \ref{Sec.DBC1} and \ref{Sec.DBC2}, respectively.   Finally,  the paper is concluded by some discussions and future works.

\textbf{Notation.} 
For $x \in \mathbb{R}^n$, $(a,b) \in \mathbb{R} \times \mathbb{R}$ with $a<b$, and a function $z : [a,b] \rightarrow \mathbb{R}$,  we let $|x| := \sqrt{x^\top x}$, $|| z || := \sqrt{\int^b_a |z(x)|^2 dx}$, $z_x := \frac{\partial z}{\partial x}$, $z_{xx} := \frac{\partial^2 z}{\partial x^2}$, $z_{xxx} := \frac{\partial^3 z}{\partial x^3}$, and $z_{xxxx} := \frac{\partial^4 z}{\partial x^4}$. Furthermore, we say that $z \in \mathcal{L}^2(a,b)$ if $|| z ||$ is finite, $z \in H^1(a,b)$ if $ ||z|| + || z_x ||$ is finite and $z \in H^2(a,b)$ if $|| z || + || z_{x} || + || z_{xx} ||$ is finite.  Moreover, we say that $z \in H^1_{o}(a,b)$ if $z \in H^1(a,b)$ and $z(a) = z (b) = 0$, and we say that $z \in H^2_{o}(a,b)$ if $z \in H^2(a,b)$ and $z(a) = z(b) = z_{x}(a) = z_{x} (b) = 0$.  For a matrix $M \in \mathbb{R}^{n \times n}$, $\det M$  denotes the determinant of the matrix $M$.  To avoid heavy notations, for a function $u : [a,b] \times \mathbb{R}_{\geq 0} \rightarrow \mathbb{R}$,  the time dependence is implicit when we write $u(x)$.  In other words,  we use $u(x)$, to express $u(x,t)$.  Moreover, we use $\dom u$ to denote the domain of definition of the map $t \mapsto u(x,t)$ for all $x\in [a,b]$.  Finally,  $\kappa : \mathbb{R}_{\geq 0} \rightarrow \mathbb{R}_{\geq 0}$ is a class $\mathcal{K}$ function if it is continuous,  increasing,  and $\kappa(0) = 0$.
  
\section{Problem Formulation} \label{SecProbForm}
Consider the nonlinear Kuramoto-Sivashinsky equation given by 
\begin{align*}
\Sigma_1 : u_t  =  -u u_x - \lambda_1 u_{xx} - u_{xxxx} \qquad x \in [0,L],
\end{align*}
where $L>0$ and $\lambda_1 \geq 0$ are constant coefficients assumed to be known.    The boundary conditions of $\Sigma_1$ will be eventually specified.  

\ifitsdraft
Note that some of the boundary variables are control inputs to be designed   to stabilize the trivial solution to $\Sigma_1$ in the 
$\mathcal{L}^2$ sense. 
\fi
 
\subsection{Intermittent Sensing}  \label{Sec.sens}

For some $Y \in (0,L)$, we measure the state $u$ on the spatial domain $[0,Y]$ for some interval of time and, then, we measure $u$ on the domain $[Y,L]$ along another time interval.   In particular,   we do not measure $u$ along the whole line $[0,L]$ simultaneously.   The  proposed sensing scenario has the following  motivations.
 
 In network control systems \cite{zhang2001stability}.  Assume that we dispose of two sensors, the first one measures $u([0,Y])$ and the second one measures  $u([Y,L])$.   The two sensors share the same channel when sending their data to the controller.  Hence, $u([0,Y])$ is available to the controller only for some interval of time and $u([Y,L])$ only for another interval of time.  The same reasoning can be extended if a network of sensors share the same communication channel with the controller \cite{sun2009networked}.  Also,  when using mobile or scanning sensors; see  \cite{zhang2019fuzzy} and  \cite{khapalov1992observability}, respectively.  Assume that we dispose of one mobile sensor measuring the state $u([0,Y])$ for some interval of time, then, it changes location to measure $u([Y,L])$ for another interval of time. Note that, here, we are neglecting the dynamics of the mobile sensor compared to the dynamics of $\Sigma_1$.  
 This reasoning can be extended to a network of mobile sensors as in \cite{mu2014improving,5406054,ZHANG20201299}.

Strictly speaking,  we assume the existence of a  sequence of times $\{t_i\}^{\infty}_{i=1}$, with $t_1 = 0$ and $t_{i+1} > t_i$,  such that 
\begin{itemize}
\item $u([0,Y], t)$ is available  for all $t \in \bigcup^{\infty}_{k = 1} [t_{2k-1},t_{2k})$.
\item $u([Y,L], t)$ is available for all 
$t \in \bigcup^{\infty}_{k = 1} [t_{2k},t_{2k+1})$.
\end{itemize}
Next,  we consider the following assumption:
\begin{assumption} \label{ass1}
There exist $\bar{T}_1$,  $\bar{T}_2>0$ such that, for each $k \in \{1,2,... \}$,  $ t_{2k} - t_{2k-1} = \bar{T}_1 $ and $t_{2k+1} - t_{2k} = \bar{T}_2$. 
\end{assumption}

\begin{remark} \label{rem3}
The proposed sensing scenario can be generalized if we decompose the interval $[0,L]$ into many  sub-intervals instead of only two.  In this case, we define an increasing sequence $\{Y_i\}^{N}_{i=1}$, with $Y_1 = 0$ and $Y_N = L$ such that, for each $i \in \{1,2,...,N\}$, $u([Y_i,Y_{i+1}], t)$ is available to the controller for all 
$t \in \bigcup^{\infty}_{k = 1} \left[ t_{(N-1)(k-1) + i},t_{(N-1)(k-1) + i + 1} \right)$.
\end{remark}

\begin{remark} \label{rem4}
Note that most of existing results on boundary stabilization of 
$\Sigma_1$ assume point-wise measurements; namely,  the state $u$  is measured at the isolated points where the control is effective;  see for example \cite{Toshidaref,  liu2001stability,  sakthivel2007non, guzman2019stabilization,  katz2020finite}.  However, in the aforementioned results either $\lambda_1$ is assumed to be sufficiently small or the linear version of $\Sigma_1$ is considered.  To the best of our knowledge,  boundary 
control of the nonlinear Kuramoto-Sivashinski equation without restricting the size of $\lambda_1$ is 
studied only in \cite{coron2015fredholm},  where the availability of $u$  on the whole line $[0,L]$ and for all time is assumed.  
\end{remark}

\subsection{Boundary Control}

We propose to control $\Sigma_1$ at three different locations; 
namely,  at $x = 0$,  $x = Y$, and  $x = L$.   
As a result,  we view $\Sigma_1$ as a system of two PDEs interconnected by a boundary constraint.  That is,   we consider the system 
\begin{align*} 
\Sigma_2:
\left\{
\begin{matrix}
w_t =  -w w_x - \lambda_1 w_{xx} - w_{xxxx} \qquad x \in [0,Y] 
\\
v_t =   -v v_x - \lambda_1 v_{xx} - v_{xxxx} \qquad x \in [Y,L]
\end{matrix}
\right.
\end{align*}
under the boundary constraints
\begin{equation}
\label{eqBC1}
\begin{aligned} 
 w(Y) = v(Y) ~ \text{and} ~  w_x(Y) = v_x(Y).
\end{aligned}
\end{equation}
Condition \eqref{eqBC1} guarantees that $w$, defined on $[0,Y]$, is a continuously differentiable extension of $v$, defined on $[Y,L]$, and vice-versa.  

Furthermore,  each equation in $\Sigma_2$ allows for four boundary conditions, counting the two conditions in \eqref{eqBC1}, we conclude that we are allowed to control $\Sigma_2$ by imposing six other boundary conditions,  which are given by 
\begin{equation}
\label{eqDerich}  
\begin{aligned} 
v_x(L)  = 0, ~  w_x(Y) = 0,~w_x(0) = 0,  ~
w(0)  = u_1,~w(Y) = u_2,~v(L) = u_3, 
\end{aligned}
\end{equation}
where $(u_1,u_2,u_3)$ are control inputs to be designed.  

Next, we specify the  set of solutions to $\Sigma_2$.

\begin{definition} \label{Def1-}  
A pair $\left( (w,v),  \{u_i\}^3_{i=1} \right)$, with $w : [0,Y] \times \dom w  \rightarrow \mathbb{R}$, $v : [Y,L] \times \dom v \rightarrow \mathbb{R}$, and $u_i : \dom u_i \rightarrow \mathbb{R}$, is a solution pair to  $\Sigma_2$ if $\dom w = \dom v = \dom u_1 = \dom u_2 = \dom u_3$,  $ (w(\cdot,t),v(\cdot,t)) \in H^4(0,Y) \times H^4(Y,L)$,  $(w(x,\cdot),v(x,\cdot))$ is locally absolutely continuous,   \eqref{eqBC1}-\eqref{eqDerich} hold, and   
\begin{align*}
\frac{\partial w}{\partial t} (x,t) & =  -w(x,t) w_x(x,t) - \lambda_1 w_{xx}(x,t) - w_{xxxx}(x,t)  
\\ & \qquad  \text{for almost all}~ (x,t) \in [0,Y] \times \dom w,
\\
\frac{\partial v}{\partial t}(x,t) & =   -v(x,t) v_x(x,t) - \lambda_1 v_{xx}(x,t) - v_{xxxx}(x,t) 
\\ & \qquad   \text{for almost all}~ (x,t) \in [Y,L] \times  \dom v.
\end{align*}
\end{definition}

As a consequence,  we study $\Sigma_1$ under
\begin{equation} \label{eqDerich1}
\begin{aligned} 
(u_x(L),u_x(Y), u_x(0) )  = (0,0,0), \quad 
 (u(0),u(Y),u(L))   =   (u_1,u_2,u_3)
\end{aligned}
\end{equation}
by studying $\Sigma_2$ under \eqref{eqBC1}-\eqref{eqDerich}.  
Hence,  we specify the set of solutions to $\Sigma_1$ as follows:

\begin{definition}  \label{Def1}  
A pair $\left(u,  \{u_i\}^3_{i=1} \right)$,  with $u : [0,L] \times \dom u \rightarrow \mathbb{R}$ and $u_i : \dom u_i \rightarrow \mathbb{R}$,
 is a solution pair to  $\Sigma_1$ if there exists $(w,v)$ such that $\left( (w,v),  \{u_i\}^3_{i=1} \right)$ is a solution pair to $\Sigma_2$,  $\dom u = \dom w = \dom v$,   $u(x,t)  = w(x,t)$ for all $(x,t) \in [0,Y] \times \dom u$,  and $u(x,t) = v(x,t)$  for all $(x,t) \in [Y,L] \times \dom u$.  
\end{definition}

\begin{remark}
In \cite{Toshidaref}, $\Sigma_1$ is considered under the boundary conditions   $u_x(L) =  u_x(0) = 0$,  $u_{xxx}(0) = u_1$,  and $u_{xxx}(L) = u_2$.  Furthermore,  in \cite{coron2015fredholm}, the boundary conditions $u(L) =  u(0) = 0$,  $u_{xx}(0) = 0$,  $u_{xx}(L) = u_2$ are considered.   Compared to the latter two references,  in our work, we use an additional control action at $x = Y$ to handle the intermittent  availability  of the measurements. 
\end{remark}
In the sequel,  we  design $(u_1,u_2,u_3)$ for 
$\Sigma_1$,  by following two  scenarios. First,  we set $u_2 = 0$ and we design $u_1$ and $u_3$.  Second,  we set $u_3 = 0$ and we design $u_1$ and $u_2$.

\ifitsdraft
Recall that,  due to the considered sensing and control scenarios,  we design the inputs using $u([0,Y]) = w([0,Y])$ only, when $ t \in I_1 := \bigcup^{\infty}_{k = 1} [t_{2k-1},t_{2k})$. 
Similarly, we design the inputs using $u([Y,L]) = v([Y,L])$ only when $t \in I_2 := \bigcup^{\infty}_{k = 1} [t_{2k},t_{2k+1})$.
\fi

\section{General Approach} \label{secGenApp}
 
Before designing the control inputs,  we consider the following two Lyapunov function candidates:
\begin{equation} \label{eq.4}
\begin{aligned} 
V_1(w)  :=  \frac{1}{2} \int^{Y}_{0} w(x)^2 dx, 
\quad 
V_2(v)  :=  \frac{1}{2}\int^{L}_{Y} v(x)^2 dx.
\end{aligned}    
\end{equation}

\begin{lemma} \label{lemintegpart}
Along the solutions to $\Sigma_2$ and under \eqref{eqBC1} and \eqref{eqDerich}, we have 
\begin{equation}
\label{eqLyapgen}
\begin{aligned}
\dot{V}_1 = & - \int^{Y}_{0} w_{xx}(x)^2 dx  + 
 \lambda_1 \int^{Y}_{0} w_x(x)^2 dx  -  \frac{u_2^3 - 
u_1^3}{3} - u_2 w_{xxx}(Y)  + u_1 w_{xxx}(0), 
\\
\dot{V}_2 = &  - \int^{L}_{Y} v_{xx}(x)^2 dx  
+  \lambda_1 \int^{L}_{Y} v_x(x)^2 dx  -  \frac{u_3^3 - u_2^3}{3} - u_3 v_{xxx}(L)  + u_2 v_{xxx}(Y).  
\end{aligned}
\end{equation}
\end{lemma}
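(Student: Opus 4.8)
The plan is to differentiate each $V_i$ directly, substitute the corresponding PDE from $\Sigma_2$, and then reduce every resulting term to an integral of a squared derivative plus boundary contributions via repeated integration by parts. I will treat $V_1$ in detail; the computation for $V_2$ is identical after transferring the boundary data at $x=Y$ through \eqref{eqBC1}.

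First I would write, using the regularity in Definition \ref{Def1-}, $\dot{V}_1 = \int_0^Y w(x) w_t(x)\,dx$, and replace $w_t$ by $-w w_x - \lambda_1 w_{xx} - w_{xxxx}$, splitting $\dot{V}_1$ into the three pieces $-\int_0^Y w^2 w_x\,dx$, $-\lambda_1\int_0^Y w w_{xx}\,dx$, and $-\int_0^Y w w_{xxxx}\,dx$. For the first, I would use $w^2 w_x = \tfrac{1}{3}(w^3)_x$, so that $-\int_0^Y w^2 w_x\,dx = -\tfrac{1}{3}\left(w(Y)^3 - w(0)^3\right) = -\tfrac{1}{3}(u_2^3 - u_1^3)$ after inserting $w(0) = u_1$ and $w(Y) = u_2$ from \eqref{eqDerich}. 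For the second, one integration by parts gives $-\lambda_1\int_0^Y w w_{xx}\,dx = -\lambda_1 [w w_x]_0^Y + \lambda_1 \int_0^Y w_x^2\,dx$, and the boundary term vanishes because $w_x(0) = w_x(Y) = 0$.

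The only term requiring care is the fourth-order term, which I would integrate by parts twice: $-\int_0^Y w w_{xxxx}\,dx = -[w w_{xxx}]_0^Y + \int_0^Y w_x w_{xxx}\,dx = -[w w_{xxx}]_0^Y + [w_x w_{xx}]_0^Y - \int_0^Y w_{xx}^2\,dx$. Here the second boundary bracket $[w_x w_{xx}]_0^Y$ drops out, again because $w_x$ vanishes at both endpoints, while the first contributes $-u_2 w_{xxx}(Y) + u_1 w_{xxx}(0)$. Summing the three pieces reproduces the stated expression for $\dot{V}_1$. Repeating the argument verbatim on $[Y,L]$, and using $v(Y) = w(Y) = u_2$ and $v_x(Y) = w_x(Y) = 0$ from \eqref{eqBC1} together with $v_x(L) = 0$ and $v(L) = u_3$ from \eqref{eqDerich}, yields the formula for $\dot{V}_2$.

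The computation itself is routine; the one point that genuinely needs justification is the interchange $\frac{d}{dt}\int_0^Y \tfrac{1}{2} w^2\,dx = \int_0^Y w w_t\,dx$ and the well-definedness of the pointwise traces $w_{xxx}(0)$ and $w_{xxx}(Y)$, both of which I would invoke directly from the solution regularity imposed in Definition \ref{Def1-}, namely $w(\cdot,t) \in H^4(0,Y)$ with $t \mapsto w(x,t)$ locally absolutely continuous.
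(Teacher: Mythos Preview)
Your proof is correct and follows essentially the same route as the paper: differentiate under the integral, substitute the PDE, and reduce each of the three resulting terms via integration by parts, using $w_x(0)=w_x(Y)=0$ to kill the $[w w_x]_0^Y$ and $[w_x w_{xx}]_0^Y$ boundary brackets. Your additional remark on the regularity needed to justify the differentiation and the traces of $w_{xxx}$ is a welcome extra that the paper's proof leaves implicit.
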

Next,  we introduce a key result that allows us to upper bound   the terms $\left[- \int^{Y}_{0} w_{xx}(x)^2 dx  + 
 \lambda_1 \int^{Y}_{0} w_x(x)^2 dx \right]$ and $\left[- \int^{L}_{Y} v_{xx}(x)^2 dx  +  \lambda_1 \int^{L}_{Y} v_x(x)^2 dx \right]$ in \eqref{eqLyapgen} using $V_1$, $V_2$, and the inputs $(u_1,u_2,u_3)$.  To this end,  we introduce the following eigenvalue problem.
\begin{problem} \label{prob1}
Given $\lambda \geq 0$,  find the smallest $\delta \in \mathbb{R}$, denoted $\delta_o$, such that
\begin{equation}
\label{eigenprob}
\begin{aligned}
z_{xxxx} & + \lambda z_{xx} = \delta z \qquad x \in [a,b]
\end{aligned}
\end{equation}
admits a nontrivial solution 
$z : [a,b] \rightarrow \mathbb{R}$ in $H^2(a,b)$ satisfying 
\begin{align} \label{eqBC}
z(a) = z(b) = z_x(a) = z_x(b) = 0.
\end{align}
\end{problem}
The following result can be found  in \cite[Lemma 2.1, Theorem 2.1, and Remark 1]{liu2001stability}.
\begin{lemma} \label{lem3}
Given $\lambda \geq 0$, we  let $\delta_o \in \mathbb{R}$ be the corresponding solution to Problem  \ref{prob1}. 
Then,  if $\lambda < 4 \pi^2$ then $\delta_o > 0$.  Furthermore,  if $\lambda > 4 \pi^2$ then $\delta_o < 0$.  Finally,  if $\lambda = 4 \pi^2$ then $\delta_o = 0$.
\end{lemma}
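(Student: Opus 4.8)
The plan is to characterize $\delta_o$ variationally and then reduce the sign question to locating a single critical value of $\lambda$. The operator $z \mapsto z_{xxxx} + \lambda z_{xx}$ under the clamped conditions \eqref{eqBC} is self-adjoint and bounded below with compact resolvent on $\mathcal{L}^2(a,b)$, so its point spectrum is exactly the set of admissible $\delta$ in Problem \ref{prob1}; it is a discrete sequence bounded below, and the smallest element $\delta_o$ is given by the Rayleigh quotient
\[
\delta_o = \min_{z \in H^2_o(a,b),\, z \neq 0} \frac{\int_a^b z_{xx}(x)^2\, dx - \lambda \int_a^b z_x(x)^2\, dx}{\int_a^b z(x)^2\, dx},
\]
the minimum being attained. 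I would obtain this by multiplying \eqref{eigenprob} by $z$, integrating over $[a,b]$, and integrating by parts twice, using \eqref{eqBC} to annihilate every boundary term: this turns $\int_a^b z\,z_{xxxx}$ into $\int_a^b z_{xx}^2$ and $\int_a^b z\,z_{xx}$ into $-\int_a^b z_x^2$.

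Next I would track the dependence of $\delta_o$ on $\lambda$ through this quotient. For each fixed admissible $z$ the numerator $\int_a^b z_{xx}^2 - \lambda \int_a^b z_x^2$ is affine and strictly decreasing in $\lambda$, because admissibility forces $\int_a^b z_x^2 > 0$ (if $z_x \equiv 0$ then $z$ is constant, and \eqref{eqBC} gives $z \equiv 0$). Hence $\delta_o(\lambda)$, being the pointwise minimum over the unit sphere $\{\|z\|=1\}$ of a family of strictly decreasing affine functions, is concave, continuous, and strictly decreasing in $\lambda$. Moreover $\delta_o(0) > 0$ (at $\lambda=0$ the minimizer $z^\ast \neq 0$ has $\int_a^b (z^\ast)_{xx}^2 > 0$, since $z_{xx}\equiv 0$ with \eqref{eqBC} would force $z\equiv 0$), while $\delta_o(\lambda) \to -\infty$ as $\lambda \to \infty$ (test with any fixed admissible $z$). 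By the intermediate value theorem there is a unique $\lambda^\ast$ with $\delta_o(\lambda^\ast)=0$, and strict monotonicity then yields $\delta_o>0$ for $\lambda<\lambda^\ast$ and $\delta_o<0$ for $\lambda>\lambda^\ast$. It remains to identify $\lambda^\ast$.

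Finally I would compute $\lambda^\ast$ explicitly. The identity $\delta_o(\lambda^\ast)=0$ says precisely that $0$ is the smallest eigenvalue at $\lambda=\lambda^\ast$, i.e.\ $\lambda^\ast$ is the smallest $\lambda$ for which the constant-coefficient ODE $z_{xxxx}+\lambda z_{xx}=0$ admits a nontrivial solution obeying \eqref{eqBC}. Its general solution is $z(x)=A+Bx+C\cos(\omega x)+D\sin(\omega x)$ with $\omega=\sqrt{\lambda}$; imposing the four conditions \eqref{eqBC} gives a homogeneous $4\times 4$ linear system in $(A,B,C,D)$, and a nontrivial solution exists iff its determinant vanishes. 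After eliminating $A,B$ the condition reduces (taking the normalized interval $b-a=1$) to
\[
2 - 2\cos\omega - \omega\sin\omega = 0, \qquad \text{equivalently} \qquad \sin(\omega/2)\,\bigl[\,2\sin(\omega/2) - \omega\cos(\omega/2)\,\bigr] = 0.
\]
The first factor gives $\omega \in \{2\pi, 4\pi, \dots\}$, while the second gives $\tan(\omega/2)=\omega/2$, whose smallest positive root exceeds $2\pi$; hence the smallest admissible frequency is $\omega=2\pi$, i.e.\ $\lambda^\ast = 4\pi^2$, with eigenfunction $z(x)=\cos(2\pi x)-1$.

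I expect the delicate step to be this last one: reducing the $4\times 4$ determinant to the clean transcendental form and then verifying that the minimal positive root is furnished by the branch $\sin(\omega/2)=0$ rather than by $\tan(\omega/2)=\omega/2$. One must also confirm that $0$ does not enter the spectrum as a \emph{higher} eigenvalue for $\lambda<\lambda^\ast$, but this is automatic, since $\delta_o(\lambda)>0$ already places the entire spectrum strictly above $0$ there.
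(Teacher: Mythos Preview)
The paper does not give its own proof of this lemma; it simply cites Liu and Krsti\'c, \emph{Stability enhancement by boundary control in the Kuramoto--Sivashinsky equation} (Lemma~2.1, Theorem~2.1, and Remark~1 therein). So there is no in-paper argument to compare against.

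Your proof is correct and self-contained. The variational characterization of $\delta_o$ is the right starting point, and your strict-monotonicity argument is clean: if $z_1$ attains the minimum at $\lambda_1<\lambda_2$, then $\delta_o(\lambda_1)=R(z_1,\lambda_1)>R(z_1,\lambda_2)\ge\delta_o(\lambda_2)$, where the strict inequality uses $\int z_{1,x}^2>0$. The reduction to locating the smallest $\lambda$ at which $0$ enters the spectrum is justified exactly as you note, and the determinant computation for $z_{xxxx}+\lambda z_{xx}=0$ under the clamped conditions, leading to $2-2\cos\omega-\omega\sin\omega=0$ and its factorization, is correct; the smallest positive root of $\tan(u)=u$ indeed lies in $(\pi,3\pi/2)$, well above $\pi$, so the $\sin(\omega/2)=0$ branch wins and $\lambda^\ast=4\pi^2$ on the unit interval. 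One caveat worth stating explicitly for the reader: the threshold $4\pi^2$ presupposes $b-a=1$ (on a general interval it rescales to $4\pi^2/(b-a)^2$); the paper uses the result in this normalized form, as seen in Lemma~\ref{lemlowbound}.
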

The following lemma can be found in  \cite[Lemma 3.1]{liu2001stability}.
\begin{lemma} \label{lem4-}
Given $\lambda \geq 0$ and $z \in H^2_{o}(a,b)$.   Let $\delta_o \in \mathbb{R}$ be the solution to Problem  \ref{prob1}.  
Then,  
\begin{align} \label{eqlem1-}
\hspace{-0.2cm} - \int^{b}_{a} z_{xx}(x)^2 dx   + \lambda  \int^{b}_{a} z_x(x)^2 dx  \leq  
-\delta_o \int^{b}_{a} z(x)^2 dx.
\end{align}
\end{lemma}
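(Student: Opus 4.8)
The plan is to recognize the inequality \eqref{eqlem1-} as the Rayleigh--Ritz (variational) characterization of the smallest eigenvalue $\delta_o$ of the operator appearing in Problem \ref{prob1}, and to reduce the claim to the single statement that $\delta_o$ equals the minimum of the associated Rayleigh quotient over $H^2_o(a,b)$.

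First I would set up the quadratic form by integration by parts. For $z \in H^2_o(a,b)$, integrating twice and using the clamped boundary conditions \eqref{eqBC}, i.e. $z(a)=z(b)=z_x(a)=z_x(b)=0$, gives
$$\int_a^b z\, z_{xxxx}\, dx = \int_a^b z_{xx}(x)^2\, dx, \qquad \int_a^b z\, z_{xx}\, dx = -\int_a^b z_x(x)^2\, dx,$$
every boundary contribution vanishing because of \eqref{eqBC}. Multiplying the eigenvalue equation \eqref{eigenprob} by $z$ and integrating over $[a,b]$ therefore shows that any eigenpair $(z,\delta)$ satisfies
$$\int_a^b z_{xx}(x)^2\, dx - \lambda \int_a^b z_x(x)^2\, dx = \delta \int_a^b z(x)^2\, dx,$$
so each eigenvalue is the value at its own eigenfunction of the Rayleigh quotient $R(z) := \bigl(\int_a^b z_{xx}^2\, dx - \lambda \int_a^b z_x^2\, dx\bigr)\big/\int_a^b z^2\, dx$.

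Next I would argue that $\delta_o = \min_{z \in H^2_o(a,b),\, z \neq 0} R(z)$. The operator $A z := z_{xxxx} + \lambda z_{xx}$ subject to \eqref{eqBC} is symmetric and bounded below on $\mathcal{L}^2(a,b)$, and since $H^2_o(a,b)$ embeds compactly into $\mathcal{L}^2(a,b)$ it has compact resolvent; hence its spectrum is a discrete, bounded-below sequence of eigenvalues whose least member is exactly the minimum of $R$ over the form domain $H^2_o(a,b)$, and by Problem \ref{prob1} this least eigenvalue is $\delta_o$. Equivalently, one can run the direct method: using the interpolation estimate $\int_a^b z_x^2 \leq \varepsilon \int_a^b z_{xx}^2 + C_\varepsilon \int_a^b z^2$, the functional $z \mapsto \int_a^b z_{xx}^2 - \lambda \int_a^b z_x^2$ is bounded below and coercive in $\|z_{xx}\|$ on the constraint set $\{z \in H^2_o(a,b) : \int_a^b z^2 = 1\}$, and it is weakly lower semicontinuous there, so a minimizer $z_*$ exists; its Euler--Lagrange equation is precisely \eqref{eigenprob} with $\delta = R(z_*)$, and minimality forces $R(z_*) = \delta_o$. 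Either route identifies $\delta_o$ as the infimum of $R$.

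Granting this, the conclusion is immediate. For every nonzero $z \in H^2_o(a,b)$ we have $R(z) \geq \delta_o$, that is,
$$\int_a^b z_{xx}(x)^2\, dx - \lambda \int_a^b z_x(x)^2\, dx \geq \delta_o \int_a^b z(x)^2\, dx,$$
and multiplying by $-1$ yields \eqref{eqlem1-}; the case $z = 0$ is trivial. The main obstacle is the middle step --- justifying that the smallest eigenvalue furnished by Problem \ref{prob1} really is the global minimum of the Rayleigh quotient rather than merely a critical value. This is where the spectral theory of the self-adjoint operator $A$ (or the compactness and coercivity needed for the direct method) does the essential work, while the integration-by-parts identities and the final comparison are routine.
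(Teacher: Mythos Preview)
Your argument is correct. The paper does not actually supply a proof of this lemma; it simply quotes the result from \cite[Lemma~3.1]{liu2001stability}. Your route---recognize \eqref{eqlem1-} as the Rayleigh--Ritz variational characterization of the bottom eigenvalue of $A z = z_{xxxx}+\lambda z_{xx}$ under the clamped conditions \eqref{eqBC}, then invoke either compact-resolvent spectral theory or the direct method to identify $\delta_o$ with $\min_{z\in H^2_o,\,z\neq 0} R(z)$---is exactly the standard proof one would find behind that citation, so there is nothing to compare.

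One small presentational point: when you write the identity $\int_a^b z\,z_{xxxx}\,dx = \int_a^b z_{xx}^2\,dx$ ``for $z\in H^2_o(a,b)$'', the left-hand side is not defined at that regularity. You only need this identity for eigenfunctions of \eqref{eigenprob}, which by elliptic regularity lie in $H^4(a,b)$, so the computation is legitimate; just make clear that the integration-by-parts step is performed on eigenfunctions (to show each eigenvalue is a critical value of $R$), while the final inequality $R(z)\ge\delta_o$ is the statement that holds for all $z\in H^2_o(a,b)$.
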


Now, we propose a generalization of Lemma \ref{lem4-}.

\begin{lemma} \label{lem4}
Given $\lambda_1 > 0$  and $z \in H^2(a,b)$ with $z_x(a) = z_x(b) = 0$.    Let $\delta_o$ be the solution to Problem \ref{prob1} with $\lambda := 3 \lambda_1$.  
Then,   for each $\delta \leq \delta_o$,  we have 
\begin{equation} 
\label{eqlem1}
\begin{aligned} 
& - \int^{b}_{a} z_{xx}(x)^2 dx   + \lambda_1 \int^{b}_{a} z_x(x)^2 dx  
\leq   \delta_1 \int^{b}_{a} z(x)^2 dx 
\\ &
+  C_{z1} (z(a),z(b)) + \delta_2  C_{z2}(z(a),z(b)) + \lambda_1 C_{z3}(z(a),z(b)),
\end{aligned}
\end{equation}
where $\delta_1 := \left(|\delta| - 2 \delta \right)/ 3$,  $\delta_2 := \left(4 |\delta| - 2 \delta \right) /3$,
\begin{equation}
\label{eqCs}
\begin{aligned}
C_{z1}(\cdot)  :=  2 \int^{b}_{a} \kappa_{xx}(x)^2 dx,  ~
C_{z2}(\cdot)  := \int^{b}_{a} \kappa(x)^2 dx, ~
C_{z3}(\cdot)  := 2   \int^{b}_{a} \kappa_{x}(x)^2 dx,
\end{aligned}
\end{equation}
\begin{equation} \label{eqomega}
\begin{aligned} 
\kappa(x) & := z(a)-2[z(b) - z(a)] \left( \frac{x-a}{b} \right)^3 
 + 3 [z(b) - z(a)] \left( \frac{x-a}{b} \right)^2.
\end{aligned}
\end{equation}
\end{lemma}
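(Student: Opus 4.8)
The plan is to reduce to the homogeneous boundary conditions already covered by Lemma \ref{lem4-}, by subtracting the boundary-matching cubic $\kappa$ of \eqref{eqomega}, and then to absorb the resulting cross terms with Young's inequality using constants tuned so that the choice $\lambda=3\lambda_1$ makes the first-derivative integrals cancel and the leftover coefficients collapse to $\delta_1$ and $\delta_2$.

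First I would set $\tilde z := z - \kappa$. The cubic $\kappa$ in \eqref{eqomega} is built so that $\kappa(a)=z(a)$, $\kappa(b)=z(b)$ and $\kappa_x(a)=\kappa_x(b)=0$; together with the hypothesis $z_x(a)=z_x(b)=0$ this yields $\tilde z(a)=\tilde z(b)=\tilde z_x(a)=\tilde z_x(b)=0$, so that $\tilde z\in H^2_o(a,b)$. Applying Lemma \ref{lem4-} to $\tilde z$ with $\lambda := 3\lambda_1$ and using $-\delta_o\le-\delta$ (valid since $\delta\le\delta_o$) gives
\[
-\int^{b}_{a}\tilde z_{xx}(x)^2\,dx + 3\lambda_1\int^{b}_{a}\tilde z_x(x)^2\,dx \;\le\; -\delta\int^{b}_{a}\tilde z(x)^2\,dx. \tag{$\star$}
\]

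Next I would split the two quadratic integrals of $z$ into a homogeneous part and a $\kappa$-remainder. From $z_{xx}=\tilde z_{xx}+\kappa_{xx}$ and the weighted Young inequality $2|\tilde z_{xx}\kappa_{xx}|\le \tfrac13\tilde z_{xx}^2+3\kappa_{xx}^2$ one gets $-\int^{b}_{a} z_{xx}^2\le -\tfrac23\int^{b}_{a}\tilde z_{xx}^2+2\int^{b}_{a}\kappa_{xx}^2$, while $(p+q)^2\le 2p^2+2q^2$ applied to $z_x=\tilde z_x+\kappa_x$ gives $\lambda_1\int^{b}_{a} z_x^2\le 2\lambda_1\int^{b}_{a}\tilde z_x^2+2\lambda_1\int^{b}_{a}\kappa_x^2$. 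Adding these and inserting $(\star)$ multiplied by $\tfrac23$ cancels the $\int^{b}_{a}\tilde z_x^2$ integrals exactly --- this cancellation is precisely what forces $\lambda=3\lambda_1$ --- and leaves
\[
-\int^{b}_{a} z_{xx}(x)^2\,dx + \lambda_1\int^{b}_{a} z_x(x)^2\,dx \;\le\; -\tfrac23\delta\int^{b}_{a}\tilde z(x)^2\,dx + 2\int^{b}_{a}\kappa_{xx}(x)^2\,dx + 2\lambda_1\int^{b}_{a}\kappa_x(x)^2\,dx,
\]
whose last two terms are already $C_{z1}$ and $\lambda_1 C_{z3}$ from \eqref{eqCs}.

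Finally I would pass from $\tilde z$ back to $z$ in the surviving term by expanding $\int^{b}_{a}\tilde z^2=\int^{b}_{a} z^2-2\int^{b}_{a} z\kappa+\int^{b}_{a}\kappa^2$ and bounding the cross term through $2|z\kappa|\le\tfrac12 z^2+2\kappa^2$. Tracking the sign of $\delta$ via $|\delta|$, the coefficient of $\int^{b}_{a} z^2$ becomes $-\tfrac23\delta+\tfrac13|\delta|=(|\delta|-2\delta)/3=\delta_1$ and that of $\int^{b}_{a}\kappa^2$ becomes $-\tfrac23\delta+\tfrac43|\delta|=(4|\delta|-2\delta)/3=\delta_2$, which delivers \eqref{eqlem1} with $C_{z2}=\int^{b}_{a}\kappa^2$. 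I expect the only real obstacle to be the simultaneous calibration of the Young constants: the pair $\big(\tfrac23,\ \eta=\tfrac12\big)$ is the unique choice that both cancels the $\int^{b}_{a}\tilde z_x^2$ terms against the factor $3\lambda_1$ and makes the final coefficients equal $\delta_1$ and $\delta_2$ for either sign of $\delta$; verifying that $\kappa$ in \eqref{eqomega} indeed interpolates $z$ with vanishing slope at the endpoints is routine.
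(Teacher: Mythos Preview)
Your proof is correct and follows essentially the same route as the paper: subtract the cubic interpolant $\kappa$ so that $\tilde z=z-\kappa\in H^2_o(a,b)$, apply Lemma~\ref{lem4-} with $\lambda=3\lambda_1$, and use Young's inequality with the same calibration to recover the coefficients $\delta_1,\delta_2$ and the constants $C_{z1},C_{z2},C_{z3}$. The only cosmetic difference is the order of operations: the paper lower-bounds the $(z-\kappa)$-integrals on the left of $(\star)$ in terms of the $z$- and $\kappa$-integrals (e.g.\ $-(z_{xx}-\kappa_{xx})^2\ge-\tfrac32 z_{xx}^2-3\kappa_{xx}^2$) and divides by $3/2$ at the end, whereas you upper-bound the $z$-integrals in terms of the $\tilde z$- and $\kappa$-integrals and then insert $(\star)$ multiplied by $2/3$; the two are algebraically equivalent.
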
 

\ifitsdraft
\begin{remark}
A simple approach to explicitly compute $\delta$, a lower bound of 
$\delta_o$,  and to numerically compute $\delta_o$ are described in Lemma \ref{lemlowbound}.
\end{remark}
\else
\begin{remark}
We can explicitly compute $\delta$,  a lower bound of 
$\delta_o$,  and numerically approach $\delta_o$.  This is omitted due to space limitation.  
\end{remark}
\fi

\begin{remark} \label{rem1}
According to \eqref{eqCs} and \eqref{eqomega},  we conclude that, for each  $i \in \{1,2,3\}$, 
$C_{zi}(z(a),z(b))  := a_{zi} z(a)^2 + b_{zi} z(b)^2 + c_{zi} z(a) z(b), $
where $(a_{zi},b_{zi},c_{zi})$  are constants obtained by integrating  the  polynomials $\kappa^2$, $\kappa_{x}^2$, and $\kappa_{xx}^2$ on the interval $[a,b]$.  It is also important to note that  the parameters $(a_{zi},b_{zi},c_{zi})$ depend only on the domain of $z$,  which is the interval $[a,b]$.
\end{remark}

At this point,  using Lemma \ref{lem4},  we translate the analysis of 
$\Sigma_2$,  which is an infinite-dimensional system,  into the analysis of a finite-dimensional system of differential inequalities.   

\begin{lemma} \label{lemfd}
Along the solutions to $\Sigma_2$ and under \eqref{eqBC1} and \eqref{eqDerich}, we have 
\begin{equation}
\label{eqLyapgenbis1}
\begin{aligned}
\dot{V}_1 & \leq  2\delta_1 V_1 + C_{w1} (u_1,u_2) + \delta_2  C_{w2}(u_1,u_2) \\ & + \lambda_1  C_{w3}(u_1,u_2)
  -  \frac{u_2^3 - u_1^3}{3} - u_2 w_{xxx}(Y) + u_1 w_{xxx}(0), 
\\ \dot{V}_2 & \leq  2 \delta_1 V_2 + C_{v1} (u_2,u_3) + \delta_2  C_{v2}(u_2,u_3) \\ & + \lambda_1  C_{v3}(u_2,u_3)
 -  \frac{u_3^3 - u_2^3}{3} - u_3 v_{xxx}(L)   + u_2 v_{xxx}(Y), 
\end{aligned}
\end{equation}
where $(\delta_1,\delta_2)$ are given in Lemma \ref{lem4}, and $\{C_{wi} \}^3_{i=1}$ and $\{C_{vi} \}^3_{i=1}$ are obtained as in Lemma \ref{lem4} while substituting $(a,b,z)$ therein by $(0,Y,w)$ and $(Y,L,v)$, respectively. 
\end{lemma}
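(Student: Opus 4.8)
The plan is to obtain \eqref{eqLyapgenbis1} by simply combining Lemma \ref{lemintegpart} with Lemma \ref{lem4}, applied separately to each of the two sub-domains. Lemma \ref{lemintegpart} already provides exact expressions for $\dot V_1$ and $\dot V_2$ along the solutions to $\Sigma_2$; in each of these expressions the only terms that are not already in the desired form are the integral pairs $\left[-\int_0^Y w_{xx}^2\,dx+\lambda_1\int_0^Y w_x^2\,dx\right]$ and $\left[-\int_Y^L v_{xx}^2\,dx+\lambda_1\int_Y^L v_x^2\,dx\right]$. So the work reduces to bounding exactly these two quantities by the right-hand side of \eqref{eqlem1}.

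\textbf{Verifying the hypotheses of Lemma \ref{lem4}.} First I would check that $w(\cdot,t)$ and $v(\cdot,t)$ satisfy the assumptions required to invoke Lemma \ref{lem4} on the intervals $[0,Y]$ and $[Y,L]$, respectively. By Definition \ref{Def1-}, $(w(\cdot,t),v(\cdot,t))\in H^4(0,Y)\times H^4(Y,L)\subset H^2(0,Y)\times H^2(Y,L)$, which supplies the required $H^2$ regularity. The boundary conditions \eqref{eqDerich} give $w_x(0)=0$ and $w_x(Y)=0$, so $w(\cdot,t)$ meets the endpoint-derivative hypothesis $z_x(a)=z_x(b)=0$ of Lemma \ref{lem4} with $(a,b)=(0,Y)$. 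Likewise, \eqref{eqDerich} gives $v_x(L)=0$, and the interface condition \eqref{eqBC1} together with $w_x(Y)=0$ yields $v_x(Y)=w_x(Y)=0$, so $v(\cdot,t)$ meets the same hypothesis with $(a,b)=(Y,L)$. Since $\lambda_1>0$ is assumed in Lemma \ref{lem4}, the hypotheses are in place.

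\textbf{Applying the bound and matching boundary values.} I would then apply \eqref{eqlem1} with $z:=w$, $(a,b):=(0,Y)$ (and the choice $\delta:=\delta_o$, i.e.\ the largest admissible value, so that the bound is tightest), obtaining
\begin{equation*}
\begin{aligned}
-\int_0^Y w_{xx}^2\,dx+\lambda_1\int_0^Y w_x^2\,dx
&\leq \delta_1\int_0^Y w^2\,dx
+ C_{w1}(w(0),w(Y))\\
&\quad + \delta_2\,C_{w2}(w(0),w(Y))
+ \lambda_1\,C_{w3}(w(0),w(Y)).
\end{aligned}
\end{equation*}
By \eqref{eqDerich} the endpoint values are $w(0)=u_1$ and $w(Y)=u_2$, so writing $C_{wi}(u_1,u_2)$ for $C_{wi}(w(0),w(Y))$ is just relabeling, consistent with the statement's prescription that $\{C_{wi}\}_{i=1}^3$ come from Lemma \ref{lem4} with $(a,b,z)=(0,Y,w)$. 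Finally, recognizing $\int_0^Y w^2\,dx = 2V_1$ from \eqref{eq.4} converts $\delta_1\int_0^Y w^2\,dx$ into $2\delta_1 V_1$. Substituting this bound for the integral pair in the expression for $\dot V_1$ from Lemma \ref{lemintegpart}, and carrying the remaining cubic and $w_{xxx}$ terms through unchanged, yields the first inequality in \eqref{eqLyapgenbis1}. The second inequality follows identically with $z:=v$, $(a,b):=(Y,L)$, using $v(Y)=w(Y)=u_2$ and $v(L)=u_3$, and $\int_Y^L v^2\,dx=2V_2$.

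The argument is essentially bookkeeping, so there is no deep obstacle; the only point requiring care is the one flagged above, namely confirming $v_x(Y)=0$, which is not listed directly in \eqref{eqDerich} for $v$ but is forced by the interface condition \eqref{eqBC1} combined with $w_x(Y)=0$. This is what legitimizes applying Lemma \ref{lem4} to $v$ on $[Y,L]$ with the clean homogeneous-derivative boundary data, and it is the step I would state explicitly before the computation.
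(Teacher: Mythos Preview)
Your proposal is correct and follows essentially the same approach as the paper's own proof: apply Lemma \ref{lem4} on each sub-interval to bound the integral pairs in the identities of Lemma \ref{lemintegpart}, then rewrite $\delta_1\int w^2$ and $\delta_1\int v^2$ as $2\delta_1 V_1$ and $2\delta_1 V_2$ via \eqref{eq.4}. Your explicit verification that $v_x(Y)=0$ follows from \eqref{eqBC1} together with $w_x(Y)=0$ is a detail the paper leaves implicit, so your write-up is, if anything, more careful on that point.
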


\begin{remark} \label{rem8}
The control action  $u_2 = w(Y) = v(Y)$ will be helpful to handle the boundary terms at $x = Y$ that appear in \eqref{eqLyapgenbis1}.    Without this additional control action, the problem is hard to solve when $\lambda_1 \geq 4 \pi^2$.
\end{remark}

\ifitsdraft
 Our  approach to control $\Sigma_2$  consists in the controlling the system of differential inequalities in \eqref{eqLyapgenbis1} in two steps.   When $t \in I_1 := \bigcup^{\infty}_{k = 1} [t_{2k-1},t_{2k})$,  the inputs are designed, based on the knowledge of the state variable $w$,  to stabilize $V_1$.  However, it is important to guarantee an appropriate behavior for $V_2$ during this time.  The same reasoning applies when $t \in I_2 := \bigcup^{\infty}_{k = 1} [t_{2k},t_{2k+1})$ mutatis mutandis. 
\fi

\section{Main Result 1:  Control at $x = 0$ and $x = L$} \label{Sec.DBC1}

In this section, we let $u_2 = 0$; hence, \eqref{eqLyapgenbis1} becomes
\begin{equation*} 
\begin{aligned}
\dot{V}_1  \leq & ~  2 \delta_1 V_1 + C_{w1}(u_1,0) + \delta_2 C_{w2}(u_1,0) +  \lambda_1 C_{w3}(u_1,0) + u_1 w_{xxx}(0) +  u^3_1/3, 
\\
\dot{V}_2  \leq & ~  2 \delta_1 V_2 + C_{v1}(0,u_3) + \delta_2 C_{v2}(0,u_3)  + \lambda_1 C_{v3}(0,u_3)  - u_3 v_{xxx}(L) - u^3_3/3.  
\end{aligned}
\end{equation*}
Next, using Remark \ref{rem1}, we obtain 
\begin{equation*} 
\begin{aligned}
\dot{V}_1  \leq & ~  2 \delta_1 V_1 + a_{w1} u^2_1 + \delta_2 a_{w2}u^2_1 +  \lambda_1 a_{w3} u^2_1 + u_1 w_{xxx}(0) +  u^3_1/3, 
\\
\dot{V}_2  \leq & ~  2 \delta_1 V_2 + b_{v1} u^2_3 + \delta_2 b_{v2}u^2_3 + \lambda_1  b_{v3}u^2_3  - u_3 v_{xxx}(L) - u^3_3/3.  
\end{aligned}
\end{equation*}

\subsection{Control Design}

\begin{itemize}
\item When $t \in I_1$, we measure $w([0,Y], t)$ and we choose 
$(u_1,u_3)$ so that $u_3  = 0 $ and
\begin{equation}
\label{eqdesign1}
\begin{aligned}
 \hspace{-0.4cm}  \frac{u^3_1}{3} + & (a_{w1}  + \delta_2 a_{w2} + \lambda_1 a_{w3} )  u_1^2  
 + u_1 w_{xxx}(0) 
  \leq  -(\alpha_1 + 2 \delta_1) V_1,
\end{aligned}
\end{equation}
for some $\alpha_1 > 0$.  Hence, we have
\begin{equation}
\label{eqSysI1}
\begin{aligned}
\dot{V}_1  \leq  - \alpha_1 V_1
\quad \text{and} \quad 
\dot{V}_2  \leq  2 \delta_1 V_2.
\end{aligned}
\end{equation}

\item When $t \in I_2$, we measure 
$u([Y,L],t)$ and we choose $(u_2,u_3)$ so that $u_1  = 0$ and
\begin{equation} \label{eqdesign2}
\begin{aligned} 
 \hspace{-0.3cm} - \frac{u^3_3}{3}   + & (b_{v1} + \delta_2 b_{v2}  + \lambda_1 b_{v3}) u_3^2 - u_3 v_{xxx}(L)   \leq  - (\alpha_2 + 2 \delta_1) V_2,
\end{aligned}
\end{equation}
for some $\alpha_2 > 0$.  Hence,  we obtain
\begin{equation}
\label{eqSysI2}
\begin{aligned}
\dot{V}_1  \leq  2 \delta_1 V_1 \quad \text{and} \quad 
\dot{V}_2  \leq  - \alpha_2 V_2.
\end{aligned}
\end{equation}
\end{itemize}

In the following lemma,  we show how to design $u_1$ and $u_3$ to satisfy \eqref{eqdesign1} and \eqref{eqdesign2},  respectively.  
\begin{remark}
Note that \eqref{eqdesign1} involves $w_{xxx}(0)$, which is not guaranteed to remain bounded.   Hence, it is important to design 
$u_1 := \kappa_1 (V_1,w_{xxx}(0))$  with $ w_{xxx}(0) \mapsto \kappa_1(V_1,w_{xxx}(0))$ globally bounded.  Similarly, \eqref{eqdesign2} involves $v_{xxx}(L)$.  Hence, it is important to design  $u_3 := \kappa_3(V_2,v_{xxx}(L))$  with $ v_{xxx}(L) \mapsto \kappa_3(V_2,v_{xxx}(L))$ globally bounded.  
\end{remark}

\begin{lemma} \label{lemdesign}
To satisfy \eqref{eqdesign1},  we take 
\begin{equation*}
\begin{aligned}
u_1 := \kappa_1  (V_1, w_{xxx}(0)) := 
\left\{ \begin{matrix}
  - \text{sign} (w_{xxx}(0))  V_1 & \text{if} ~ |w_{xxx}(0)| \geq l_1(V_1) \\
k_1(V_1) & \text{otherwise},
\end{matrix} \right.
\end{aligned}
\end{equation*}
where $k_1$ is such that
\begin{equation}
\label{eqx1}
\begin{aligned} 
k^3_1 & +3 (a_{w1}  + \delta_2 a_{w2} + \lambda_1 a_{w3} + 1)  k_1^2 + 
3 l_1(V_1)^2   \leq  -3(\alpha_1 + 2 \delta_1) V_1,
\end{aligned}
\end{equation}
and $l_1(V_1) :=  V_1^2/3 + (a_{w1}  + \delta_2 a_{w2})  V_1  + (\alpha_1 + 2 \delta_1)$.

Similarly,  to satisfy  \eqref{eqdesign2},  we take 
\begin{equation*}
\begin{aligned}
u_3 := \kappa_3  (V_2, v_{xxx}(L)) := 
\left\{ \begin{matrix}
  - \text{sign} (v_{xxx}(L))  V_2 & \text{if} ~ |v_{xxx}(L)| \geq l_3(V_2) \\
k_3(V_2) & \text{otherwise},
\end{matrix} \right.
\end{aligned}
\end{equation*}
where $k_3$ is such that
\begin{equation}
\label{eqx2}
\begin{aligned} 
k^3_3 & +3 (b_{v1}  + \delta_2 b_{v2} + \lambda_1 b_{v3} + 1)  k_3^2 + 3 l_3(V_2)^2   \leq  -3(\alpha_2 + 2 \delta_1) V_2,
\end{aligned}
\end{equation}
and $l_3(V_2) :=  V_2^2/3 + (b_{v1}  + \delta_2 b_{v2})  V_2  + (\alpha_2 + 2 \delta_1)$.
\end{lemma}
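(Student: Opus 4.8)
The plan is to verify \eqref{eqdesign1} and \eqref{eqdesign2} directly, by substituting the proposed switched feedback laws and splitting each verification into the two branches that define $\kappa_1$ and $\kappa_3$. The two target inequalities have the same algebraic structure: \eqref{eqdesign2} is obtained from \eqref{eqdesign1} by relabelling $(V_1,w_{xxx}(0),a_{w1},a_{w2},a_{w3},\alpha_1)$ as $(V_2,v_{xxx}(L),b_{v1},b_{v2},b_{v3},\alpha_2)$ and reversing the signs of the cubic and cross terms. Hence it suffices to carry out the argument for $u_1$, after which the claim for $u_3$ follows mutatis mutandis with the corresponding sign adjustments. Throughout I use that $V_1\geq 0$ and abbreviate $A_w:=a_{w1}+\delta_2 a_{w2}+\lambda_1 a_{w3}$.

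First I would treat the saturation branch, where $|w_{xxx}(0)|\geq l_1(V_1)$ and $u_1=-\text{sign}(w_{xxx}(0))V_1$. Here $u_1^2=V_1^2$, the indefinite cross term becomes $u_1 w_{xxx}(0)=-V_1|w_{xxx}(0)|\leq -V_1 l_1(V_1)$ by the branch condition and $V_1\geq 0$, and the cubic is $u_1^3=-\text{sign}(w_{xxx}(0))V_1^3$. Substituting into the left-hand side of \eqref{eqdesign1} and inserting $l_1$, the role of the three terms of $l_1(V_1)$ becomes transparent: the $V_1^2/3$ piece produces $-V_1^3/3$, which offsets the worst-case cubic $+V_1^3/3$ (the case $\text{sign}(w_{xxx}(0))<0$); the term linear in $V_1$ is chosen to cancel the quadratic contribution; and the constant $(\alpha_1+2\delta_1)$ reproduces the target linear term $-(\alpha_1+2\delta_1)V_1$. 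This yields \eqref{eqdesign1} on this branch, the only delicate point being the sign bookkeeping of the cubic, which is exactly why $l_1$ must carry the quadratic coefficient $V_1^2/3$.

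Next I would treat the interpolation branch, where $|w_{xxx}(0)|<l_1(V_1)$ and $u_1=k_1(V_1)$. The only indefinite term is now $k_1 w_{xxx}(0)$, which I bound by $|k_1|\,|w_{xxx}(0)|<|k_1|\,l_1(V_1)$ and then by Young's inequality as $|k_1|\,l_1(V_1)\leq \tfrac12 k_1^2+\tfrac12 l_1(V_1)^2$. Multiplying the target \eqref{eqdesign1} by $3$ and substituting, the contribution $3 k_1 w_{xxx}(0)$ is dominated by $\tfrac32 k_1^2+\tfrac32 l_1(V_1)^2$; the ``$+1$'' appearing in the coefficient $3(A_w+1)$ and the factor $3$ on $l_1(V_1)^2$ in \eqref{eqx1} furnish precisely the slack needed to absorb these two terms. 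Inequality \eqref{eqx1} then gives $k_1^3+3A_w k_1^2+3 k_1 w_{xxx}(0)\leq -3(\alpha_1+2\delta_1)V_1$, which is \eqref{eqdesign1} after dividing by $3$. I would also record that such a $k_1$ always exists: for each fixed $V_1\geq 0$ the left-hand side of \eqref{eqx1} is a cubic polynomial in $k_1$ with positive leading coefficient, hence tends to $-\infty$ as $k_1\to-\infty$, so any sufficiently negative value qualifies.

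The main obstacle, and the very reason for the switched construction, is that the boundary derivatives $w_{xxx}(0)$ and $v_{xxx}(L)$ are not a priori bounded along solutions, so the feedback must stay bounded as a function of these arguments (as flagged in the preceding remark). The two-branch design resolves this: on the saturation branch the sign of the feedback is chosen to render the indefinite cross term maximally negative, while on the interpolation branch the feedback is the bounded profile $k_1$ and the cross term is absorbed via Young's inequality. Once both branches are verified, piecing them together produces a globally defined feedback, bounded in its third-derivative argument, that satisfies \eqref{eqdesign1}; repeating the identical two-case analysis for $u_3$ establishes \eqref{eqdesign2} and completes the proof.
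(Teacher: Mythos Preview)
Your argument is correct and follows the same two-case split as the paper. On the saturation branch you and the paper do exactly the same thing: set $u_1=-\operatorname{sign}(w_{xxx}(0))V_1$, use $u_1 w_{xxx}(0)=-V_1|w_{xxx}(0)|\le -V_1 l_1(V_1)$, and observe that the three pieces of $l_1$ cancel the cubic, quadratic, and produce the target linear term.

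On the interpolation branch there is a small but genuine difference. The paper, after bounding $u_1 w_{xxx}(0)\le |\kappa_1|\,|l_1(V_1)|$, stops at the sufficient condition
\[
\kappa_1^3+3A_w\kappa_1^2+3|\kappa_1|\,|l_1(V_1)|\le -3(\alpha_1+2\delta_1)V_1,
\]
which is \emph{not} literally \eqref{eqx1}; the step from this inequality to the stated condition \eqref{eqx1} (with the extra ``$+1$'' in the quadratic coefficient and $3l_1(V_1)^2$ in place of $3|\kappa_1||l_1|$) is left implicit in the paper. Your use of Young's inequality, $|k_1|\,l_1(V_1)\le \tfrac12 k_1^2+\tfrac12 l_1(V_1)^2$, supplies exactly this missing link and explains why the ``$+1$'' and the squared $l_1$ term appear in \eqref{eqx1}. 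In that sense your write-up is slightly more complete than the paper's own proof, while remaining the same argument in substance.
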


\begin{remark}
Note that \eqref{eqx1} and \eqref{eqx2} always admit a solution $k_1$ and $k_3$ function of $V_1$ and $V_2$, respectively.  
For example,   one can take $k_1$ as a second-order polynomial of $V_1$  with strictly negative coefficients that are sufficiently large. 
\end{remark}

\subsection{$\mathcal{L}^2$ Exponential Stability}

Let $\Sigma^{cl}_1$  be the system obtained from  $\Sigma_1$ when   \eqref{eqDerich1} holds,   $u_2 = 0$,  $(u_1,u_3) = (\kappa_1,0)$ on $I_1$, and  $(u_1,u_3) = (0, \kappa_3)$ on $I_2$.  
In this section,  we show how to find positive constants $\alpha_1$ and $\alpha_2$ such that the trivial solution to $\Sigma^{cl}_1$ is  
$\mathcal{L}^2$ globally exponentially stable.

\begin{definition} 
The trivial solution to $\Sigma^{cl}_1$ is $\mathcal{L}^2$-GES  if there exist $\gamma$, $\kappa > 0$ such that, for each solution $u$ to $\Sigma^{cl}_1$,  we have  $||u(t_o + t)||  \leq \kappa e^{- \gamma t} ||u(t_o)||$ for all $(t_o,t) \in \mathbb{R}_{\geq 0} \times   \mathbb{R}_{\geq 0}$. 
\end{definition}
According to the proposed approach,  we establish the 
$\mathcal{L}_2$-GES for $\Sigma^{cl}_1$ by showing, for an appropriate choice of $(\alpha_1,\alpha_2)$,  GES of the origin for the switched system
\begin{equation*} 
\begin{aligned}
\Sigma_3 : \left\{
\begin{matrix}
\begin{matrix}
\dot{V}_1 = & - \alpha_1 V_1
\\
\dot{V}_2 = & 2 \delta_1 V_2
\end{matrix}
\qquad  t \in I_1,
\\   ~~ \\ 
\begin{matrix}
\dot{V}_1 = & 2 \delta_1 V_1 
\\
\dot{V}_2 = & - \alpha_2 V_2
\end{matrix}
\qquad  t \in I_2,
\end{matrix}
\right.  \qquad  (V_1,V_2) \in \mathbb{R}_{\geq 0} \times \mathbb{R}_{\geq 0}.
\end{aligned}
\end{equation*}

\begin{theorem} \label{thm1}
Consider system $\Sigma_1$ under the sensing scenario in Section \ref{Sec.sens} and the boundary conditions in \eqref{eqDerich1}.  Assume that,  for some $\bar{T}_1$,  $\bar{T}_2>0$, Assumption \ref{ass1} holds.  
Furthermore, we let  $u_2 = 0$, $(u_1,u_3) = (\kappa_1,0)$ on $I_1$,  and  $(u_1,u_3) = (0, \kappa_3)$ on $I_2$, where $\kappa_1,\kappa_3$ come from Lemma \ref{lemdesign}. 
Then,  for $(\alpha_1,\alpha_2)$ satisfying 
\begin{equation}  \label{eq.condi}
\begin{aligned}
 \alpha_1    >  \frac{2\delta_2 \bar{T}_2}{\bar{T}_1} 
\quad \text{and} \quad 
 \alpha_2  >   \frac{2\delta_2 \bar{T}_1}{\bar{T}_2},
\end{aligned}
\end{equation}
  the trivial solution to the closed-loop system is $\mathcal{L}^2$-GES.
\end{theorem}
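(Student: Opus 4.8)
The plan is to deduce the claimed $\mathcal{L}^2$-GES from global exponential stability of the origin of the finite-dimensional switched system $\Sigma_3$, and then to translate that scalar estimate back into a norm estimate on $u$. The bridge is the identity coming from \eqref{eq.4}: since $u = w$ on $[0,Y]$ and $u = v$ on $[Y,L]$, we have $V_1(w) + V_2(v) = \tfrac{1}{2}\|u\|^2$. By the control design of Section \ref{Sec.DBC1}, along any solution the pair $(V_1,V_2)$ satisfies \eqref{eqSysI1} on each interval of $I_1$ and \eqref{eqSysI2} on each interval of $I_2$. Applying the comparison principle on each such subinterval and chaining the resulting bounds across the switching instants $\{t_i\}$, the trajectory $(V_1,V_2)$ is dominated componentwise by the corresponding trajectory of $\Sigma_3$. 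Hence it suffices to prove that $(V_1,V_2)$ governed by $\Sigma_3$ decays exponentially, uniformly in the initial time.

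The key structural observation is that in $\Sigma_3$ the two scalar equations are \emph{decoupled}: $V_1$ evolves at rate $-\alpha_1$ on $I_1$ and $+2\delta_1$ on $I_2$, while $V_2$ evolves at rate $+2\delta_1$ on $I_1$ and $-\alpha_2$ on $I_2$, with no cross-terms. I would therefore integrate the piecewise-constant rates directly. Over one full period $[t_{2k-1},t_{2k+1})$, which by Assumption \ref{ass1} has length $\bar{T}_1 + \bar{T}_2$, the value $V_1$ is multiplied by $\rho_1 := e^{-\alpha_1 \bar{T}_1 + 2\delta_1 \bar{T}_2}$ and $V_2$ by $\rho_2 := e^{2\delta_1 \bar{T}_1 - \alpha_2 \bar{T}_2}$. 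From their definitions in Lemma \ref{lem4} one has $\delta_2 - \delta_1 = |\delta| \geq 0$, so condition \eqref{eq.condi} yields $\alpha_1 \bar{T}_1 > 2\delta_2 \bar{T}_2 \geq 2\delta_1 \bar{T}_2$ and $\alpha_2 \bar{T}_2 > 2\delta_2 \bar{T}_1 \geq 2\delta_1 \bar{T}_1$; consequently $\rho_1, \rho_2 \in (0,1)$, and I set $\rho := \max\{\rho_1,\rho_2\} < 1$.

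It remains to turn per-period contraction into a genuine exponential bound for an arbitrary initial time $t_o \geq 0$ that need not coincide with a switching instant. For $t \geq t_o$ I would write the elapsed time as $n$ complete periods plus a partial remainder, so that $n \geq (t-t_o)/(\bar{T}_1 + \bar{T}_2) - 1$. The intra-period overshoot of each component is bounded uniformly by $M := \max\{1,\, e^{2\delta_1(\bar{T}_1+\bar{T}_2)}\} \geq 1$, since a non-decaying component can grow by at most this factor before the next switch, while the actively controlled component only decreases. Combining the $n$-fold contraction with this overshoot gives $V_1(t)+V_2(t) \leq M\rho^{\,n}\,(V_1(t_o)+V_2(t_o)) \leq \kappa_0\, e^{-\gamma(t-t_o)}(V_1(t_o)+V_2(t_o))$, where $\gamma := -\ln\rho/(\bar{T}_1+\bar{T}_2) > 0$ and $\kappa_0 := M/\rho \geq 1$. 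Finally, using $V_1+V_2 = \tfrac12\|u\|^2$ I obtain $\|u(t)\|^2 \leq \kappa_0\, e^{-\gamma(t-t_o)}\|u(t_o)\|^2$, and taking square roots establishes $\mathcal{L}^2$-GES with gain $\sqrt{\kappa_0}$ and rate $\gamma/2$.

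The main obstacle is conceptual rather than computational: there is no single Lyapunov function that decreases for all $t$, because on $I_1$ the component $V_2$ grows and on $I_2$ the component $V_1$ grows, so a monotone-descent argument is unavailable. What rescues the proof is precisely the decoupling of $V_1$ and $V_2$ in $\Sigma_3$ together with the dwell-time balance \eqref{eq.condi}, which guarantees that over each full period the decay accumulated while a component is actively controlled strictly dominates the growth it suffers while uncontrolled. Beyond this, the only remaining care is the bookkeeping for an unaligned initial time $t_o$ and the uniform control of the finite intra-period overshoot $M$, both of which are routine once the per-period contraction factors $\rho_1,\rho_2$ have been identified.
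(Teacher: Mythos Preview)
Your proposal is correct and follows essentially the same route as the paper: reduce $\mathcal{L}^2$-GES of $\Sigma_1^{cl}$ to GES of the decoupled switched system $\Sigma_3$, compute the per-period multipliers for $V_1$ and $V_2$, use the dwell-time condition \eqref{eq.condi} to make both multipliers strictly less than one, and absorb the intra-period overshoot into the constant $\kappa$. You are in fact slightly more careful than the paper on two points: you explicitly invoke $\delta_2 \ge \delta_1$ (from the formulas in Lemma~\ref{lem4}) to pass from the hypothesis stated with $\delta_2$ to the growth rate $2\delta_1$ actually appearing in $\Sigma_3$, and you address the bookkeeping for an arbitrary initial time $t_o$ not aligned with the switching grid, whereas the paper tacitly starts at $T_o=0$.
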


 \ifitsdraft
\begin{proof}
 According to the proposed framework, we analyze the 
$\mathcal{L}_2$-GES of $\Sigma_1$ by analyzing the same property for $\Sigma_2$. Furthermore,  since \eqref{eqdesign1} holds on $I_1$ and \eqref{eqdesign2} holds on $I_2$, it is enough to show GES of the switched system $\Sigma_3$.  Namely,  we will show the existence of $\gamma, \kappa > 0$ such that, for every solution $(V_1,V_2)$ to $\Sigma_3$ starting from $(V_1(t_o),V_2(t_o))$, we have 
\begin{equation} \label{eqGES}
\begin{aligned} 
W(t_o + t) & := V_1(t_o + t)  + V_2(t_o + t) 
  \leq \kappa e^{- \gamma t} \left(  V_1(t_o) +  V_2(t_o) \right) \quad \forall t \geq t_o.        
\end{aligned}
\end{equation}
To do so, we introduce the sequence 
$\{T_i\}^{\infty}_{i = 0}$ with $T_o := 0$ and $T_i = t_{2i+1}$, and we note that 
$$ 
T_{i+1} - T_i = \bar{T}_1 + \bar{T_2} \qquad \forall i \in \{0,1,2,...\}. 
$$

Furthermore, along each solution $(V_1,V_2)$ to $\Sigma_3$, we have
\begin{align*} 
& W(T_{i+1})  := 
V_1(T_{i+1}) + V_2(T_{i+1}) \\ 
&  = e^{- \alpha_1 \bar{T}_1 + 2\delta_2 \bar{T}_2} V_1(T_{i})  + e^{- \alpha_2 \bar{T}_2 + 2\delta_2 \bar{T}_1} V_2(T_{i})
\\ & \leq  \max \left\{ e^{- \alpha_1 \bar{T}_1 + 2\delta_2 \bar{T}_2},  e^{- \alpha_2 \bar{T}_2 + 2\delta_2 \bar{T}_1} \right\} \left(V_1(T_i) + V_2(T_i)  \right) 
\\ &
\leq  \max \left\{ e^{- \alpha_1 \bar{T}_1 + 2\delta_2 \bar{T}_2},  e^{- \alpha_2 \bar{T}_2 + 2\delta_2 \bar{T}_1} \right\} W(T_i).
\end{align*} 
Now, we choose the constants $\alpha_1>0$ and $\alpha_2>0$ such that 
\begin{equation}
\label{eq.condi+}
\begin{aligned}
- \alpha_1 \bar{T}_1 + 2\delta_2 \bar{T}_2  <0
\quad \text{and} \quad
- \alpha_2 \bar{T}_2 + 2\delta_2 \bar{T}_1  <0.
\end{aligned}
\end{equation}
As a consequence, we obtain 
\begin{align*}
 W(T_{i+1}) \leq  e^{- \min \left\{ (\alpha_1 \bar{T}_1 - 2\delta_2 \bar{T}_2),  (\alpha_2 \bar{T}_2 - 2\delta_2 \bar{T}_1) \right\} } W(T_i).
\end{align*} 
Hence, by taking 
$$ \beta :=\frac{ \min \left\{ (\alpha_1 \bar{T}_1 - 2\delta_2 \bar{T}_2), (\alpha_2 \bar{T}_2 - 2\delta_2 \bar{T}_1) \right\} }{\bar{T}_1 + \bar{T_2}}, $$
we conclude that
\begin{align*} 
W(T_{i+1}) & \leq e^{ - \beta (T_{i+1} - T_i ) } W(T_{i}) \qquad \forall i \in \{0,1,2,...\}. 
\end{align*} 
Next, we verify \eqref{eqGES} while showing that $\beta$ is a convergence rate. Indeed, note that, for each $t \in [T_o,T_1]$, we have 
\begin{align*}
W(t) \leq ~ & e^{2\delta_1 (\bar{T}_1 + \bar{T}_2)} W(T_o) 
\leq  e^{(2\delta_1 + \beta) (\bar{T}_1 + \bar{T}_2)}  e^{-\beta(t - T_o)}  W(T_o).
\end{align*}
Next, for each $t \in [T_i,T_{i+1}]$ such that $i \in \{1,2,...\}$, we have 
\begin{align*}
W(t) \leq ~ & e^{2\delta_1 (\bar{T}_1 + \bar{T}_2)} W(T_i) 
\\
\leq ~ & 
e^{2\delta_1 (\bar{T}_1 + \bar{T}_2)}  e^{-\beta(T_i - T_o)}  W(T_o)     
\\
\leq ~ & e^{(2\delta_1 + \beta) (\bar{T}_1 + \bar{T}_2)} e^{-\beta(t - T_i)} e^{-\beta(T_i - T_o)}  W(T_o) 
\\
\leq ~& e^{(2\delta_1 + \beta) (\bar{T}_1 + \bar{T}_2)}  e^{-\beta(t - T_o)}  W(T_o).
\end{align*} 
Hence, \eqref{eqGES} holds with $\gamma = \beta$  and 
$\kappa = e^{(2\delta_1 + \beta) (\bar{T}_1 + \bar{T}_2)}$.
\end{proof}
\fi

\section{Main Result 2 : Control at $x = 0$ and $x = Y$} \label{Sec.DBC2}

In this section,  we let $u_3 = 0$. As a result,   \eqref{eqLyapgenbis1} reduces to
\begin{equation} \label{eqLyap2bis}
\begin{aligned}
\hspace{-0.4cm} \dot{V}_1  \leq &~ 2 \delta_1 V_1 - \frac{u^3_2}{3} - u_2 w_{xxx}(Y) + \frac{u^3_1}{3}  + u_1 w_{xxx}(0) 
\\ & + 
C_{w1}(u_1,u_2)  + C_{w2}(u_1,u_2) \delta_2 + C_{w3}(u_1,u_2) \lambda_1,  
\\
\hspace{-0.4cm} \dot{V}_2 \leq &~ 2 \delta_1 V_2 + \frac{u^3_2}{3}  +  u_2 v_{xxx}(Y)
\\ &
+ C_{v1}(u_2,0)  + C_{v2}(u_2,0)  \delta_2 + 
C_{v3}(u_2,0)  \lambda_1.  
\end{aligned}
\end{equation}

\subsection{Control Design}

When $t \in I_1$,  we set  $u_2 = 0$ and choose $u_1$ such that  \eqref{eqdesign1} holds.  To obtain
\begin{equation}
\label{eqonI1}
\begin{aligned}
\dot{V}_1   \leq  - \alpha_1 V_1
\quad  \text{and}  \quad 
 \dot{V}_2  \leq  2 \delta_1 V_2.
\end{aligned}
\end{equation}

When $t \in I_2$,  we note that the first inequality in \eqref{eqLyap2bis}  involves the term $\left[ u_1 w_{xxx}(0) \right]$.   Note that $w_{xxx}(0)$ is unknown on the interval $I_2$.  Hence, before designing $(u_1,u_2)$, we introduce the following lemma.

\begin{lemma} \label{lemwxxx}
Consider $\Sigma_2$ under \eqref{eqBC1} and \eqref{eqDerich}. Then
\begin{align*}
w_{xxx}(0) =  w_{xxx}(Y) +  \frac{u^2_2 - u^2_1}{2}  
+ \dot{\gamma},
\end{align*}
where $\gamma := \left(\int^{Y}_{0} w(x) dx \right)$.
\end{lemma}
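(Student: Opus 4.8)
The plan is to compute $\dot{\gamma}$ directly by differentiating under the integral sign and substituting the dynamics of $\Sigma_2$, then to simplify each resulting term using the boundary conditions in \eqref{eqDerich}. Since $(w(x,\cdot),v(x,\cdot))$ is locally absolutely continuous by Definition \ref{Def1-} and $w(\cdot,t) \in H^4(0,Y)$, I can exchange the time derivative with the spatial integral to obtain $\dot{\gamma} = \int_0^Y w_t(x)\,dx$. Replacing $w_t$ by $-w w_x - \lambda_1 w_{xx} - w_{xxxx}$ then splits $\dot{\gamma}$ into three integrals that I would evaluate one at a time.

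First I would handle the nonlinear term: since $w w_x = \tfrac{1}{2}(w^2)_x$, its contribution equals $-\tfrac{1}{2}\left[ w(Y)^2 - w(0)^2 \right]$, which by $w(0) = u_1$ and $w(Y) = u_2$ becomes $-\tfrac{1}{2}(u_2^2 - u_1^2)$. Next, $\int_0^Y w_{xx}(x)\,dx = w_x(Y) - w_x(0) = 0$ because the boundary conditions impose $w_x(0) = w_x(Y) = 0$, so the $\lambda_1$-term drops out entirely. Finally, $\int_0^Y w_{xxxx}(x)\,dx = w_{xxx}(Y) - w_{xxx}(0)$, and since this integral enters $\dot{\gamma}$ with a minus sign it produces $w_{xxx}(0) - w_{xxx}(Y)$.

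Collecting the three contributions gives $\dot{\gamma} = -\tfrac{1}{2}(u_2^2 - u_1^2) + w_{xxx}(0) - w_{xxx}(Y)$, and solving for $w_{xxx}(0)$ yields the claimed identity. There is no genuine obstacle in this computation; the only point requiring any care is the justification for differentiating under the integral sign, which is guaranteed by the regularity assumptions in Definition \ref{Def1-} (local absolute continuity in $t$ together with the spatial $H^4$-regularity in $x$). The purpose of the identity is to re-express the unmeasured quantity $w_{xxx}(0)$ on the interval $I_2$ in terms of $w_{xxx}(Y)$, the inputs $(u_1,u_2)$, and the total time derivative $\dot{\gamma}$, so that it can subsequently be absorbed into the Lyapunov-based design rather than treated as an unknown disturbance.
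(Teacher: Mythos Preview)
Your proof is correct and essentially identical to the paper's: both compute $\int_0^Y w_t\,dx$ by substituting the PDE and evaluating the three resulting boundary terms using \eqref{eqDerich}. The only cosmetic difference is that the paper begins from $w_{xxx}(0)=w_{xxx}(Y)-\int_0^Y w_{xxxx}\,dx$ and then replaces $w_{xxxx}$ via the equation, whereas you begin from $\dot\gamma$ directly; the computations and final rearrangement are the same.
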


Using Lemma \ref{lemwxxx},  \eqref{eqLyap2bis} becomes 
\begin{equation} \label{eqLyap3}
\begin{aligned}
 \dot{V}_1 & \leq  - \frac{u^3_2}{3} + (u_1- u_2) w_{xxx}(Y) -  
 \frac{u^3_1}{6} 
\\ & + C_{w1}(u_1,u_2) +  \frac{u_1 u^2_2}{2} + 2 \delta_1 V_1 
+ \delta_2 C_{w2}(u_1,u_2) 
+ \lambda_1 C_{w3}(u_1,u_2) + u_1 \dot{\gamma},  
\\ 
\dot{V}_2 & \leq \frac{u^3_2}{3} + C_{v2}(u_2,0) \delta_2 + C_{v3}(u_2,0) \lambda_1  + C_{v1}(u_2,0)  + u_2 v_{xxx}(Y) + 2 \delta_1 V_2.  
\end{aligned}
\end{equation}
Now, we introduce constants $B>0$ and $C>0$ such that
\begin{align*} 
B u_2^2 & := C_{v3}(u_2,0) \lambda_1 + C_{v2}(u_2,0) \delta_2 + C_{v2}(u_2,0),
\\
C u_2^2 & := C_{w1}(u_2,u_2)  + \delta_2 C_{w2}(u_2,u_2) + \lambda_1 C_{w3}(u_2,u_2).
\end{align*}
Hence,  on the interval $I_2$,  we propose to choose $(u_1,u_2)$ 
such that $u_1 =  u_2$ and 
\begin{equation} \label{eqdesign2bis}
\begin{aligned}
\frac{u^3_2}{3} + B u_2^2  + u_2 v_{xxx}(Y) &  \leq -\alpha_2 V_2^3.
\end{aligned}
\end{equation}

As a consequence, we obtain,  for almost all $t \in I_2$,
\begin{equation}
\label{eqonI2}
\begin{aligned}
\dot{V}_1  \leq 2 \delta_1 V_1  + u_2 \dot{\gamma} +  C u_2^2
\quad \text{and} \quad 
\dot{V}_2 \leq - \alpha_2 V_2^3 + 2 \delta_1 V_2.
\end{aligned}
\end{equation}

Now,   we propose to find 
$ u_2 := \kappa_2 \left( \alpha_2^{\frac{1}{3}} V_2, v_{xxx}(Y) \right)$ so that  both \eqref{eqdesign2bis} and the following property hold. 
  
 \begin{property} \label{ass2}
There exists $P > 0$ such that
\begin{align} \label{asscond1}
 \bigg|\kappa_2 \left( \alpha_2^{\frac{1}{3}} V_2,v_{xxx}(Y) \right)\bigg| \leq  P \alpha_2^{\frac{1}{3}} V_2 \qquad \forall v_{xxx}(Y) \in \mathbb{R},
\end{align}
and,  for almost all $t \geq t_o \geq 0$,  we have 
\begin{align} \label{asscond2} 
 \frac{d}{dt} 
\kappa_2 \left( \alpha_2^{\frac{1}{3}} V_2(t_o),v_{xxx}(Y,t) \right)   = 0.
 \end{align} 
\end{property}    

\begin{lemma} \label{lemdesu2}
Property \ref{ass2} and  \eqref{eqdesign2bis}  hold for 
\begin{equation*}
\begin{aligned}
  \kappa_2  \left( \alpha_2^{\frac{1}{3}} V_2,  v_{xxx}(Y) \right) := 
\left\{ \begin{matrix}
  - \text{sign} (v_{xxx}(Y)) \alpha_2^{\frac{1}{3}} V_2 & \text{if} ~ |v_{xxx}(Y)| \geq 2  \alpha_2^{\frac{2}{3}} V_2^2 \\
- \beta \alpha_2^{\frac{1}{3}} V_2 & \text{otherwise},
\end{matrix} \right.
\end{aligned}
\end{equation*}
where  $\beta > 0$ is chosen so that $ -\beta^3 + 6 \beta + 3 \leq 0$. 
\end{lemma}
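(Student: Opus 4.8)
The plan is to treat the two requirements in turn — the two conditions comprising Property~\ref{ass2}, and then the design inequality~\eqref{eqdesign2bis} — working directly from the explicit two-branch formula for $\kappa_2$. To lighten notation I abbreviate $s := \alpha_2^{1/3} V_2 \geq 0$ and $p := v_{xxx}(Y)$, so that $\kappa_2$ equals $-\text{sign}(p)\,s$ on the saturated region $\{|p| \geq 2 s^2\}$ and $-\beta s$ on its complement, while the target right-hand side of~\eqref{eqdesign2bis} is $-\alpha_2 V_2^3 = -s^3$.

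For Property~\ref{ass2}, the bound~\eqref{asscond1} is immediate: on the saturated branch $|\kappa_2| = s = \alpha_2^{1/3} V_2$ and on the unsaturated branch $|\kappa_2| = \beta s$, so it holds with $P := \max\{1,\beta\}$. For~\eqref{asscond2} the key point is that, once $V_2(t_o)$ (hence $s$) is frozen, the map $p \mapsto \kappa_2(s,p)$ is piecewise constant, taking only the three values $+s$, $-s$, $-\beta s$ and jumping only across $p = \pm 2 s^2$. Hence $t \mapsto \kappa_2(s, v_{xxx}(Y,t))$ is piecewise constant in $t$, and its time derivative vanishes wherever $v_{xxx}(Y,t) \neq \pm 2 s^2$, i.e.\ for almost all $t$, which is precisely~\eqref{asscond2}.

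The substantive part is~\eqref{eqdesign2bis}, which I would prove by splitting on the two branches and reducing each to a scalar polynomial inequality in $s$. On the saturated branch I use $\kappa_2\,p = -s\,\text{sign}(p)\,p = -s|p| \leq -2 s^3$, together with $\kappa_2^3 = -\text{sign}(p)\,s^3 \leq s^3$ and $\kappa_2^2 = s^2$, to bound the left-hand side of~\eqref{eqdesign2bis} by $-\tfrac{5}{3}s^3 + B s^2$. On the unsaturated branch I substitute $\kappa_2 = -\beta s$ and bound $\kappa_2\,p = -\beta s\,p \leq 2\beta s^3$ via $|p| < 2s^2$, which bounds the left-hand side by $\tfrac{1}{3}(-\beta^3 + 6\beta + 3)\,s^3 + B\beta^2 s^2$; here the cubic coefficient is exactly the quantity that the hypothesis $-\beta^3 + 6\beta + 3 \leq 0$ renders nonpositive, which is where the stated condition on $\beta$ originates.

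The main obstacle is reconciling the cubic target $-s^3$ with the residual quadratic terms $B s^2$ and $B\beta^2 s^2$ that survive in the two branches. The cube-root scaling built into the saturation threshold $2\alpha_2^{2/3} V_2^2 = 2 s^2$ is engineered so that the $-2 s^3$ gain of the saturated branch, and the strictly negative leading cubic of the unsaturated branch, dominate these quadratic remainders; carrying this domination through — i.e.\ verifying $-\tfrac{5}{3}s^3 + B s^2 \leq -s^3$ and $\tfrac{1}{3}(-\beta^3+6\beta+3)s^3 + B\beta^2 s^2 \leq 0$ on the relevant range of $s$ — is the delicate bookkeeping step I expect to be hardest. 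Once both scalar inequalities are secured, \eqref{eqdesign2bis} holds throughout $I_2$ and, combined with the two estimates above, the lemma follows.
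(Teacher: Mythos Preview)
Your two-branch case split and the verification of Property~\ref{ass2} match the paper's proof (the paper takes $P=\beta+1$, which agrees with your $\max\{1,\beta\}$ once $\beta\geq 1$, and it obtains \eqref{asscond2} from $|\partial\kappa_2/\partial s|\leq \beta+1$ together with $ds/dt=0$ when $V_2(t_o)$ is frozen; your ``piecewise constant in $p$'' observation is a cleaner route to the same conclusion). The gap is in your final paragraph. The ``delicate bookkeeping'' you anticipate cannot be carried out: $-\tfrac{5}{3}s^3+Bs^2\leq -s^3$ is equivalent to $s\geq \tfrac{3B}{2}$ and therefore fails for every $0<\alpha_2^{1/3}V_2<\tfrac{3B}{2}$, and the unsaturated branch has the same defect since $B\beta^2 s^2$ dominates any cubic near $s=0$. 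No choice of $\beta$ repairs this. Thus \eqref{eqdesign2bis} as written, with $B>0$, is \emph{not} delivered by the given $\kappa_2$ for small $V_2$.

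The paper's own proof does not confront this point: it silently replaces \eqref{eqdesign2bis} by the stripped target $u_2^3+3u_2\,v_{xxx}(Y)\leq -3\alpha_2 V_2^3$, i.e.\ it drops the $Bu_2^2$ term before performing the identical case split. Your computations already establish \emph{that} inequality --- it is exactly where the hypothesis $-\beta^3+6\beta+3\leq 0$ arises --- so if you make the same omission your argument goes through verbatim. What you have in fact uncovered is that the lemma, read literally with the $Bu_2^2$ term present in \eqref{eqdesign2bis}, overreaches; the paper's proof and yours both deliver only the $B=0$ version.
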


 \subsection{$\mathcal{L}^2$-Stability Analysis} 

Let $\Sigma^{cl}_1$  be the system obtained from  $\Sigma_1$ when   \eqref{eqDerich1} holds,   $u_3 = u_2 = 0$,  $u_1 = \kappa_1(V_1,w_{xxx}(0))$ on $I_1$, and $u_3 = 0$ and $u_1 = u_2 = \kappa_2  \left( \alpha_2^{\frac{1}{3}} V_2(t_{2k}),  v_{xxx}(Y) \right)$ on each interval $[t_{2k}, t_{2k+1} ] \subset I_2$.  Recall that,  by definition of $\kappa_1$ and $\kappa_2$,  $\Sigma^{cl}_1$ includes $(\alpha_1,\alpha_2)$ as free design parameters.  Next,  inspired by \cite{teel1999semi}, we introduce some useful  $\mathcal{L}^2$-semi-global and $\mathcal{L}^2$-practical-stability notions.

 The trivial solution to $\Sigma^{cl}_1$ is $\mathcal{L}^2$ practically semi-globally attractive ($\mathcal{L}^2$-PSGA) if,  for each $\beta > \epsilon > 0$, there exists  $\alpha_2^{\star} > 0$ such that, for each $\alpha_2 \geq \alpha^\star_2$, there exists $\alpha^\star_1$ such that, for each $\alpha_1 \geq \alpha^\star_1$,   every solution $u$ to $\Sigma^{cl}_1$ with  $|| u(t_o) || \leq \beta$,  there exists $T>0$ such that  $||u(t_o + T)||  \leq \epsilon$.

 $\Sigma^{cl}_1$ is $\mathcal{L}^2$ semi-globally bounded ($\mathcal{L}^2$-SGB) if,  for each $\beta>0$,  there exists exists $\gamma>0$ and 
$\alpha_2^{\star} > 0$ such that, for each $\alpha_2 \geq \alpha^\star_2$, there exists $\alpha^\star_1$ such that, for each $\alpha_1 \geq \alpha^\star_1$, we have,  
for every solution $u$ to $\Sigma^{cl}_1$ with $||u(t_o)|| \leq \beta$,  $||u(t_o + t)|| \leq  \gamma$ for all $t \geq  0$.   Furthermore,  $\Sigma^{cl}_1$ is $\mathcal{L}^2$ semi-globally ultimately bounded ($\mathcal{L}^2$-SGUB) if there exists $\gamma>0$ such that, for each $\beta>0$,  there exists $\alpha_2^{\star} > 0$ such that, for each $\alpha_2 \geq \alpha^\star_2$, there exists $\alpha^\star_1$ such that, for each $\alpha_1 \geq \alpha^\star_1$,  for every solution $u$ to $\Sigma^{cl}_1$ with $||u(t_o)|| \leq \beta$,  there exists $T>0$ such that  $||u(t_o + t)|| \leq  \gamma$ for all $t \geq  T$.  

The trivial solution to $\Sigma^{cl}_1$ is $\mathcal{L}^2$ practically stable ($\mathcal{L}^2$-PS) if there exists  $\kappa \in \mathcal{K}$ such that,  for each $\epsilon > 0$,  there exists  $\alpha_2^{\star} > 0$ such that, for each $\alpha_2 \geq \alpha^\star_2$, there exists $\alpha^\star_1$ such that, for each $\alpha_1 \geq \alpha^\star_1$,   we have $||u(t)|| \leq \kappa \left( ||u(t_o)|| \right) + \epsilon$ for $t \geq t_o$.

\begin{remark}
Note that to guarantee $\mathcal{L}_2$-practical semi-global asymptotic stability,  we need to guarantee $\mathcal{L}_2$-PSGA,  $\mathcal{L}_2$-SGB,  and $\mathcal{L}_2$-PS.   However,  in our case,  we will be able to show only $\mathcal{L}_2$-PSGA,  $\mathcal{L}_2$-SGB,  and $\mathcal{L}_2$-SGUB. 
\end{remark}

\begin{theorem} \label{thm2}
Consider system $\Sigma_1$ under the sensing scenario in Section \ref{Sec.sens} and the boundary conditions in \eqref{eqDerich1}.  Assume that, for some $\bar{T}_1, \bar{T}_2>0$, Assumption \ref{ass1} holds.   Furthermore,  we  let  $u_3 = u_2 = 0$,  $u_1 = \kappa_1(V_1, w_{xxx}(0))$ on $I_1$,  and $u_1 = u_2 = \kappa_2 \left( \alpha_2^{\frac{1}{3}} V_2(t_{2k}),  v_{xxx}(Y) \right)$ on each interval $[t_{2k}, t_{2k+1} ] \subset I_2$ and $u_3 = 0$ on $I_2$, where $(\kappa_1,\kappa_2)$ come from Lemmas \ref{lemdesign} and \ref{lemdesu2}, respectively.   Then,  the trivial solution to $\Sigma^{cl}_1$ is $\mathcal{L}^2$-PSGA and  $\Sigma^{cl}_1$ is $\mathcal{L}^2$-SGB and  $\mathcal{L}^2$-SGUB. 
\end{theorem}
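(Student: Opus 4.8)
The plan is to transfer the $\mathcal{L}^2$-properties of $\Sigma^{cl}_1$ to the scalar comparison dynamics \eqref{eqonI1}--\eqref{eqonI2} for $(V_1,V_2)$, exactly as in the proof of Theorem \ref{thm1}, using that $\|u\|^2 = 2(V_1+V_2)$. The crucial difference from the first scenario is that on $I_2$ the $V_2$-channel is now stabilized by the \emph{cubic} term $-\alpha_2 V_2^3$ rather than a linear one, and that the $V_1$-channel is perturbed on $I_2$ by the coupling term $u_2\dot\gamma + C u_2^2$. I would analyze the two channels in the order $V_2$ then $V_1$, which matches the quantifier order in the definitions (choose $\alpha_2^\star$ first, then $\alpha_1^\star$), and then assemble a discrete contraction estimate across each full cycle $[T_i,T_{i+1})$ with $T_i := t_{2i+1}$, $T_{i+1}-T_i = \bar{T}_1 + \bar{T}_2$.

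First I would treat $V_2$ alone, since its dynamics in \eqref{eqonI1}--\eqref{eqonI2} does not depend on $V_1$. On $I_1$ one has $\dot V_2\leq 2\delta_1 V_2$, hence growth by the fixed factor $e^{2\delta_1\bar{T}_1}$. On $I_2$, the comparison ODE $\dot y = -\alpha_2 y^3 + 2\delta_1 y$ has, for \emph{any} initial value, $y(\bar{T}_2)\leq (2\alpha_2\bar{T}_2)^{-1/2}$ from the pure-cubic bound, together with a stable equilibrium at $\sqrt{2\delta_1/\alpha_2}$; both are $O(\alpha_2^{-1/2})$ and independent of the starting value. Consequently $V_2$ enters an $O(\alpha_2^{-1/2})$ band after a single $I_2$ interval and stays there. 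This already yields the $V_2$-contribution to the three properties and, via \eqref{asscond1}, the key estimate $|u_2|\leq P\alpha_2^{1/3}V_2(t_{2k})$, so that in the ultimate regime $|u_2| = O(\alpha_2^{-1/6})\to 0$ as $\alpha_2\to\infty$.

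Next I would turn to $V_1$. On $I_1$, \eqref{eqonI1} gives the contraction $V_1(t_{2k})\leq e^{-\alpha_1\bar{T}_1}V_1(t_{2k-1})$. On $I_2$, I would integrate the first inequality in \eqref{eqonI2}; because $u_2$ is held constant on $[t_{2k},t_{2k+1})$ by \eqref{asscond2}, the otherwise intractable disturbance $\int u_2\dot\gamma$ collapses to $u_2\,[\gamma(t_{2k+1})-\gamma(t_{2k})]$, and Cauchy--Schwarz gives $|\gamma| = |\int_0^Y w|\leq \sqrt{2Y}\,\sqrt{V_1}$. A Young-inequality step then absorbs the resulting $|u_2|\sqrt{V_1}$ self-coupling into a small fraction of the growth, leaving an additive disturbance of size $O(u_2^2)=O(\alpha_2^{-1/3})$ and yielding $V_1(t_{2k+1})\leq e^{2\delta_1\bar{T}_2}V_1(t_{2k}) + d(\alpha_2)$ with $d(\alpha_2)\to 0$ in the ultimate regime. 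Choosing $\alpha_1$ large enough that $-\alpha_1\bar{T}_1 + 2\delta_1\bar{T}_2 < 0$ makes the per-cycle factor for $V_1$ strictly less than one.

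Combining the two channels gives a scalar recursion $W(T_{i+1})\leq \rho\,W(T_i) + d$ with $\rho\in(0,1)$ and $d = d(\alpha_2)\to 0$, from which the three conclusions follow: iterating and bounding the within-cycle excursions as in Theorem \ref{thm1} yields $\mathcal{L}^2$-SGB; the fixed point $W_\infty\leq d/(1-\rho)=O(\alpha_2^{-1/3})$ together with $V_2=O(\alpha_2^{-1/2})$ gives a uniform ultimate bound below any prescribed level once $\alpha_2$ is large, i.e. $\mathcal{L}^2$-SGUB; and the finite-time entry into that band gives $\mathcal{L}^2$-PSGA. The main obstacle is unquestionably the coupling term $u_2\dot\gamma$ in \eqref{eqonI2}: it is controlled only through the sample-and-hold structure of \eqref{asscond2}, which lets one integrate $\dot\gamma$ exactly, combined with the fact --- established in the $V_2$-step --- that $|u_2|$ itself becomes small as $\alpha_2\to\infty$. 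A secondary delicate point is the single transient $I_2$ interval on which $V_2(t_{2k})$, and hence $|u_2|$, is still governed by the initial data rather than by $\alpha_2$; this excursion must be absorbed into the ($\beta$-dependent) SGB bound and is irrelevant for the asymptotic SGUB and PSGA claims.
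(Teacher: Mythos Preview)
Your skeleton --- reduce to the scalar comparison system for $(V_1,V_2)$, treat the decoupled $V_2$ first, then $V_1$, and assemble a per-cycle discrete recursion --- is exactly the paper's.  The differences are in the two technical steps, and in both you take a shorter route than the paper.

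For $V_2$, the paper does not work with the cubic ODE directly: it invokes a comparison (Lemma~\ref{lem8}) replacing $-\alpha_2 V_2^3 + 2\delta_1 V_2$ by the linear $-\alpha_2^{1/3}V_2 + b_o$, which yields only $V_2 = O(\alpha_2^{-1/3})$ and hence $\alpha_2^{1/3}V_2 = O(1)$.  Your direct integration of the cubic flow gives the sharper $V_2 = O(\alpha_2^{-1/2})$, hence $|u_2| = O(\alpha_2^{-1/6})$, and an ultimate disturbance that actually vanishes as $\alpha_2\to\infty$; by contrast the paper's ultimate bound $\gamma$ in the SGUB proof is an $\alpha_2$-independent constant.

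For $V_1$ on $I_2$, the paper proceeds through the variation-of-constants formula (Lemma~\ref{lem6new}): it integrates $\int e^{2\delta_1(t-\tau)}u_2(\tau)\dot\gamma(\tau)\,d\tau$ by parts, bounds $\gamma^2$ by $2V_1$, and closes with Gronwall--Bellman (Lemma~\ref{lem5-}); the outcome is the bound $\mathcal{V}_1$ of Lemma~\ref{lem7}, which carries an exponential factor $e^{\beta_1 T}$ with $\beta_1 \propto \alpha_2^{1/3}V_2(t_o)$.  Your direct integration --- possible because $u_2$ is held fixed --- avoids that exponential entirely and gives the cleaner affine estimate you describe.  The remaining assembly into SGB/PSGA/SGUB is then essentially identical to the paper's.

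One caution.  Your claim that ``$u_2$ is held constant on $[t_{2k},t_{2k+1})$ by \eqref{asscond2}'' does not follow from \eqref{asscond2} alone: that condition only asserts $\frac{d}{dt}\kappa_2(\,\cdot\,,v_{xxx}(Y,t))=0$ for \emph{almost every} $t$, and the explicit $\kappa_2$ of Lemma~\ref{lemdesu2} depends on $\text{sign}(v_{xxx}(Y))$, so it is merely piecewise constant and may jump.  Constancy requires in addition that $u_2$ be absolutely continuous.  The paper faces the same issue --- the integration by parts in Lemma~\ref{lem6new} needs $u_2$ absolutely continuous, and this is stated there as an explicit hypothesis --- so under the paper's own regularity assumptions your argument is legitimate, and noticeably shorter.
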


\ifitsdraft

\subsection{Proof of Theorem \ref{thm2}}

In this section, we illustrate the key intermediate steps  to prove Theorem \ref{thm2}.   Furthermore,  we point out the particular step preventing us from guaranteeing the $\mathcal{L}^2$-PS.    

According to \eqref{eqonI1} and \eqref{eqonI2},  to analyze the 
$\mathcal{L}^2$ properties of $\Sigma^{cl}_1$ in Theorem \ref{thm2},   it is enough to analyze the same properties for the switched differential inequality: 
\begin{equation*}
\begin{aligned}
\Sigma^b_3: 
\left\{
\begin{matrix}
\begin{matrix}
\dot{V}_1 =  - \alpha_1 V_1
\\
\dot{V}_2 =  2 \delta_1 V_2
\end{matrix}
\qquad \quad \forall t \in I_1
\\
~~
\\
\begin{matrix}
\dot{V}_1 \leq  2 \delta_1 V_1 + \kappa_2 \dot{\gamma}  + C \kappa_2^2
\\
\dot{V}_2 \leq   - \alpha_2 V_2^3 + 2 \delta_1 V_2 
\end{matrix}
\qquad \forall t \in I_2,
\end{matrix}
\right.
\end{aligned}
\end{equation*}
where $(V_1,V_2) \in \mathbb{R}_{\geq 0} \times \mathbb{R}_{\geq 0}$ and 
\begin{align} \label{eqV1gamma}
V_1 := \frac{1}{2}\int^{Y}_{0} w(x)^2 dx \quad \text{and} \quad \gamma := \int^{Y}_{0} w(x) dx. 
\end{align}

Now,  we introduce the following lemma.
\begin{lemma} \label{lem8}
Given $\delta_1 \in \mathbb{R}$ and a positive constant  
$\alpha_2$. Then, there exists $b_o > 0$ such that,  for each $\pi \geq 0$, 
\begin{align} \label{eqlem81}  
\pi^3 -  2 \delta_1  \alpha_2^{-\frac{1}{3}}  \pi  \geq  \pi - b_o.
\end{align}
\end{lemma}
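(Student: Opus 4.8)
The plan is to absorb the fixed coefficient into a single real parameter and reduce the claim to the elementary fact that a cubic with negative leading coefficient is bounded above on $[0,\infty)$. First I would set $c := 2\delta_1 \alpha_2^{-1/3}$, which is a fixed real number of either sign (depending on the sign of $\delta_1$, since $\alpha_2^{-1/3}>0$). With this notation, inequality \eqref{eqlem81} is equivalent to
\[
\pi^3 - (c+1)\pi + b_o \geq 0 \qquad \forall \pi \geq 0,
\]
so it suffices to find $b_o > 0$ with $b_o \geq h(\pi)$ for every $\pi \geq 0$, where $h(\pi) := (c+1)\pi - \pi^3$.

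Next I would argue that $h$ attains a finite maximum on $[0,\infty)$. Since $h$ is continuous and $h(\pi) \to -\infty$ as $\pi \to +\infty$ (the term $-\pi^3$ dominates), the supremum $M := \sup_{\pi \geq 0} h(\pi)$ is finite and attained. Locating it via the critical-point equation $h'(\pi) = (c+1) - 3\pi^2 = 0$ splits into two cases. If $c+1 \leq 0$, then $h' \leq 0$ on $[0,\infty)$, so $h$ is nonincreasing and $M = h(0) = 0$. If $c+1 > 0$, the maximizer is $\pi^\star = \sqrt{(c+1)/3}$, and evaluating yields $M = 2\bigl((c+1)/3\bigr)^{3/2} > 0$.

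Finally I would take $b_o := M + 1$ (equivalently, any $b_o > \max\{M,0\}$), which is strictly positive in both cases and satisfies $b_o \geq h(\pi)$ for all $\pi \geq 0$; rearranging recovers \eqref{eqlem81}. There is essentially no obstacle here: the statement is a one-line real-analysis fact, and the only points requiring a little care are verifying that $b_o$ can be chosen \emph{strictly} positive even in the degenerate case $M=0$ (which the ``$+1$'' guarantees), and handling the sign of $c+1$ correctly when identifying the maximizer.
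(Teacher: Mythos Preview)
Your proof is correct and follows essentially the same approach as the paper: both reduce the inequality to bounding the cubic $(c+1)\pi - \pi^3$ (equivalently, minimizing $f(\pi)=\pi^3-(c+1)\pi$) via the critical point $\pi^\star=\sqrt{(c+1)/3}$. Your version is in fact tidier, since you explicitly handle the case $c+1\le 0$ and add $+1$ to guarantee $b_o>0$, two points the paper's proof glosses over.
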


According to Lemma \ref{lem8},  to analyze the switched differential inequality $\Sigma^b_3$,   it is enough to analyze 
\begin{equation*}
\begin{aligned}
\Sigma_4: 
\left\{
\begin{matrix}
\begin{matrix}
\dot{V}_1 =  - \alpha_1 V_1
\\
\dot{V}_2 =  2 \delta_1 V_2
\end{matrix}
\qquad \quad \forall t \in I_1,
\\
~~
\\
\begin{matrix}
\dot{V}_1 \leq  2 \delta_1 V_1 + u_2 \dot{\gamma}  + C u_2^2
\\
\dot{V}_2 \leq   - \alpha_2^{\frac{1}{3}} V_2 + b_o
\end{matrix}
\qquad \forall t \in I_2,
\end{matrix}
\right.
\end{aligned}
\end{equation*}
where $(V_1,V_2) \in \mathbb{R}_{\geq 0} \times \mathbb{R}_{\geq 0}$  and $V_1$ and $\gamma$ are related via \eqref{eqV1gamma}.  

 Next,  we introduce the scalar (infinite-dimensional) differential inequality  
\begin{align*}
\Sigma_5 : \dot{V}_1  \leq 2 \delta_1 V_1 + u_2(t,t_o) 
\dot{\gamma} + C u_2(t,t_o)^2,
\end{align*}
where $u_2 : \mathbb{R}_{\geq 0} \times  \mathbb{R}_{\geq 0}  \rightarrow \mathbb{R}$ is an external signal,
$\delta_1 \in \mathbb{R}$ and $C>0$ are constants,  and $V_1$ and $\gamma$ are related via \eqref{eqV1gamma}.  

The following result establishes an upperbound on the solution $V_1$ to $\Sigma_5$.  
 
\begin{lemma} \label{lem6new}
Consider system $\Sigma_5$ and assume that $u_2$ is absolutely continuous.   Let $V_1$ be a solution to $\Sigma_5$ starting from $V_1(t_o)$ at $t_o \geq 0$. Then, for all $t \geq t_o$,  we have   
\begin{align}  \label{eqV1++}
 V_1(t) \leq ~ & \alpha(t,t_o) + 
\int^{t}_{t_o} \alpha(\tau,t_o) \beta(t,\tau, t_o) 
e^{ \int^{t}_{\tau}  \beta(t,s,t_o) d s } d\tau, 
\end{align}
where 
\begin{align*}
\beta(t,\tau,t_o)  := 2 \bigg|
\frac{d}{d\tau} g(t,\tau,t_o) \bigg|,  \quad 
 g(t,\tau,t_o)  :=  e^{2\delta_1(t-\tau)} u_2(\tau,t_o),  
\end{align*}
\begin{align*}
\alpha(t,t_o) & :=  2 e^{2\delta_1 (t-t_o)} V_1(t_o) 
+  2 C \int^{t}_{t_o} u_2(\tau,t_o) g(t,\tau,t_o) d\tau 
\\ &
+ 2 u_2(t,t_o)^2 + 2 |g(t,t_o,t_o)| 
\sqrt{V_1(t_o)}  + \frac{1}{2}
 \int^{t}_{t_o}  \beta(t,\tau,t_o)   d\tau.
 \end{align*}
\end{lemma}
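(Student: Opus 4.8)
The plan is to treat $\Sigma_5$ as a scalar linear differential inequality in $V_1$ with the exogenous forcing $u_2(t,t_o)\dot\gamma(t)+C\,u_2(t,t_o)^2$, and to convert it into a Volterra integral inequality to which a Gr\"onwall--Bellman argument applies. First I would apply the integrating factor $e^{-2\delta_1\tau}$ to $\dot V_1\leq 2\delta_1 V_1+u_2\dot\gamma+C u_2^2$ and integrate from $t_o$ to $t$, obtaining
\[
V_1(t)\leq e^{2\delta_1(t-t_o)}V_1(t_o)+\int_{t_o}^{t}g(t,\tau,t_o)\dot\gamma(\tau)\,d\tau+C\int_{t_o}^{t}u_2(\tau,t_o)\,g(t,\tau,t_o)\,d\tau,
\]
where $g(t,\tau,t_o)=e^{2\delta_1(t-\tau)}u_2(\tau,t_o)$ is exactly the kernel introduced in the statement. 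The last integral already reproduces the term $2C\int u_2 g\,d\tau$ appearing in $\alpha$ (up to the uniform factor explained below), so the whole difficulty is concentrated in the middle term carrying $\dot\gamma$.

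The crucial manipulation is to move the time derivative off $\gamma$ by integration by parts in $\tau$, using that $u_2$ (hence $g$) is absolutely continuous:
\[
\int_{t_o}^{t}g\,\dot\gamma\,d\tau=u_2(t,t_o)\gamma(t)-g(t,t_o,t_o)\gamma(t_o)-\int_{t_o}^{t}\frac{\partial g}{\partial\tau}(t,\tau,t_o)\,\gamma(\tau)\,d\tau.
\]
Each of the three pieces is then controlled through the Cauchy--Schwarz bound $\gamma(\tau)^2\leq 2Y\,V_1(\tau)$ coming from \eqref{eqV1gamma}, together with Young's inequality. The terminal boundary term is the delicate one: writing $u_2(t,t_o)\gamma(t)\leq u_2(t,t_o)^2+\tfrac14\gamma(t)^2\leq u_2(t,t_o)^2+c\,V_1(t)$ with $c<1$ produces a multiple of $V_1(t)$ on the \emph{same} side as the quantity being estimated; absorbing this $c\,V_1(t)$ into the left-hand side and dividing by $1-c$ is precisely the mechanism generating the uniform factor (equal to $2$ in the stated normalization) that multiplies every summand of $\alpha$. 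The term at $t_o$ gives the contribution $2|g(t,t_o,t_o)|\sqrt{V_1(t_o)}$, while the remaining integral, bounded via $|\partial_\tau g|=\tfrac12\beta$ and $\sqrt{V_1}\leq\tfrac12(1+V_1)$, splits into the constant part $\tfrac12\int_{t_o}^{t}\beta\,d\tau$ (entering $\alpha$) and a genuinely linear part of the form $\int_{t_o}^{t}\beta(t,\tau,t_o)V_1(\tau)\,d\tau$. Collecting everything yields the Volterra inequality $V_1(t)\leq\alpha(t,t_o)+\int_{t_o}^{t}\beta(t,\tau,t_o)V_1(\tau)\,d\tau$.

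It remains to solve this inequality, which is where I expect the main obstacle to lie: the kernel $\beta(t,\tau,t_o)$ depends on the \emph{terminal} time $t$ through the factor $e^{2\delta_1(t-\tau)}$, so the classical Gr\"onwall--Bellman lemma does not apply verbatim. My plan is to normalize by the same integrating factor, setting $\widehat V(\tau):=e^{-2\delta_1\tau}V_1(\tau)$. A direct computation gives $\partial_\tau g=e^{2\delta_1(t-\tau)}\bigl(\dot u_2-2\delta_1 u_2\bigr)$, so that the rescaled kernel $\widehat\beta(\tau):=2\,|\dot u_2(\tau,t_o)-2\delta_1 u_2(\tau,t_o)|$ no longer depends on $t$, and the inequality becomes $\widehat V(t)\leq e^{-2\delta_1 t}\alpha(t,t_o)+\int_{t_o}^{t}\widehat\beta(\tau)\widehat V(\tau)\,d\tau$, a genuine Volterra inequality with a kernel depending on $\tau$ alone. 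The classical Gr\"onwall--Bellman lemma now produces a bound on $\widehat V$, and undoing the rescaling reproduces \eqref{eqV1++}, since $e^{2\delta_1 t}\widehat\beta(\tau)=\beta(t,\tau,t_o)$ recombines the terminal-time factor into the stated $\beta$.

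The residual subtlety concerns only the exponent: the rescaled argument yields $\exp\!\bigl(\int_\tau^{t}\widehat\beta(s)\,ds\bigr)$, whereas \eqref{eqV1++} carries the larger $\exp\!\bigl(\int_\tau^{t}\beta(t,s,t_o)\,ds\bigr)$; the two agree up to the factor $e^{2\delta_1(t-s)}$, so for $\delta_1\geq 0$ the stated bound follows immediately as a valid over-estimate (using $e^{2\delta_1(t-s)}\geq 1$), and the general sign of $\delta_1$ is absorbed by invoking a generalized Gr\"onwall lemma for Volterra kernels depending on the upper limit. The bulk of the remaining effort is routine bookkeeping: verifying that the Cauchy--Schwarz and Young constants assemble precisely into the five summands of $\alpha$ and into $\beta$. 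The genuinely non-routine point, and the step I would guard most carefully, is the terminal-time dependence of the kernel, resolved by the integrating-factor normalization above.
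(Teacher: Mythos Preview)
Your argument follows the paper's proof essentially step for step: integrating factor, integration by parts to shift the time derivative off $\gamma$, Young-type bounds on the boundary and integral terms combined with $\gamma^2\lesssim V_1$, absorption of the resulting $V_1(t)$ contribution into the left-hand side (producing the overall factor~$2$), and finally Gr\"onwall--Bellman. Your additional worry about the terminal-time dependence of $\beta(t,\tau,t_o)$ is in fact a subtlety the paper does not address---it simply invokes its Lemma~\ref{lem5-} as if the kernel depended on $\tau$ alone---so your rescaling $\widehat V(\tau)=e^{-2\delta_1\tau}V_1(\tau)$ is a legitimate tightening of the argument rather than a departure from it.
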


\begin{proof}
By solving $\Sigma_5$, we obtain 
\begin{align*}
V_1(t) = & ~ e^{2\delta_1 (t-t_o)} V_1(t_o) +  
C \int^{t}_{t_o} u_2(\tau,t_o)^2 e^{2\delta_1(t-\tau)} d\tau 
 + \int^{t}_{t_o} e^{2\delta_1 (t-\tau)} u_2(\tau,t_o) 
\dot{\gamma}(\tau) d\tau.
\end{align*}
Using the integration by part, we obtain 
\begin{align*}
V_1(t) = ~ & e^{2\delta_1 (t-t_o)} V_1(t_o) +  
C \int^{t}_{t_o} u_2(\tau,t_o)^2 e^{2\delta_1(t-\tau)} d\tau 
\\ & 
+ \left[e^{2\delta_1 (t-\tau)} u_2(\tau,t_o)  
\gamma(\tau)\right]^{t}_{t_o}
 -\int^{t}_{t_o} \frac{d}{d\tau} 
\left[e^{2\delta_1 (t-\tau)} u_2(\tau,t_o) \right] \gamma(\tau) d\tau.
\end{align*}
Hence, we obtain
\begin{align*}
 V_1(t) = ~ & e^{2\delta_1 (t-t_o)} V_1(t_o) + 
C \int^{t}_{t_o} u_2(\tau,t_o)^2 e^{2\delta_1(t-\tau)} d\tau 
\\ & +
\left[u_2(t,t_o) \gamma(t)- e^{2\delta_1 (t-t_o)} u_2(t_o,t_o)  
\gamma(t_o) \right] 
 \\ & -
\int^{t}_{t_o} \frac{d}{d\tau} 
\left[e^{2\delta_1(t-\tau)} u_2(\tau,t_o) \right] \gamma(\tau) d\tau.
\end{align*}
Next, we let $ g(t,\tau,t_o) :=   e^{2\delta_1(t-\tau)} u_2(\tau,t_o)$,
to obtain 
\begin{align*}
 V_1(t) = ~ & e^{2\delta_1 (t-t_o)} V_1(t_o) + 
C \int^{t}_{t_o} u_2(\tau,t_o) g(t,\tau,t_o) d\tau 
\\ & +
\left[ g(t,t,t_o) \gamma(t)- g(t,t_o,t_o)  \gamma(t_o) \right] 
 -
\int^{t}_{t_o} \frac{d}{d\tau} 
\left[g(t,\tau,t_o) \right] \gamma(\tau) d\tau. 
\end{align*}
Now, we upperbound the term 
$$ \left[ g(t,t,t_o) \gamma(t)- g(t,t_o,t_o)  \gamma(t_o) \right]$$ 
using the inequality
\begin{equation}
\label{eqrough}
\begin{aligned} 
g(t,t,t_o) \gamma(t)-  g(t,t_o,t_o)  \gamma(t_o)  \leq   \left[ g(t,t,t_o)^2 + \frac{1}{4} \gamma(t)^2 \right]   + \big|g(t,t_o,t_o) \big| 
\sqrt{V_1(t_o)}. 
\end{aligned}
\end{equation}
As a result,  we obtain  
\begin{equation*}
\begin{aligned} 
 V_1(t) &  \leq 
e^{2\delta_1 (t-t_o)} V_1(t_o) + 
C \int^{t}_{t_o} u_2(\tau,t_o) g(t,\tau,t_o) d\tau 
\\ & +
\left[ g(t,t,t_o)^2 + \frac{1}{4} \gamma(t)^2 \right] + |g(t,t_o,t_o)| 
\sqrt{V_1(t_o)}  \\ & +
\int^{t}_{t_o} \bigg|
\frac{d}{d\tau} g(t,\tau,t_o) \bigg|  \left[ \frac{1}{2} 
|\gamma(\tau)|^2  + \frac{1}{2} \right]  d\tau.
\end{aligned}
\end{equation*}
Next, we use the fact that 
$\frac{1}{2} \gamma^2 \leq V_1$, we obtain 
\begin{align*}
\frac{1}{2} V_1(t) & \leq 
e^{2\delta_1 (t-t_o)} V_1(t_o) + 
C \int^{t}_{t_o} u_2(\tau,t_o) g(t,\tau,t_o) d\tau 
 + g(t,t,t_o)^2 \\ & + |g(t,t_o,t_o)| 
\sqrt{V_1(t_o)}  + 
\frac{1}{2} \int^{t}_{t_o} \bigg|
\frac{d}{d\tau} g(t,\tau,t_o) \bigg|   d\tau 
 + \int^{t}_{t_o} \bigg|
\frac{d}{d\tau} g(t,\tau,t_o) \bigg| 
 V_1(\tau) d\tau.
\end{align*}
Next, we take 
\begin{align*}
 \alpha(t,t_o) &  := 2 e^{2\delta_1 (t-t_o)} V_1(t_o) 
 + 2 C \int^{t}_{t_o} u_2(\tau,t_o) g(t,\tau,t_o) d\tau 
\\ & +
2 g(t,t,t_o)^2 + 2 |g(t,t_o,t_o)| 
\sqrt{V_1(t_o)}   +  \int^{t}_{t_o} \bigg|
\frac{d}{d\tau} g(t,\tau,t_o) \bigg|   d\tau
\end{align*}
and $\beta(t,\tau,t_o) := 2 \bigg|\frac{d}{d\tau} g(t,\tau,t_o) \bigg|$. 
As a result, we have 
\begin{align*}
V_1(t) \leq \alpha(t,t_o) + \int^{t}_{t_o} \beta(t,\tau,t_o) V_1(\tau) 
d\tau.
\end{align*} 
Hence, using Lemma \ref{lem5-},  \eqref{eqV1++} follows.  
\end{proof}

\begin{remark} \label{rem5}
We note that $\alpha$ is an upper bound on the solution $V_1$ to the differential inequality  
\begin{align*}
 \dot{V}_1  \leq 2 \delta_1 V_1 + u_2(t) 
\dot{\gamma}.
\end{align*}
This upperbound is not tight enough to conclude that $ \lim_{t \rightarrow t_o} \alpha(t,t_o)  \leq \kappa (V_1(t_o))$,  for some class $\mathcal{K}$ function $\kappa$.  Indeed, we can see that 
$$ \lim_{t \rightarrow t_o} \alpha(t,t_o) = 2 V_1(t_o) + 2 u_2(t_o,t_o)^2 + 2 |u_2(t_o,t_o)| \sqrt{V_1(t_o)}.  $$
 This restrictive character is due to the way we upper-bounded the term 
 $$ \left[g(t,t,t_o) \gamma(t) - g(t,t_o,t_o) \gamma(t_o) \right]$$
  in the proof of Lemma \ref{lem6new}.  
 
A less na\"{i}f approach to upperbound the term $g(t,t) \gamma(t)- g(t,t_o)  \gamma(t_o)$ consists in using the following inequality 
\begin{equation}
\label{eqlessrough}
\begin{aligned} 
 g(t,t) \gamma(t)- g(t,t_o)  \gamma(t_o) & \leq 
  \gamma(t) [g(t,t) - g(t,t_o)] 
 + g(t,t_o) [ \gamma(t) - \gamma(t_o) ] 
\\   & \leq  \frac{V_1(t)}{2}  + |g(t,t) - g(t,t_o)|^2  + |g(t,t_o)| |\gamma(t) - \gamma(t_o)|.  
\end{aligned}
\end{equation}
Note that the right-hand side in \eqref{eqlessrough} converges to zero as $t \rightarrow t_o$. However,  we still need to upper bound the term 
$|\gamma(t) - \gamma(t_o)|$ by a function that goes to zero as 
$t \rightarrow t_o$. Furthermore, to pursue the computations after \eqref{eqlessrough}, the upper bound of the term $|\gamma(t) - \gamma(t_o)|$ needs to be function of $(t,t_o,V_1(t_o))$ only.  
Unfortunately, the latter is hard to reach. In particular, one could think of using the classical mean-value theorem, which gives 
$ |\gamma(t) - \gamma(t_o)| \leq \sup_{\tau \in [t_o,t]} |\dot{\gamma}(\tau)|(t-t_o)$.  Next, using the fact that 
\begin{align*}
 \dot{\gamma}  =  \int^{Y}_{0} \hspace{-0.2cm} w_t(x) dx  = & -  \int^{Y}_{0} \hspace{-0.2cm} w(x) w_x(x) dx - \lambda_1  \int^{Y}_{0} \hspace{-0.2cm} w_{xx}(x) dx   -  \int^{Y}_{0} \hspace{-0.2cm} w_{xxxx}(x) dx.
\end{align*}
Furthermore, assuming that $[t_o,t] \subset I_2$, we conclude that
$w(Y) = w(0)$ and $w_x(Y) = w_x(0) = 0$. Hence,
\begin{align*}
 \dot{\gamma}  = & -  \int^{Y}_{0} w_{xxxx}(x) dx  =  w_{xxx}(0)-  w_{xxx}(Y).
\end{align*}
As a result, we conclude that
\begin{align*}
\hspace{-0.4cm} |\gamma(t) - \gamma(t_o)| \leq \sup_{\tau \in [t_o,t]} \bigg|w_{xxx}(0,\tau)-  w_{xxx}(Y,\tau) \bigg|
(t-t_o). 
\end{align*}
Note that we are not making any boundedness assumption on 
$w_{xxx}$. Moreover,  we are not aware of any method allowing to conclude boundedness of $w_{xxx}$ without restricting the set of solutions to $\Sigma_2$. In conclusion, we are not able to find a more suitable upperbound of the term $g(t,t) \gamma(t)- g(t,t_o)  \gamma(t_o)$ than the one in \eqref{eqrough}.
\end{remark}

Next, we provide an explicit upperbound of a solution $V_1$ to 
$\Sigma_5$,  when  $u_2 = \kappa_2  \left( \alpha_2^{\frac{1}{3}} V_2(t_o),  v_{xxx}(Y) \right) $,  where $\kappa_2$ is introduced in Lemma \ref{lemdesu2},  and when,  for some $b_o>0$,  $V_2$ satisfies
\begin{align} \label{dotV2Lin} 
\dot{V_2}  \leq  - \alpha_2^{\frac{1}{3}} V_2 + b_o \qquad V_2  \geq b_o \alpha_2^{-\frac{1}{3}}.
\end{align} 

\begin{lemma} \label{lem7}
Consider system $\Sigma_5$  and assume that $u_2 :=   \kappa_2  \left( \alpha_2^{\frac{1}{3}} V_2(t_o),  v_{xxx}(Y) \right)  $, $V_2$ satisfies \eqref{dotV2Lin}, and $t \mapsto v_{xxx}(Y,t)$ is absolutely continuous. Then, there exists $M_1, M_2 : \mathbb{R}_{\geq 0} \rightarrow \mathbb{R}_{\geq 0}$ continuous and non-decreasing such that,  for all $t \in [t_o, t_o + T]$, we have 

\begin{equation}
\label{eqV1upbd}
\begin{aligned} 
 V_1(t)  \leq   \mathcal{V}_1 \left(T, V_1(t_o), \alpha_2^{\frac{1}{3}} V_2(t_o) \right),
\end{aligned}
\end{equation} 
 where 
\begin{align*} 
\mathcal{V}_1 \left(\cdot \right) & := T  \bar{\alpha}(t_o, T)  \beta_{1}(t_o, T)  e^{ \beta_{1}(t_o, T) T }   + \bar{\alpha}(t_o, T),
\\
  \bar{\alpha}(t_o, T) & :=  (2+P) e^{2\delta_1 T} V_1(t_o)   +  M_1(T)  \left[ \alpha_2^{\frac{2}{3}} V_2(t_o)^2  + \alpha_2^{\frac{1}{3}} V_2(t_o)  \right],
\\  
\beta_{1}(t_o, T) & :=  M_2(T)  \alpha_2^{\frac{1}{3}} V_2(t_o).
\end{align*} 
\end{lemma}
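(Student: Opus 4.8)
The plan is to specialize the general estimate \eqref{eqV1++} of Lemma \ref{lem6new} to the feedback $u_2 := \kappa_2\!\left(\alpha_2^{1/3}V_2(t_o),v_{xxx}(Y,\cdot)\right)$ and then collapse the two quantities $\alpha(t,t_o)$ and $\beta(t,\tau,t_o)$ appearing there into expressions depending only on $T:=t-t_o$, $V_1(t_o)$, and $\alpha_2^{1/3}V_2(t_o)$. The two structural facts driving everything are the conclusions of Property \ref{ass2}, guaranteed by the design in Lemma \ref{lemdesu2}: the magnitude bound \eqref{asscond1}, $|u_2(\tau,t_o)|\le P\alpha_2^{1/3}V_2(t_o)$, and the vanishing-derivative property \eqref{asscond2}, $\tfrac{d}{dt}u_2(\cdot,t_o)=0$ for almost all $t$. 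The latter is available because, with its first argument frozen at $V_2(t_o)$, the map $\kappa_2$ is piecewise constant in $v_{xxx}(Y)$, so composing it with an absolutely continuous $t\mapsto v_{xxx}(Y,t)$ yields a signal whose time derivative is zero almost everywhere.

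First I would treat the kernel $\beta$. Since $g(t,\tau,t_o)=e^{2\delta_1(t-\tau)}u_2(\tau,t_o)$, the product rule gives $\tfrac{d}{d\tau}g=-2\delta_1 e^{2\delta_1(t-\tau)}u_2(\tau,t_o)+e^{2\delta_1(t-\tau)}\dot u_2(\tau,t_o)$, and \eqref{asscond2} eliminates the second summand for almost every $\tau$. Hence $\beta(t,\tau,t_o)=4|\delta_1|e^{2\delta_1(t-\tau)}|u_2(\tau,t_o)|$, and \eqref{asscond1} gives $\beta(t,\tau,t_o)\le 4|\delta_1|P e^{2|\delta_1|T}\alpha_2^{1/3}V_2(t_o)$. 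Taking $M_2(T):=4|\delta_1|P e^{2|\delta_1|T}$, which is continuous and non-decreasing, reproduces exactly $\beta(t,\tau,t_o)\le\beta_1(t_o,T)=M_2(T)\alpha_2^{1/3}V_2(t_o)$.

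Next I would bound $\alpha(t,t_o)$ term by term. By \eqref{asscond1} and the bound just obtained on $\beta$, the contributions $2C\int_{t_o}^t u_2\,g\,d\tau$, $2u_2(t,t_o)^2$ and $\tfrac12\int_{t_o}^t\beta\,d\tau$ are each dominated by a continuous non-decreasing function of $T$ multiplied by $\alpha_2^{2/3}V_2(t_o)^2$ or $\alpha_2^{1/3}V_2(t_o)$. The delicate term is the cross term $2|g(t,t_o,t_o)|\sqrt{V_1(t_o)}$: using $|g(t,t_o,t_o)|\le e^{2\delta_1 T}P\alpha_2^{1/3}V_2(t_o)$ together with Young's inequality $2\alpha_2^{1/3}V_2(t_o)\sqrt{V_1(t_o)}\le \alpha_2^{2/3}V_2(t_o)^2+V_1(t_o)$, it is bounded by $P e^{2\delta_1 T}\big(\alpha_2^{2/3}V_2(t_o)^2+V_1(t_o)\big)$; its $V_1(t_o)$ part then combines with the leading term $2e^{2\delta_1 T}V_1(t_o)$ to give precisely the coefficient $(2+P)e^{2\delta_1 T}$. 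Absorbing the remaining $T$-dependent constants into a single continuous non-decreasing $M_1$ yields $\alpha(t,t_o)\le\bar\alpha(t_o,T)$, the quantity in the statement.

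Finally I would insert both bounds into \eqref{eqV1++}. Since $\beta(t,s,t_o)\le\beta_1(t_o,T)$, the inner exponent satisfies $\int_\tau^t\beta(t,s,t_o)\,ds\le\beta_1(t_o,T)\,T$; bounding $\alpha(\tau,t_o)\le\bar\alpha(t_o,T)$ for $\tau\in[t_o,t]$ then makes the integral at most $T\,\bar\alpha(t_o,T)\beta_1(t_o,T)e^{\beta_1(t_o,T)T}$, and adding the leading $\alpha(t,t_o)\le\bar\alpha(t_o,T)$ reproduces $\mathcal V_1$, hence \eqref{eqV1upbd}. I expect the main obstacle to be twofold. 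The first and most important is the justification that $\dot u_2=0$ almost everywhere, so that $\beta$ remains linear in $\alpha_2^{1/3}V_2(t_o)$; without it the term $e^{2\delta_1(t-\tau)}\dot u_2$ would be uncontrolled, which is exactly the difficulty flagged in Remark \ref{rem5} for the $\dot\gamma$ coupling. The second is performing the Young split on the cross term so that the $V_1(t_o)$ coefficient emerges as the clean $(2+P)e^{2\delta_1 T}$ while the remainder is a purely $V_2(t_o)$-dependent quantity times a non-decreasing function of $T$. The uniform bound $\alpha(\tau,t_o)\le\bar\alpha(t_o,T)$ over the subinterval needs a little care regarding the sign of $\delta_1$; it is immediate in the destabilizing regime $\delta_1\ge 0$ relevant to large $\lambda_1$, where $\bar\alpha(t_o,\cdot)$ is non-decreasing, and otherwise follows after enlarging $M_1$.
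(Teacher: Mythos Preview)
Your proposal is correct and follows essentially the same route as the paper's proof: specialize Lemma~\ref{lem6new}, use \eqref{asscond2} to kill $\dot u_2$ in $\tfrac{d}{d\tau}g$, use \eqref{asscond1} to bound $|u_2|$ by $P\alpha_2^{1/3}V_2(t_o)$, split the cross term $2|g(t,t_o,t_o)|\sqrt{V_1(t_o)}$ via Young to isolate the $(2+P)e^{2\delta_1 T}V_1(t_o)$ coefficient, and then insert the resulting uniform bounds $\alpha\le\bar\alpha$, $\beta\le\beta_1$ into \eqref{eqV1++}. The paper carries out exactly these steps (with $2\delta_2$ in place of your $4|\delta_1|$, which coincide since $\delta_2=2|\delta_1|$), and handles your monotonicity caveat by defining $M_1,M_2$ as suprema of the relevant $T$-dependent factors over $[0,T]$, which is the ``enlarging $M_1$'' you anticipate at the end.
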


\begin{proof}
To prove Lemma \ref{lem7}, we find an adequate upperbound on the functions $\alpha$ and $\beta$ used in  Lemma \ref{lem6new}.   For this,  we use the following inequalities:
\begin{align*}
u_2(t,t_o)^2 & \leq P^2 \alpha_2^{\frac{2}{3}} V_2(t_o)^2, 
\quad
|g(t,t_o)|  \leq  e^{2 \delta_1 (t-t_o)} P |\alpha_2^{\frac{1}{3}} V_2(t_o)|,
\\ 
2 C \int^{t}_{t_o} u_2(\tau) g(t,\tau) d\tau & =  2 C \int^{t}_{t_o} u_2(\tau)^2 e^{2 \delta_1 (t-\tau)}  d\tau 
 \leq  2 C e^{2 \delta_1 (t-t_o)} \int^{t}_{t_o} u_2(\tau)^2  d\tau 
\\ & \leq  2 C e^{2 \delta_1 (t-t_o)} (t - t_o) P^2  \alpha_2^{\frac{2}{3}} V_2(t_o)^2, 
\end{align*}
and 
$$ \bigg| \frac{d}{d\tau} g(t,\tau) \bigg|  \leq 
 2 \delta_2 e^{2\delta_1(t-\tau)} |u_2(\tau)|   \leq 2 \delta_2 e^{2\delta_1(t-\tau)}P \alpha_2^{\frac{1}{3}} V_2(t_o). $$ 

As a result, we have 
\begin{align*}
 \int^{t}_{t_o} \bigg| \frac{d}{d\tau} g(t,\tau) \bigg|   d\tau   \leq  e^{2\delta_1(t-t_o)} 2 (t - t_o) \delta_2 P  \alpha_2^{\frac{1}{3}} V_2(t_o).
\end{align*}

As a consequence, we derive the following upperbounds on $\alpha$ and $\beta$. 
\begin{align*}
\alpha(t,t_o) & \leq  (2 + P) e^{2\delta_1 (t-t_o)} V_1(t_o)  
 +
2 C e^{2 \delta_1 (t-t_o)} P^2 (t-t_o) 
\alpha_2^{\frac{2}{3}} V_2(t_o)^2   \\ & +
2 P^2  \alpha_2^{\frac{2}{3}} V_2(t_o)^2  +  e^{2 \delta_1 (t-t_o)} P   \alpha_2^{\frac{2}{3}} V_2(t_o)^2
\\ & + P \delta_2  \exp^{2\delta_1(t-t_o)} (t-t_o) \alpha_2^{\frac{1}{3}} V_2(t_o).
\end{align*}
Next, we introduce the following functions:
\begin{align*}
g_1(t-t_o) & :=  (2 + P) e^{2\delta_1 (t-t_o)}, 
\\
g_2\left( t-t_o \right) & :=
2 C e^{2 \delta_1 (t-t_o)} P^2  (t - t_o)  +
2 P^2  +  e^{2 \delta_1 (t-t_o)} P,
\\ 
g_3(t-t_o) & := P \delta_2  \exp^{2\delta_1(t-t_o)} (t-t_o).
\end{align*}
Hence, the upperbound on $\alpha$ can be expressed as follows 
\begin{align*}
\alpha(t,t_o) & \leq  g_1 (t-t_o) V_1(t_o) + g_2(t-t_o)  \alpha_2^{\frac{2}{3}} V_2(t_o)^2   + g_3(t-t_o)  \alpha_2^{\frac{1}{3}} V_2(t_o).
\end{align*}

Now, to propose an adequate upperbound  on $\beta$, we note that 
\begin{align*}
\beta(t,\tau,t_o) := 2  \bigg|
\frac{d}{d\tau} g(t,\tau,t_o) \bigg|   \leq  4 \delta_2 P e^{2\delta_1(t-\tau)}   \alpha_2^{\frac{1}{3}} V_2(t_o).
\end{align*}
Next, we let the functions 
\begin{align*}
h_1\left( t- \tau \right)  :=~  & 4 \delta_2 P e^{2\delta_1(t-\tau)}.
\end{align*} 
Hence, the upperbound on $\beta$ can be expressed as follows:
\begin{align*}
\beta(t,\tau,t_o)  \leq   h_1 \left(t- \tau \right) \alpha_2^{\frac{1}{3}} V_2(t_o). 
\end{align*}

Finally, to  conclude the proposed upperbound on $V_1$, we choose the functions $M_1$ and $M_2$, independent of $a$, as follows:
\begin{align*}
M_1(T) & := \sup  \left\{g_{2,3} \left( \tau \right) :  \tau \in [0,T]  \right\},
\\
M_2(T) & := \sup  \left\{  h_1\left(\tau \right) : \tau \in [0,T] \right\}.
\end{align*}
As a result, for all $t \in [t_o, t_o + T]$, we have 
\begin{align*}
 \alpha(t,t_o) & \leq   g_1(T) V_1(t_o) + M_1(T) \left[  \alpha_2^{\frac{2}{3}} V_2(t_o)^2  + \alpha_2^{\frac{1}{3}} V_2(t_o) \right] =  \bar{\alpha}(t_o,T)
\end{align*} 
and
\begin{align*}
\beta(t,\tau,t_o)  \leq  M_2(T) \left( \alpha_2^{\frac{1}{3}} V_2(t_o)  \right)   =  \beta_1(t_o,T). 
\end{align*}
As a result, an upperbound  on $V_1(t)$ when $t \in [t_o,t_o + T]$ is given by
\begin{align*}
 V_1(t) &  \leq  \alpha(t,t_o) + \int^{t}_{t_o} \alpha(\tau,t_o) \beta(t,\tau,t_o) 
e^{ \left[\int^{t}_{\tau}  \beta(t,s,t_o) ds \right] } d\tau  
 \\ & \leq  \bar{\alpha}(t_o,T) + T
\bar{\alpha}(t_o,T)  \beta_1(t_o,T)  e^{  \beta_1(t_o,T) T }.
\end{align*}
\end{proof}

\begin{remark} \label{rem6}
According to  \eqref{eqV1upbd}, we conclude that 
$$V_1(t_o)  \leq \lim_{T \rightarrow 0}  \mathcal{V}_1 \left(T, V_1(t_o), \alpha_2^{\frac{1}{3}} V_2(t_o) \right).  $$
Note that 
\begin{align*}
\lim_{T \rightarrow 0}   \mathcal{V}_1 \left(T, \cdot \right) & =
\lim_{T \rightarrow 0}  \bar{\alpha}(t_o, T)  
\\ & = (2+P)  V_1(t_o) 
 + \left( \lim_{T \rightarrow 0}  M_1(T) \right)  
\left[ \alpha_2^{\frac{2}{3}} V_2(t_o)^2  + \alpha_2^{\frac{1}{3}} V_2(t_o) \right].
\end{align*} 
Next,  from the proof of Lemma \ref{lem7},  we note that  $\lim_{T \rightarrow 0}  M_1(T) = 4 P^2 + P$. As a result, we obtain
\begin{align*}
 \lim_{T \rightarrow 0}    \mathcal{V}_1 \left(T, \cdot \right)  & = (2+P)  V_1(t_o)  + \left( 4P^2 + P \right)  \left[ \alpha_2^{\frac{2}{3}} V_2(t_o)^2  +  \alpha_2^{\frac{1}{3}} V_2(t_o)  \right].
\end{align*} 
\end{remark} 

\fi

\subsection{Proof of Theorem \ref{thm2}} 
To prove Theorem \ref{thm2},  we show that
 $\Sigma_4$ satisfies the following properties:
 
\begin{enumerate} [label={SGB.},leftmargin=*]
\item \label{item:SGB} For each $\beta>0$, there exists $\gamma >0 $ and  $(\alpha_1,\alpha_2) \in \mathbb{R}_{>0} \times \mathbb{R}_{>0}$ sufficiently large such that, for every solution $W$ with $W(t_o) \leq \beta$,  $W(t_o + t) \leq  \gamma$ for all $t \geq  0$.  
\end{enumerate}

\begin{enumerate} [label={PSGA.},leftmargin=*]
\item \label{item:PSGAt} For each $\beta > \epsilon >0$, there exist $(\alpha_1,\alpha_2) \in \mathbb{R}_{>0} \times 
\mathbb{R}_{>0}$ sufficiently large such that, every solution $W := (V_1,V_2)$ to 
$\Sigma_4$ with $W(t_o) \leq \beta$, there exists $T>0$ such that 
$W(t_o + T) \leq \epsilon$.
\end{enumerate}

\begin{enumerate} [label={SGUB.},leftmargin=*]
\item \label{item:SGUB} There exists $\gamma>0$ such that, for each $\beta>0$, there exists $(\alpha_1,\alpha_2) \in \mathbb{R}_{>0} \times \mathbb{R}_{>0}$ sufficiently large such that, for every solution $W$ with $W(t_o) \leq \beta$, there exists $T>0$ such that 
$W(t_o + t) \leq  \gamma$ for all $t \geq  T$.  
\end{enumerate}

\underline{Proof of  \ref{item:SGB}}
Consider the sequence $\{ t_k \}^{\infty}_{k=1}$ used in the proposed sensing scenario.  Note that,  for all 
$t \in [t_{2k+1}, t_{2k+2}] \subset I_1$,  we have 
\begin{align*}
V_1(t)  \leq  e^{- \alpha_1 (t - t_{2k+1})} V_1(t_{2k+1}),   
 \quad 
V_2(t) =   e^{2 \delta_1 (t - t_{2k+1})} V_2(t_{2k+1}).    
 \end{align*}
Furthermore, for each $t \in [t_{2k}, t_{2k+1}] \subset I_2$,  we use Lemma \ref{lem7} to conclude that
 \begin{align*}
V_2(t) & \leq  e^{-  \alpha_2^{\frac{1}{3}} (t - t_{2k})} V_2(t_{2k}) + b_o \alpha_2^{-\frac{1}{3}}
\left[ 1- e^{- \alpha_2^{\frac{1}{3}}(t-t_{2k})} \right].    
\\
V_1(t) & \leq  \mathcal{V}_1 \left(\bar{T}_2, V_1(t_{2k}), \alpha_2^{\frac{1}{3}} V_2(t_{2k}) \right). 
\end{align*} 
Now, we let $W := V_1 + V_2$ and we note that
\begin{align*} 
 W(t_{2k+2}) &  \leq  
e^{ - \alpha_1 \bar{T}_1} 
\mathcal{V}_1\left(\bar{T}_2, V_1(t_{2k}), \alpha_2^{\frac{1}{3}} V_2(t_{2k}) \right) 
\\ &  +
e^{2 \delta_1 \bar{T}_1 - \alpha_2^{\frac{1}{3}} \bar{T}_2} V_2(t_{2k})  +  b_o e^{2 \delta_1 \bar{T}_1}  \alpha_2^{-\frac{1}{3}} 
\left[ 1- e^{- \alpha_2^{\frac{1}{3}} \bar{T}_2} \right] .   
\end{align*}
Now, we choose $\alpha^{\star}_2$ large enough so that 
$$  2\delta_1 \bar{T}_1 -  \alpha_2^{\star \frac{1}{3}} \bar{T}_2  = -2. $$
Hence,  for each $\alpha_2 \geq \alpha^{\star}_2$, we have 
\begin{equation*}
\begin{aligned} 
  W(t_{2k+2})  &  \leq 
  e^{ - \alpha_1 \bar{T}_1} 
\mathcal{V}_1\left(\bar{T}_2, V_1(t_{2k}), \alpha_2^{\frac{1}{3}} V_2(t_{2k}) \right) 
 \\ & ~
+ e^{- 2} W(t_{2k})  +  b_o e^{2 \delta_1 \bar{T}_1} 
\left[ 1- e^{- \alpha_2^{\frac{1}{3}} \bar{T}_2} \right] \alpha_2^{-\frac{1}{3}}.   
\end{aligned}
\end{equation*}
Now,  since $\mathcal{V}_1$ is independent  on $\alpha_1$, continuous, and non-decreasing  in each of its arguments, we conclude that, for each $\beta > 0$ such that $W(t_{2k})  \leq \beta$ and for each $\alpha_2 \geq \alpha_2^{\star}$, we can find $\alpha^*_1$ such that
\begin{align*}
 \mathcal{V}_1\left(\bar{T}_2, V_1(t_{2k}), \alpha_2^{\frac{1}{3}} V_2(t_{2k}) \right)  \leq e^{\frac{\alpha_1 \bar{T}_1}{2}}   W(t_{2k}).   
\end{align*}
Hence,  for each $\alpha_1 \geq \alpha^*_1$, we have 
\begin{equation}
\label{eqboundperlin}
\begin{aligned}
W(t_{2k+2})  & \leq   \left( e^{- \frac{\alpha_1 \bar{T}_1}{2}}  + 
e^{- 2} \right)  W(t_{2k})   +  e^{2 \delta_1 \bar{T}_1}  b_o  \left[ 1- e^{- \alpha_2^{\frac{1}{3}} \bar{T}_2} \right] \alpha_2^{-\frac{1}{3}}.
 \end{aligned}
 \end{equation}
As a result,  when $(\alpha_1,\alpha_2)$ are sufficiently large, we conclude that 
\begin{align} \label{eqboundper}
W(t_{2k})   \leq   \beta   \qquad  \forall k \in \{1,2,...  \}.
 \end{align} 
Under \eqref{eqboundper},  to complete the proof of \ref{item:SGB}, it is enough to show the existence of 
$\beta_1>0$ such that 
\begin{align*}
W(t)   \leq   \beta_1   \qquad \forall t \in [t_o, t_{2}]. 
 \end{align*} 
 
To this end, we first note that, for each $t \in [t_1, t_2]$, we have
\begin{align*}
V_1(t)   \leq   e^{- \alpha_1 (t -T_1 - \bar{T}_2)} V_1(t_1),
\quad 
V_2(t)  =  e^{2 \delta_1 (t - T_1 - \bar{T}_2)} V_2(t_1).    
 \end{align*}
Hence, for each $t \in [t_1 , t_2]$,  we have 
 \begin{align*}
W(t) \leq  e^{2 \delta_1 \bar{T}_1} W(t_1)
 \end{align*}
Furthermore, for each $t \in [t_o, t_1]$,  we use Lemma \ref{lem7} to conclude that
 \begin{align*}
V_2(t)  \leq  e^{- \alpha_2^{\frac{1}{3}} (t-t_o)} V_2(t_o) + b_o \alpha_2^{-\frac{1}{3}}, 
\quad
 V_1(t) &  \leq  \mathcal{V}_1 \left( \bar{T}_2,  V_1(t_o), \alpha_2^{\frac{1}{3}} V_2(t_o) \right). 
\end{align*} 
Now, since $\mathcal{V}_1$ is non-decreasing in its arguments, we conclude that,  or each $t \in [t_o, t_1]$,  we have 
 \begin{align*}
V_2(t)  \leq   \beta + b_o \alpha_2^{-\frac{1}{3}},   
\quad
 V_1(t)   \leq  \mathcal{V}_1 \left( \bar{T}_2,  \beta, \alpha_2^{\frac{1}{3}} \beta \right). 
\end{align*} 
As a result,  for each $t \in [t_o, t_2]$, we conclude that
\begin{align*} 
 W(t)  \leq \beta_1 :=  \left[  \beta + b_o \alpha_2^{-\frac{1}{3}}
+ \mathcal{V}_1 \left( \bar{T}_2,  \beta, \alpha_2^{\frac{1}{3}} \beta \right)\right] e^{2 \delta_1 \bar{T}_1}.
\end{align*}

\underline{Proof of  \ref{item:PSGAt}}  Consider the sequence 
$\{T_i\}^{\infty}_{i = 1}$ with  $T_i = t_{2i}$ for all 
$i \in \{1,2,... \}$.  Note that 
$$  T_{i+1} - T_i = \bar{T}_1 + \bar{T_2} \qquad \forall i \in \{0,1,2,...\}.  $$
Also,  we let 
\begin{align*}
q   :=   e^{- \alpha_1 \bar{T}_1/2}  + e^{- 2}, ~~
p  :=  e^{2 \delta_1 \bar{T}_1}  b_o
\left[ 1- e^{- \alpha_2^{\frac{1}{3}}  \bar{T}_2} \right] \alpha_2^{-\frac{1}{3}}.
\end{align*}
Note that $p$ can be made arbitrarily small by choosing $\alpha_2$ and $\alpha_1$ sufficiently large. 
Hence, without loss of generality, we let
$q  :=   2 e^{- 2}$ and $p  :=  e^{2 \delta_1 \bar{T}_1} b_o \alpha_2^{-\frac{1}{3}}$.
As a result,   using \eqref{eqboundperlin}, we obtain
\begin{align*}
W(T_{i+1})  \leq &~  q  W(T_i) + p \qquad \forall i \in \{1,2,... \}.
\end{align*}
Hence, for each $i \in \{1,2,... \}$,
\begin{align*}
W(T_{i+1})  \leq  q^{i-1}  W(T_1) + \sum^{i-2}_{k=0} q^k p  \leq   q^{i-1}  W(T_1) + p \frac{1- q^{i-1}}{1-q}.
\end{align*}
As a result, we have  
$$ \lim_{i \rightarrow \infty} W(T_{i})  \leq  \frac{p}{1-q} =  \frac{b_o e^{2 \delta_1 \bar{T}_1}}{1-2 e^{-2}} \alpha_2^{-\frac{1}{3}} .  $$ 

As a consequence, we conclude the existence of $M>1$ and $i^* \in \{1,2,... \}$ such that 
\begin{align} \label{eqtouse} 
 W(T_i)  \leq  \frac{e^{2 \delta_1 \bar{T}_1}}{1-2 e^{-2}} b_o \alpha_2^{-\frac{1}{3}} =: M b_o  \alpha_2^{-\frac{1}{3}}    \qquad \forall i \geq i^*.   
 \end{align}
 
\underline{Proof of \ref{item:SGUB}} 
Consider $\beta>0$ such that $W(t_o) \leq \beta$.  
According to the proof of  \ref{item:PSGAt},  when $(\alpha_1,\alpha_2)$ are appropriately chosen, there exists $i^* \geq \{1,2,... \}$ such that, for each $i \geq i^*$, we have  
$$ V_1(T_{i}) + V_2(T_{i}) = W(T_{i})  \leq M b_o \alpha_2^{-\frac{1}{3}}. $$ 
Hence, it is enough to show the existence of $\gamma>0$ such that,  when $W(T_1) \leq M b_o \alpha_2^{-\frac{1}{3}}$,  
it follows that 
$$   W(t) \leq \gamma  \qquad  \forall t \in [T_1,  T_2].  $$

To this end, we first note that, for each $t \in [T_1 + \bar{T}_2, T_2]$, we have
\begin{align*}
V_1(t)  \leq &~    e^{- \alpha_1 (t -T_1 - \bar{T}_2)} V_1(T_1 + \bar{T}_2)  
 \\
V_2(t) =  &~   e^{2 \delta_1 (t - T_1 - \bar{T}_2)} V_2(T_1 + \bar{T}_2).    
 \end{align*}
Hence, for each $t \in [T_1 + \bar{T}_2, T_2]$,  we have 
 \begin{align*}
W(t) \leq  e^{2 \delta_1 \bar{T}_1} W(T_1 + \bar{T}_2).
 \end{align*}
Furthermore, for each $t \in [T_1, T_1 + \bar{T}_2]$,  we use Lemma \ref{lem7} to conclude that
 \begin{align*}
V_2(t) & \leq  e^{- \alpha_2^{\frac{1}{3}} (t-T_1)} V_2(T_1) + b_o \alpha_2^{-\frac{1}{3}}.    
\\
 V_1(t) &  \leq  \mathcal{V}_1 \left(\bar{T}_2,  V_1(T_1), \alpha_2^{\frac{1}{3}} V_2(T_1)  \right). 
\end{align*} 

Now, since $\mathcal{V}_1$ is non-decreasing in its arguments, we conclude that, for $\alpha_2 \geq 1$ and for each $t \in [T_1, T_1 + \bar{T}_2]$, we have 
 \begin{align*}
V_2(t) \leq  ~ &  (M + 1) b_o  \alpha_2^{-\frac{1}{3}} .    
\\
 V_1(t)  \leq ~ & \mathcal{V}_1 \left(\bar{T}_2,  \alpha_2^{-\frac{1}{3}} Mb_o ,  Mb_o \right). 
\end{align*} 
As  result, for each $t \in [T_1, T_2]$, we have
\begin{align*}  
  W(t)   \leq \gamma & :=  e^{2 \delta_1 \bar{T}_1}  \mathcal{V}_1 \left(\bar{T}_2,  \alpha_2^{-\frac{1}{3}} Mb_o, Mb_o \right) 
 +   e^{2 \delta_1 \bar{T}_1}  (M + 1)b_o \alpha_2^{-\frac{1}{3}}.
\end{align*}
\hfill $\blacksquare$

\begin{remark}  
As studied in \cite{liu2001stability},  when $\lambda_1 < 4 \pi^2$,  the  $\mathcal{L}^2$ global exponential stabilization of the trivial solution to $\Sigma_2$  (thus to $\Sigma_1$) is a straightforward task.  Indeed,  using Lemma \ref{lem3},  we conclude that the solution  $\delta_o$ to Problem \ref{prob1}; while replacing $\lambda$ therein by the parameter  $\lambda_1$ of $\Sigma_2$,  is positive.  Hence,  by taking  $u_1 = u_2 = u_3  = 0$,  the resulting solution $(w,v)$ to $\Sigma_2$ is in $H^2_{o}(0,Y) \times H^2_{o}(Y,L)$. Hence, using  Lemma \ref{lem4-}, we conclude that $\dot{V}_1  \leq   - \delta_o \int^{Y}_{0} w(x)^2 dx = - 2\delta_o V_1$ and $\dot{V}_2 \leq - \delta_o \int^{L}_{Y} v(x)^2 dx = - 2\delta_o V_2$,  which implies $\mathcal{L}^2$-GES of the trivial solution.
\end{remark}

\begin{remark}
The restrictive nature of the upper bound derived in Lemma \ref{lem7} does not allow us to verify $\mathcal{L}^2$-PS for $\Sigma_1$ by analyzing $\Sigma_4$.  
Indeed,  according to the proof of $\mathcal{L}^2$-PSGA, 
we are able to show, when $k$ is large, that,  for some $M>0$,
$ ||u(t_{2k})||  \leq   M b_o \alpha_2^{-\frac{1}{3}}$.  
However,  we are not able to find a class $\mathcal{K}$ function $\kappa$ such that,  for each  $t \in [t_{2k}, t_{2k+2}]$,  
$||u(t)||  \leq   \kappa \left(||u(t_{2k})|| \right) + M b_o \alpha_2^{-\frac{1}{3}}$.
\end{remark}

\section{Conclusion and Future Work}

This paper proposed two boundary controllers to stabilize the origin of the nonlinear Kuramoto-Sivashinsky equation,  under intermittent measurements.  Using the first controller,  we are able to provide stronger stability properties compared with the second one.  In future work,  we would like to improve the stability properties of the second controller and consider the case where the coefficient $\lambda_1$ is unknown.  
 Furthermore,   boundary control of $\Sigma_1$ when $\lambda_1 \geq 4 \pi^2$ and while measuring $u, u_x,u_{xx},....$ at isolated points instead of intervals, is an open question.   A potential  way to generalize the existing results requiring measurements on intervals consists in designing an observer capable of reconstructing $u$ on intervals starting from  isolated-points measurements.

\section{Acknowledgment}

The authors are thankful to Camil Belhadjoudja for pointing out an issue in the proof of Lemma \ref{lem7} and suggesting a way to fix it.

\bibliography{biblio}

\def\loria{Loria} \def\nesic{Ne\v{s}i\'{c}\,}\def\nonumero{\def\numerodeitem{}}
\begin{thebibliography}{10}

\bibitem{kuramoto1980instability}
Y.~Kuramoto, ``Instability and turbulence of wavefronts in reaction-diffusion
  systems,'' {\em Progress of Theoretical Physics}, vol.~63, no.~6,
  pp.~1885--1903, 1980.

\bibitem{hac1993sensor}
A.~Ha{\'c} and L.~Liu, ``Sensor and actuator location in motion control of
  flexible structures,'' {\em Journal of sound and vibration}, vol.~167, no.~2,
  pp.~239--261, 1993.

\bibitem{rouchon2008quantum}
P.~Rouchon, ``Quantum systems and control 1,'' {\em Revue Africaine de la
  Recherche en Informatique et Math{\'e}matiques Appliqu{\'e}es}, vol.~9, 2008.

\bibitem{schneider2017nonlinear}
G.~Schneider and H.~Uecker, {\em Nonlinear PDEs}, vol.~182.
\newblock American Mathematical Soc., 2017.

\bibitem{karafyllis2019input}
I.~Karafyllis and M.~Krstic, {\em Input-to-state stability for PDEs}.
\newblock Springer, 2019.

\bibitem{kocarev1997synchronizing}
L.~Kocarev, Z.~Tasev, and U.~Parlitz, ``Synchronizing spatiotemporal chaos of
  partial differential equations,'' {\em Physical Review Letters}, vol.~79,
  no.~1, p.~51, 1997.

\bibitem{guo2020robust}
B.-Z. Guo and T.~Meng, ``Robust error based non-collocated output tracking
  control for a heat equation,'' {\em Automatica}, vol.~114, p.~108818, 2020.

\bibitem{armaou1999nonlinear}
A.~Armaou and P.~Christofides, ``Nonlinear feedback control of parabolic
  partial differential equation systems with time-dependent spatial domains,''
  {\em Journal of mathematical analysis and applications}, vol.~239, no.~1,
  pp.~124--157, 1999.

\bibitem{kang2018distributed}
W.~Kang and E.~Fridman, ``{Distributed sampled-data control of
  Kuramoto--Sivashinsky equation},'' {\em Automatica}, vol.~95, pp.~514--524,
  2018.

\bibitem{tasev2000synchronization}
Z.~Tasev, L.~Kocarev, L.~Junge, and U.~Parlitz, ``{Synchronization of
  Kuramoto--Sivashinsky equations using spatially local coupling},'' {\em
  International Journal of Bifurcation and Chaos}, vol.~10, no.~04,
  pp.~869--873, 2000.

\bibitem{khadra2005impulsive}
A.~Khadra, X.~Liu, and X.~Shen, ``Impulsive control and synchronization of
  spatiotemporal chaos,'' {\em Chaos, Solitons \& Fractals}, vol.~26, no.~2,
  pp.~615--636, 2005.

\bibitem{krstic2008boundary}
M.~Krstic and A.~Smyshlyaev, {\em {Boundary control of PDEs: A course on
  backstepping designs}}.
\newblock SIAM, 2008.

\bibitem{krstic2008adaptive}
M.~Krstic and A.~Smyshlyaev, ``{Adaptive boundary control for unstable
  parabolic PDEs—Part I: Lyapunov design},'' {\em IEEE Transactions on
  Automatic Control}, vol.~53, no.~7, pp.~1575--1591, 2008.

\bibitem{1102875}
R.~{Curtain}, ``Finite-dimensional compensator design for parabolic distributed
  systems with point sensors and boundary input,'' {\em IEEE Transactions on
  Automatic Control}, vol.~27, no.~1, pp.~98--104, 1982.

\bibitem{laila20063}
D.~Laila, D.~Ne{\v{s}}i{\'c}, and A.~Astolfi, ``Sampled-data control of
  nonlinear systems,'' in {\em Advanced topics in control systems theory},
  pp.~91--137, Springer, 2006.

\bibitem{yang2001impulsive}
T.~Yang, {\em Impulsive control theory}, vol.~272.
\newblock Springer Science \& Business Media, 2001.

\bibitem{parmananda1997generalized}
P.~Parmananda, ``Generalized synchronization of spatiotemporal chemical
  chaos,'' {\em Physical Review E}, vol.~56, no.~2, p.~1595, 1997.

\bibitem{junge2000synchronization}
L.~Junge and U.~Parlitz, ``Synchronization and control of coupled
  ginzburg-landau equations using local coupling,'' {\em Physical Review E},
  vol.~61, no.~4, p.~3736, 2000.

\bibitem{katz2020finite}
R.~Katz and E.~Fridman, ``{Finite-dimensional control of the
  Kuramoto-Sivashinsky equation under point measurement and actuation},'' in
  {\em 2020 59th IEEE Conference on Decision and Control (CDC)},
  pp.~4423--4428, IEEE, 2020.

\bibitem{Toshidaref}
K.~Toshihiro, ``{Adaptive stabilization of the Kuramoto-Sivashinsky
  equation},'' {\em International Journal of Systems Science}, vol.~33, no.~3,
  pp.~175--180, 2002.

\bibitem{liu2001stability}
W.-J. Liu and M.~Krsti{\'c}, ``{Stability enhancement by boundary control in
  the Kuramoto--Sivashinsky equation},'' {\em Nonlinear Analysis: Theory,
  Methods \& Applications}, vol.~43, no.~4, pp.~485--507, 2001.

\bibitem{sakthivel2007non}
R.~Sakthivel and H.~Ito, ``{Non-linear robust boundary control of the
  Kuramoto--Sivashinsky equation},'' {\em IMA Journal of Mathematical Control
  and Information}, vol.~24, no.~1, pp.~47--55, 2007.

\bibitem{guzman2019stabilization}
P.~Guzm{\'a}n, S.~Marx, and E.~Cerpa, ``{Stabilization of the linear
  Kuramoto-Sivashinsky equation with a delayed boundary control},'' {\em
  IFAC-PapersOnLine}, vol.~52, no.~2, pp.~70--75, 2019.

\bibitem{coron2015fredholm}
J.-M. Coron and Q.~L{\"u}, ``{Fredholm transform and local rapid stabilization
  for a Kuramoto--Sivashinsky equation},'' {\em Journal of Differential
  Equations}, vol.~259, no.~8, pp.~3683--3729, 2015.

\bibitem{zhang2001stability}
W.~Zhang, M.~S. Branicky, and S.~M. Phillips, ``Stability of networked control
  systems,'' {\em IEEE control systems magazine}, vol.~21, no.~1, pp.~84--99,
  2001.

\bibitem{sun2009networked}
Y.~Sun, S.~Ghantasala, and N.~H. El-Farra, ``Networked control of spatially
  distributed processes with sensor-controller communication constraints,'' in
  {\em 2009 American Control Conference}, pp.~2489--2494, IEEE, 2009.

\bibitem{zhang2019fuzzy}
X.-W. Zhang and H.-N. Wu, ``{Fuzzy stabilization design for semilinear
  parabolic PDE systems with mobile actuators and sensors},'' {\em IEEE
  Transactions on Fuzzy Systems}, vol.~28, no.~3, pp.~474--486, 2019.

\bibitem{khapalov1992observability}
A.~Y. Khapalov, ``Observability of parabolic systems with scanning sensors,''
  in {\em [1992] Proceedings of the 31st IEEE Conference on Decision and
  Control}, pp.~1311--1312, IEEE, 1992.

\bibitem{mu2014improving}
W.~Mu, B.~Cui, W.~Li, and Z.~Jiang, ``Improving control and estimation for
  distributed parameter systems utilizing mobile actuator--sensor network,''
  {\em ISA transactions}, vol.~53, no.~4, pp.~1087--1095, 2014.

\bibitem{5406054}
M.~A. Demetriou, ``Guidance of mobile actuator-plus-sensor networks for
  improved control and estimation of distributed parameter systems,'' {\em IEEE
  Transactions on Automatic Control}, vol.~55, no.~7, pp.~1570--1584, 2010.

\bibitem{ZHANG20201299}
X.-W. Zhang and H.-N. Wu, ``{Switching state observer design for semilinear
  parabolic PDE systems with mobile sensors},'' {\em Journal of the Franklin
  Institute}, vol.~357, no.~2, pp.~1299--1317, 2020.

\bibitem{teel1999semi}
A.~A.~R.~Teel, J.~Peuteman, and D.~Aeyels, ``Semi-global practical asymptotic
  stability and averaging,'' {\em Systems \& control letters}, vol.~37, no.~5,
  pp.~329--334, 1999.

\bibitem{ames1997inequalities}
B.~G. Pachpatte, {\em Inequalities for differential and integral equations},
  vol.~197.
\newblock Mathematics in Science and Engineering, 1997.

\end{thebibliography}
\bibliographystyle{ieeetr}     

\ifitsdraft

\appendix

\subsection{Intermediate Results}
We start recalling the following classical inequality.  
\begin{lemma} \label{lem5}
For all $(a,b) \in \mathbb{R} \times \mathbb{R}$, and for all
$\epsilon > 0$, we have
$$ a b \leq \frac{1}{2 \epsilon} a^2 + \frac{\epsilon}{2} b^2. $$
\end{lemma}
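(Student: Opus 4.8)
The plan is to prove this by completion of squares, recognizing the statement as Young's inequality with weight parameter $\epsilon$. First I would record the elementary starting point that every real square is nonnegative: for any $p,q \in \mathbb{R}$, the expansion of $(p-q)^2 \geq 0$ gives $2pq \leq p^2 + q^2$. This generic two-variable bound already has the right shape; the only work is to rescale so that the cross term reproduces $ab$ and the two square terms carry the prescribed weights $1/(2\epsilon)$ and $\epsilon/2$.

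The key step is therefore the substitution $p := a/\sqrt{\epsilon}$ and $q := \sqrt{\epsilon}\,b$, which is legitimate precisely because $\epsilon > 0$. Under this choice one has $pq = ab$, $p^2 = a^2/\epsilon$, and $q^2 = \epsilon b^2$, so the generic inequality $2pq \leq p^2 + q^2$ becomes $2ab \leq a^2/\epsilon + \epsilon b^2$. Dividing through by $2$ then yields exactly the claimed bound $ab \leq \frac{1}{2\epsilon}a^2 + \frac{\epsilon}{2}b^2$, completing the argument.

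There is essentially no obstacle here: the single point requiring attention is the hypothesis $\epsilon > 0$, which ensures that $\sqrt{\epsilon}$ and division by $\epsilon$ are well-defined and that no inequality is reversed. As an even more direct alternative that sidesteps the substitution, I would simply compute the difference of the two sides and observe that $\frac{1}{2\epsilon}a^2 + \frac{\epsilon}{2}b^2 - ab = \frac{1}{2\epsilon}(a - \epsilon b)^2 \geq 0$, which makes the nonnegativity transparent and additionally pins down the equality case $a = \epsilon b$.
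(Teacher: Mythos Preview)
Your proof is correct and follows essentially the same idea as the paper: the paper simply notes that $\left(\dfrac{a}{\sqrt{2\epsilon}} - \sqrt{\dfrac{\epsilon}{2}}\,b\right)^2 \geq 0$, which upon expansion is exactly your perfect-square identity $\dfrac{1}{2\epsilon}a^2 + \dfrac{\epsilon}{2}b^2 - ab \geq 0$. Your substitution route and your direct difference computation are both equivalent repackagings of this single nonnegative square.
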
 
\begin{proof}
To establish the proof, it is enough to note that
$$  \left( \frac{a}{\sqrt{2 \epsilon}}  - \sqrt{\frac{\epsilon}{2}} b   \right)^2 \geq 0. $$
\end{proof}

Next, we recall from \cite{ames1997inequalities} the classical Gronwall-Bellman inequality.

\begin{lemma} \label{lem5-}
Given $T>0$ and two bounded, nonnegative, and measurable functions $V : [0,T] \rightarrow \mathbb{R}$ and $\alpha : [0,T] \rightarrow \mathbb{R}$, given nonnegative integrable function
$\beta:[0,T] \rightarrow \mathbb{R}$, we assume that 
$$ V(t) \leq \alpha(t) + \int^{t}_{0} \beta(s) V(s) ds \qquad \forall t \in [0,T]. $$
Then, for each $t \in [0,T]$, we have 
$$ V(t) \leq \alpha(t) + 
\int^{t}_{0} \alpha(s) \beta(s) 
e^{\int^{t}_{s} \beta(\tau) d \tau } ds. $$
\end{lemma}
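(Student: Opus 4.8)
The plan is to reduce the integral inequality to a linear first-order differential inequality for the accumulated term and then solve it by the integrating-factor method. First I would set
\[
R(t) := \int_0^t \beta(s) V(s)\, ds,
\]
which is well defined and absolutely continuous on $[0,T]$, since $V$ is bounded and $\beta$ is nonnegative and integrable, so the product $\beta V$ is integrable. By hypothesis $V(t) \le \alpha(t) + R(t)$ for all $t \in [0,T]$, and by the Lebesgue differentiation theorem $R'(t) = \beta(t) V(t)$ for almost every $t$.

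Next I would combine these facts. Since $\beta \ge 0$, multiplying the pointwise bound $V(t) \le \alpha(t) + R(t)$ by $\beta(t)$ and substituting $R'(t) = \beta(t) V(t)$ gives, for almost every $t$,
\[
R'(t) \le \beta(t)\alpha(t) + \beta(t) R(t).
\]
The key step is to introduce the integrating factor $\mu(t) := \exp\left(-\int_0^t \beta(\tau)\,d\tau\right)$, which is absolutely continuous and satisfies $\mu'(t) = -\beta(t)\mu(t)$ and $\mu(t) > 0$. Multiplying the inequality by $\mu(t)$ and regrouping yields
\[
\frac{d}{dt}\bigl[ \mu(t) R(t) \bigr] \le \beta(t)\alpha(t)\mu(t) \qquad \text{for a.e. } t .
\]

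Then I would integrate from $0$ to $t$, using $R(0) = 0$ together with the absolute continuity of $\mu R$, to obtain $\mu(t) R(t) \le \int_0^t \beta(s)\alpha(s)\mu(s)\, ds$. Dividing by $\mu(t)$ and noting that $\mu(s)/\mu(t) = \exp\left(\int_s^t \beta(\tau)\, d\tau\right)$ gives
\[
R(t) \le \int_0^t \alpha(s)\beta(s)\, e^{\int_s^t \beta(\tau)\, d\tau}\, ds,
\]
and substituting this back into $V(t) \le \alpha(t) + R(t)$ delivers exactly the claimed estimate. The main obstacle is purely regularity bookkeeping rather than conceptual: one must confirm that $R$, hence $\mu R$, is absolutely continuous so the fundamental theorem of calculus applies to the almost-everywhere differential inequality, and that $\beta\alpha\mu$ is integrable so the final integral is finite. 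A cleaner alternative that avoids differentiation altogether is Picard iteration, substituting the hypothesis into itself repeatedly to build a partial sum of iterated integrals and recognizing the limit as the exponential kernel; I would fall back on that only if the absolute-continuity justification proved delicate.
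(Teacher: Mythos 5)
Your proof is correct, and there is nothing in the paper to compare it against: the paper does not prove this lemma, but simply recalls it from the cited reference \cite{ames1997inequalities} as the classical Gronwall--Bellman inequality. Your argument --- setting $R(t):=\int_0^t\beta(s)V(s)\,ds$, passing to the a.e.\ differential inequality $R'\le \beta\alpha+\beta R$, and integrating with the factor $\mu(t)=e^{-\int_0^t\beta(\tau)\,d\tau}$ --- is precisely the standard textbook proof, and you have correctly handled the only delicate points, namely the absolute continuity of $R$ and of $\mu R$ (so the fundamental theorem of calculus applies to the a.e.\ inequality) and the integrability of $\alpha\beta\mu$.
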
 
 
In the following lemma,  we show how to numerically compute $\delta_o$ the solution to  Problem \ref{prob1}.  Furthermore, we explicitly compute a 
lower-bound of $\delta_o$. 

\begin{lemma} \label{lemlowbound}
Given $\lambda \geq 4 \alpha_2^{\frac{1}{3}} V_2^2$, we let $\delta_o$ be the corresponding solution Problem \ref{prob1} with $(a,b) := (0, 1)$.  
Hence,  $\delta_o$ can be computed as
\begin{equation*}
\begin{aligned}
\delta_o &  := \min \{ \delta_{o1},\delta_{o2} \},
\\
 \delta_{o1} & := \min \{ \delta \in [- \lambda^2/4,0] : \det \mathcal{A}_1 = 0  \},  
\\ 
 \delta_{o2} & := \min \{ \delta \leq [ \delta^\star_{o2} , - \lambda^2/4] : \det \mathcal{A}_2 = 0  \},  
\end{aligned}
\end{equation*}
where  $4 \delta^\star_{o2}  :=  - \left[ \left( \frac{\lambda} {2 \ln \left( 13/12 \right)}  \right)^2 + \lambda + 4 x_o^2 \right]^{2}$,  
$x_o>0$ is such that,  for each 
$x \geq x_o$,  we have  
\begin{align*}
-  \exp^{2 x} x^2  + 12 \ln(2)    \exp^{2x} x  & +  
 80 \ln(2)  x  + 56  x^2 + 12 \ln(2)^2 \leq 0,
 \end{align*}
and the matrices $\mathcal{A}_1$ and $\mathcal{A}_2$ are given in \eqref{eqA1mat} and \eqref{eqA2mat}, respectively. 
\end{lemma}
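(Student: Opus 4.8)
The plan is to reduce Problem \ref{prob1} on $[0,1]$ to an explicit algebraic condition on $\delta$ by solving the constant-coefficient ODE \eqref{eigenprob} and imposing the four boundary conditions \eqref{eqBC}. Writing the characteristic equation of $z_{xxxx} + \lambda z_{xx} - \delta z = 0$ as $r^4 + \lambda r^2 - \delta = 0$ and setting $s := r^2$, the two values $s_{\pm} = (-\lambda \pm \sqrt{\lambda^2 + 4\delta})/2$ govern the nature of the four roots $r$, and this nature changes precisely at the sign of the discriminant, i.e. at $\delta = -\lambda^2/4$. First I would record the two normal forms: for $\delta \in [-\lambda^2/4, 0]$ both $s_{\pm}$ are real and nonpositive, so the general real solution is a combination of $\cos(\omega_\pm x),\sin(\omega_\pm x)$ with two real frequencies; for $\delta \le -\lambda^2/4$ the $s_{\pm}$ are complex conjugates, so the roots form a conjugate quadruple $\pm\mu\pm i\nu$ and the solution mixes $e^{\pm\mu x}\cos(\nu x)$ and $e^{\pm\mu x}\sin(\nu x)$. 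In the second regime I would also record $\mu^2 = (\sqrt{-\delta}-\lambda/2)/2$ and $\nu^2 = (\sqrt{-\delta}+\lambda/2)/2$, which follow from $\mu^2+\nu^2 = |s| = \sqrt{-\delta}$ and $\mu^2-\nu^2 = \mathrm{Re}(s) = -\lambda/2$.

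Next I would impose $z(0)=z(1)=z_x(0)=z_x(1)=0$ on the general solution of each regime, producing a homogeneous linear system $\mathcal{A}_i c = 0$ in the four coefficients $c$, where $\mathcal{A}_1$ (resp.\ $\mathcal{A}_2$) is the $4\times 4$ matrix recorded in the statement. A nontrivial $H^2$ solution exists iff $\det\mathcal{A}_i = 0$, so the admissible $\delta$'s are exactly the zeros of $\det\mathcal{A}_1$ on $[-\lambda^2/4,0]$ and of $\det\mathcal{A}_2$ on $(-\infty,-\lambda^2/4]$. Since the hypothesis on $\lambda$ together with Lemma \ref{lem3} forces $\delta_o \le 0$, the search may be confined to $\delta \le 0$, whence the smallest eigenvalue is $\delta_o = \min\{\delta_{o1},\delta_{o2}\}$ with $\delta_{o1},\delta_{o2}$ as defined; root-finding the two determinants numerically then returns $\delta_o$.

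The remaining and hardest task is to justify truncating the search for $\delta_{o2}$ at the explicit value $\delta^\star_{o2}$, i.e.\ to show $\det\mathcal{A}_2(\delta)\neq 0$ for all $\delta < \delta^\star_{o2}$. In the regime $\delta\le-\lambda^2/4$ I would expand $\det\mathcal{A}_2$ as a sum whose leading term, as $\delta\to-\infty$, grows like $e^{2\mu}$ times a factor of order $\mu^2$, while the remaining contributions are dominated by $e^{2\mu}\mu$, $\mu^2$, $\mu$, and constants (the $\cos\nu,\sin\nu$ factors bounded by $1$, with the $\ln(13/12)$ and $\ln(2)$ constants entering from elementary estimates on the exponential and trigonometric entries and from thresholds separating $\nu$ from the zeros of $\sin,\cos$). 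Substituting $x:=\mu$ and using that $\delta\le\delta^\star_{o2}$ is equivalent to $\mu\ge x_o$ — which follows because at $\delta=\delta^\star_{o2}$ one has $\mu^2 = \tfrac14\bigl(\lambda/2\ln(13/12)\bigr)^2 + x_o^2 \ge x_o^2$ and $\mu$ is decreasing in $\delta$ — the sign of $\det\mathcal{A}_2$ is pinned down as soon as the displayed inequality $-e^{2x}x^2 + 12\ln(2)e^{2x}x + 80\ln(2)x + 56x^2 + 12\ln(2)^2 \le 0$ holds, which is exactly the statement that the exponential term $e^{2x}x^2$ eventually dominates every other contribution. The main obstacle is organizing this determinant estimate cleanly enough that all lower-order terms collapse into the single scalar inequality with these explicit constants; once that bookkeeping is complete, $\det\mathcal{A}_2$ is nonzero below $\delta^\star_{o2}$, which yields the claimed lower bound and completes the characterization of $\delta_o$.
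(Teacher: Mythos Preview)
Your first two paragraphs are correct and match the paper exactly: solve the characteristic equation $r^4+\lambda r^2-\delta=0$, split at the discriminant threshold $\delta=-\lambda^2/4$, write the general solution in each regime, impose the four boundary conditions, and reduce existence of a nontrivial $H^2$ solution to $\det\mathcal{A}_i=0$. This gives $\delta_o=\min\{\delta_{o1},\delta_{o2}\}$ with the first search confined to a compact interval.

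Your third paragraph, however, misidentifies the mechanism behind the explicit constants and misses the key technical idea that makes the estimate tractable. The constants $\ln 2$ and $\ln(13/12)$ do \emph{not} arise from thresholds separating $\nu$ from zeros of $\sin,\cos$. They come from the observation that
\[
|c|:=|\omega_4-\omega_5|=\frac{\lambda}{2(\omega_4+\omega_5)}\to 0\quad\text{as }\delta\to-\infty,
\]
where $\omega_4,\omega_5$ are the real and imaginary parts of the characteristic roots. The paper exploits this by writing $\mathcal{A}_2=\mathcal{A}_{2o}+\tilde{\mathcal{A}}_2$, where $\mathcal{A}_{2o}$ is obtained by replacing every $\omega_4$ in $\mathcal{A}_2$ by $\omega_5$; then $\det\mathcal{A}_{2o}=-\omega_5^2 e^{2\omega_5}$ is computed \emph{explicitly}, and all perturbation terms are bounded by powers of $(e^{|c|}-1)$ and $|c|$. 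The threshold $e^{|c|}-1\le 1$ (i.e.\ $|c|\le\ln 2$) produces the $\ln 2$ factors; the sharper threshold $12(e^{|c|}-1)\le 1$ (i.e.\ $|c|\le\ln(13/12)$) produces the $\ln(13/12)$ in $\delta^\star_{o2}$. The final scalar inequality is in the variable $\omega_5$ (your $\nu$), not $\mu$; the formula for $\delta^\star_{o2}$ encodes \emph{both} the constraint $|c|\le\ln(13/12)$ and $\omega_5\ge x_o$ simultaneously. Without the $\mathcal{A}_{2o}$ decomposition and the smallness of $|c|$, your proposed direct expansion of $\det\mathcal{A}_2$ would leave you with cross-terms like $e^{\omega_4}\sin(\omega_5)$ whose bookkeeping does not collapse to the stated polynomial with those specific coefficients.
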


\begin{proof}
We start noting that the characteristic polynomial associated to \eqref{eigenprob} is given by $P(x) := x^4  + \lambda x^2 - \delta$, where  
$\delta \leq 0$ and $\lambda \geq 4 \alpha_2^{\frac{1}{3}} V_2^2$.  
The polynomial $P$ has the following roots
\begin{align*}
x_1 & := \frac{\sqrt{ - \lambda + \sqrt{\lambda^2 + 4 \delta}}}{\sqrt{2}}, ~ 
x_2 := \frac{\sqrt{- \lambda - \sqrt{\lambda^2 + 4 \delta}  }}{\sqrt{2}} \\
x_3 & := - \frac{\sqrt{ - \lambda + \sqrt{\lambda^2 + 4 \delta}}}{\sqrt{2}}, ~
x_4 :=  - \frac{ \sqrt{ - \lambda - \sqrt{\lambda^2 + 4 \delta}}}{\sqrt{2}}.
\end{align*}

At this point, we distinguish between the following two situations:

\begin{enumerate}
\item When $\delta \in [-\frac{\lambda^2}{4}, 0]$, the roots are complex conjugate and located on the imaginary axis.  In particular, we have  
\begin{align*}
x_1 & := i \frac{\sqrt{  \lambda - \sqrt{\lambda^2 + 4 \delta}}}{\sqrt{2}} =: i \omega_1  \\ 
x_2 & := i \frac{\sqrt{ \lambda +  \sqrt{\lambda^2 + 4 \delta}  }}{\sqrt{2}}  =: i \omega_2  \\
x_3 & := - i \frac{\sqrt{  \lambda - \sqrt{\lambda^2 + 4 \delta}}}{\sqrt{2}}  =: - i \omega_1  \\ 
x_4 & :=  -i \frac{ \sqrt{ \lambda + \sqrt{\lambda^2 + 4 \delta}}}{\sqrt{2}} =: - i \omega_2.
\end{align*}

Hence,  the solutions to the differential equation in  \eqref{eigenprob} are of the form 
\begin{align*}
 z(x) := C_1 \sin (\omega_1 x)  + C_2 \cos (\omega_1 x) 
 + C_3 \sin (\omega_2 x) + C_4 \cos (\omega_2 x),   
 \end{align*}
where $(C_1,C_2, C_3,C_4)$ are free parameters subject to the boundary conditions in \eqref{eqBC}. Hence,  the solution $z$ is nontrivial if and only if 
$$ \mathcal{A}_1 \begin{bmatrix}  C_1 & C_2 & C_3 & C_4  \end{bmatrix}^\top  = 0,  $$
 where 
\begin{align} \label{eqA1mat}
\hspace{-0.6cm} \mathcal{A}_1 := 
\begin{bmatrix}       
0 & 1 & 0 & 1  \\
\sin (\omega_1) &  \cos (\omega_1) &  \sin (\omega_2) &  \cos (\omega_2)  \\
\omega_1 & 0 & \omega_2 & 0 \\
\omega_1 \cos (\omega_1) & - \omega_1  \sin (\omega_1) &  \omega_2 
 \cos (\omega_2) & - \omega_2  \sin (\omega_2)
\end{bmatrix}
\end{align} 
\item When $\delta < - \lambda^2/4$,  the solutions are complex conjugate. That is,  by letting $\omega_3  := \sqrt{- \lambda^2 - 4 \delta}$, we obtain 
\begin{align*}
x_1 & :=  \frac{\sqrt{ - \lambda + i \omega_3}}{\sqrt{2}},  \quad  
x_2  := \frac{\sqrt{- \lambda - i \omega_3}}{\sqrt{2}},  \\
x_3 & := -  \frac{\sqrt{ -  \lambda + i \omega_3}}{\sqrt{2}},   \quad 
x_4  :=  - \frac{ \sqrt{-  \lambda - i \omega_3}}{\sqrt{2}}.
\end{align*}
Now, we let $r := \sqrt{\frac{\sqrt{\lambda^2 + \omega_3^2}}{2}}$ and $\theta := \frac{\arctan (\omega_3/\lambda)}{2}$ to conclude that 
$x_1 := r \exp^{i \theta}$,  $x_2  :=  r \exp^{- i \theta}$,  
$x_3 := -  r \exp^{i \theta}$,   $x_4  :=  -  r \exp^{- i \theta}$.
Finally,   we let $\omega_4 := r \cos(\theta)  $ and $\omega_5 := 
r \sin(\theta)$. Hence,  the solutions to the differential equation in  \eqref{eigenprob} are of the form 
\begin{align*}
 z(x) := & ~ C_1   \exp^{\omega_4 x} \sin (\omega_5 x) + C_2 \exp^{\omega_4 x} \cos (\omega_5 x) 
\\ & 
 + C_3 \exp^{- \omega_4 x} \sin (\omega_5 x) + C_4 \exp^{-\omega_4 x} \cos (\omega_5 x),   
 \end{align*}
where $(C_1,C_2, C_3,C_4)$ are free parameters subject to the boundary conditions in \eqref{eqBC}.  Hence,  the solution $z$ is nontrivial if and only if 
$$ \mathcal{A}_2 \begin{bmatrix}  C_1 & C_2 & C_3 & C_4  \end{bmatrix}^\top  = 0,  $$  
where 
\begin{align} \label{eqA2mat}
\mathcal{A}_2 := 
\begin{bmatrix}       
\mathcal{A}_{21} & \mathcal{A}_{22} & \mathcal{A}_{23} & \mathcal{A}_{24}
\end{bmatrix},
\end{align}
\begin{align*}
\mathcal{A}_{21} := 
\begin{bmatrix}       
0 
\\
  \exp^{\omega_4} \sin (\omega_5) 
\\
\omega_5 
 \\ 
 \omega_4  \exp^{\omega_4} \sin (\omega_5) + \omega_5 \exp^{\omega_4} \cos (\omega_5)
\end{bmatrix}
\end{align*}
\begin{align*}
\mathcal{A}_{22} := 
\begin{bmatrix}       
 1 
\\
 \exp^{\omega_4} \cos (\omega_5) 
\\
 \omega_4 
 \\ 
\omega_4  \exp^{\omega_4} \cos (\omega_5)  - 
\omega_5 \exp^{\omega_4} \sin (\omega_5)
\end{bmatrix}
\end{align*}
\begin{align*}
\mathcal{A}_{23} := 
\begin{bmatrix}       
0 \\
\exp^{- \omega_4} \sin (\omega_5) \\
\omega_5   \\ 
- \omega_4 \exp^{- \omega_4} \sin (\omega_5) +  \omega_5 \exp^{- \omega_4} \cos (\omega_5)
\end{bmatrix}
\end{align*}
\begin{align*}
\mathcal{A}_{24} := 
\begin{bmatrix}       
1  \\  
\exp^{-\omega_4} \cos (\omega_5) \\
-\omega_4 \\ 
- \omega_4  \exp^{-\omega_4} \cos (\omega_5) - \omega_5 \exp^{-\omega_4} \sin (\omega_5)
\end{bmatrix}.
\end{align*}
\end{enumerate}
As a consequence,  solving Problem \ref{prob1} is equivalent to solving the following optimization problem:
\begin{equation}
\label{eqoptim}
\begin{aligned}
\delta_o &  := \min \{ \delta_{o1},\delta_{o2} \},
\\
 \delta_{o1} & := \min \{ \delta \in [- \lambda^2/4,0] : \det \mathcal{A}_1 = 0  \},  
\\ 
 \delta_{o2} & := \min \{ \delta \leq - \lambda^2/4 : \det \mathcal{A}_2 = 0  \}. 
\end{aligned}
\end{equation}

Note that it  is easy to find $\delta_{o1}$ by plotting the curve 
of $\det \mathcal{A}_1$ as function of $\delta$ on the compact interval 
$[- \lambda^2/4,0]$.  However,  $\delta_{o2}$ is more difficult to obtain as it is the smallest root of a nonlinear function on an unbounded domain.  
To handle this issue,  we propose a method that gives a rough lower bound of $\delta_{o2}$,  denoted $\delta_{o2}^\star \leq - \lambda^2 /4$.   
As a result,  solving Problem \ref{prob1} becomes equivalent to solving the following optimization problem:
\begin{equation}
\label{eqoptim1}
\begin{aligned}
\delta_o &  := \min \{ \delta_{o1},\delta_{o2} \},
\\
 \delta_{o1} & := \min \{ \delta \in [- \lambda^2/4,0] : \det \mathcal{A}_1 = 0  \},  
\\ 
 \delta_{o2} & := \min \{ \delta \leq [ \delta^\star_{o2} , - \lambda^2/4] : \det \mathcal{A}_2 = 0  \}. 
\end{aligned}
\end{equation}

To compute $\delta^{\star}_{o2}$,  we first assume, without loss of generality,  that $\omega_4 > 0$. 
Next,  we decompose the matrix $\mathcal{A}_2$ as follows:
$\mathcal{A}_2  := \mathcal{A}_{2o} + \tilde{\mathcal{A}}_2$, where 
$\mathcal{A}_{2o} := 
\begin{bmatrix}       
\mathcal{A}_{21o} & \mathcal{A}_{22o} & \mathcal{A}_{23o} & \mathcal{A}_{24o}
\end{bmatrix}$,
\begin{align*}
\mathcal{A}_{21o} := 
\begin{bmatrix}       
0 
\\
  \exp^{\omega_5} \sin (\omega_5) 
\\
\omega_5 
 \\ 
 \omega_5  \exp^{\omega_5}  
 \left( \sin (\omega_5) +  \cos (\omega_5) \right)
\end{bmatrix},
\end{align*}
\begin{align*}
\mathcal{A}_{22o} := 
\begin{bmatrix}       
 1 
\\
 \exp^{\omega_5} \cos (\omega_5) 
\\
 \omega_5
 \\ 
\omega_5  \exp^{\omega_5} 
\left( \cos (\omega_5)  -  \sin (\omega_5) \right)
\end{bmatrix},
 ~~
\mathcal{A}_{23o} := 
\begin{bmatrix}       
 0
\\
0
\\
  \omega_5  
 \\ 
 0
\end{bmatrix}
, \quad 
\mathcal{A}_{24o} := 
\begin{bmatrix}       
1 
\\  
0
\\
  -\omega_5
 \\ 
0
\end{bmatrix},
\end{align*}
and $\tilde{\mathcal{A}}_{2} := 
\begin{bmatrix}       
\tilde{\mathcal{A}}_{21} & \tilde{\mathcal{A}}_{22} & 
\tilde{\mathcal{A}}_{23} & \tilde{\mathcal{A}}_{24}
\end{bmatrix}$,

\begin{align*}
\tilde{\mathcal{A}}_{21} := 
\begin{bmatrix}       
0 
\\
\left( \exp^{\omega_4} - \exp^{\omega_5} \right)  \sin (\omega_5) 
\\
0
 \\ 
\begin{bmatrix} 
\left( \omega_4  \exp^{\omega_4}  -   \omega_5  \exp^{\omega_5} \right) \sin (\omega_5) 
\\
+ \omega_5  \left( \exp^{\omega_4} - \exp^{\omega_5} \right)  \cos (\omega_5)
\end{bmatrix}
\end{bmatrix}
\end{align*}
\begin{align*}
\tilde{\mathcal{A}}_{22} := 
\begin{bmatrix}       
 0
\\
\left( \exp^{\omega_4}  -   \exp^{\omega_5} \right)  \cos (\omega_5) 
\\
 \omega_4 -  \omega_5
 \\ 
 \begin{bmatrix}
\left( \omega_4  \exp^{\omega_4} - \omega_5  \exp^{\omega_5}  \right)   \cos (\omega_5)  
\\
-  \omega_5 \left( \exp^{\omega_4}  - \exp^{\omega_5} \right)  \sin (\omega_5)
\end{bmatrix}
\end{bmatrix}
\end{align*}
\begin{align*}
\tilde{\mathcal{A}}_{23} := 
\begin{bmatrix}       
 0
\\
  \exp^{- \omega_4} \sin (\omega_5) 
\\
0
 \\ 
 - \exp^{- \omega_4} \left( \omega_4  \sin (\omega_5) -  \omega_5 
 \cos (\omega_5) \right)
\end{bmatrix}
\end{align*}
\begin{align*}
\tilde{\mathcal{A}}_{24} := 
\begin{bmatrix}       
0
\\  
 \exp^{-\omega_4} \cos (\omega_5)
\\
 \omega_5 - \omega_4
 \\ 
- \exp^{-\omega_4} \left( \omega_4   \cos (\omega_5) + \omega_5  
\sin (\omega_5) \right)
\end{bmatrix}.
\end{align*}
Now, we note that 
$\det \mathcal{A}_{2o} = -\omega_5^2 \exp^{2 \omega_5} < 0$.
Next, we introduce the following
notation:
\begin{align*}
a &  :=  \exp^{\omega_4} - \exp^{\omega_5},  ~
b :=  \omega_4  \exp^{\omega_4}  -   \omega_5  \exp^{\omega_5}, 
\\  
c & :=  \omega_4 -  \omega_5,
\quad d  := \exp^{-\omega_4}, ~ e  :=  b \sin (\omega_5) + a \omega_5 \cos(\omega_5) 
\\
f & := b \cos (\omega_5) - a \omega_5 \sin(\omega_5) 
\\
g & := - d  c \sin(\omega_5)   - d  \omega_5  \left[ \sin(\omega_5) - \cos(\omega_5)    \right]  
\\
h & := - d  c \cos(\omega_5)   - d  \omega_5  \left[ \cos(\omega_5) + \sin(\omega_5)    \right].
\end{align*}
Furthermore, we note that
$ |c|  = \frac{\lambda}{ \left(  \sqrt{-4\delta}  + \lambda \right)^{\frac{1}{2}}  + \left(  \sqrt{-4\delta}  - \lambda  \right)^{\frac{1}{2}} }$ and
 $ \omega_5 = \frac{1}{2}  \left[  \frac{- \lambda^2 - 4 \delta}{\sqrt{-4 \delta} + \lambda}   \right]^{\frac{1}{2}}$. Hence, we know how to make $|c|$ arbitrarily small by choosing $-\delta$ sufficiently large.  Next, we consider the following inequalities:  
\begin{equation}
\label{eqinequ}
\begin{aligned}
\hspace{-0.2cm} |a| \leq & ~  \exp^{\omega_5}  \left(  \exp^{|c|} - 1  \right),
\\
\hspace{-0.2cm} |b|  \leq & ~   |c|  \exp^{|c|} \exp^{\omega_5}  +\left( \exp^{|c|} - 1  \right) \omega_5 \exp^{\omega_5},
\\
\hspace{-0.2cm} |d| \leq & ~ \exp^{|c|} \exp^{-\omega_5},
\\
\hspace{-0.2cm} \max\{ |e|, |f|\} \leq & ~ |c|  \exp^{|c|} \exp^{\omega_5}  +2 \left( \exp^{|c|} - 1  \right) \omega_5 \exp^{\omega_5},
\\
\hspace{-0.2cm} \max\{ |g|, |h| \} \leq & ~ \exp^{-\omega_5}    \left(\exp^{|c|} |c| + 2 \exp^{|c|}  \omega_5 \right).
\end{aligned}
\end{equation}
On the other hand,  the matrix $\tilde{\mathcal{A}}_2$ can be expressed as  
\begin{align*}
\tilde{\mathcal{A}}_2 := 
\begin{bmatrix}
0 &  0 & 0 & 0
\\
a \sin (\omega_5) & a \cos(\omega_5) &  d \sin(\omega_5) & d \cos(\omega_5)
\\
0 & c & 0 &  -c
\\
e &  f & g & h 
\end{bmatrix}.
\end{align*}

Using the new notation,   $\det \mathcal{A}_2$ is given by
\begin{align*}
&  \det \mathcal{A}_2  = - 2 \exp^{2 \omega_5} \omega_5^2 + \exp^{\omega_5} \omega_5  \sin (\omega_5)   f
 \\ & + a \omega_5^2 \sin(\omega_5)  \exp^{\omega_5} (\cos(\omega_5) - \sin(\omega_5)) 
 \\ & + a \omega_5 \sin(\omega_5)  f   - \omega_5^2 \exp^{\omega_5} (\sin(\omega_5) + \cos(\omega_5))  a \cos(\omega_5) 
\\ & -   e \omega_5 \exp^{\omega_5} \cos(\omega_5)  
-  e \omega_5  a \cos(\omega_5) 
\\ & - d \sin (\omega_5)  \omega_5 ( \omega_5 \exp^{\omega_5} 
  ( \cos(\omega_5) - \sin (\omega_5)) + f)      
\\ & + d \sin (\omega_5) (\omega_5 + c)  (\omega_5  \exp^{\omega_5} (\sin(\omega_5)  + \cos(\omega_5)) + e )
\\ & + g (\exp^{\omega_5}  + a) \sin(\omega_5) (\omega_5 + c)  - g \omega_5 (\exp^{\omega_5} + a) \cos(\omega_5)
\\ & - (\exp^{\omega_5} + a) \sin(\omega_5) (\omega_5 h + g(\omega_5 + c)) +  d \sin(\omega_5) \omega_5 h 
\\ & + d \sin(\omega_5)  (\omega_5 + c) 
(\omega_5 \exp^{\omega_5} (\sin(\omega_5) + \cos(\omega_5)) +   e )  
\\ &  - d \cos(\omega_5) \omega_5 g + d \cos(\omega_5) \omega_5 e + d \cos(\omega_5) \omega_5^2 \exp^{\omega_5} (\sin(\omega_5) + \cos(\omega_5)).
\end{align*}
Next,  since $\omega_5  > 0$, we obtain
\begin{align*}
&  \det \mathcal{A}_2  \leq - 2 \exp^{2 \omega_5} \omega_5^2 + \exp^{\omega_5} \omega_5 | f|
 \\ & + 2 |a| \omega_5^2   \exp^{\omega_5}  + |a| \omega_5  |f|   + \omega_5^2 \exp^{\omega_5} 2  |a| 
 +  |e| \omega_5 \exp^{\omega_5}   +  |e| \omega_5  |a|  + d   \omega_5 ( 2 \omega_5 \exp^{\omega_5} + |f|)      
\\ & + d (\omega_5 + |c|)  (2 \omega_5  \exp^{\omega_5} + |e| )
 + |g| (\exp^{\omega_5}  + |a|) \ (\omega_5 + |c|)  +  |g|  \omega_5 (\exp^{\omega_5} + |a|) 
\\ & + (\exp^{\omega_5} + |a|)  (\omega_5 |h| + |g| \omega_5 + |c| |g| )
\\ & +  d \omega_5 |h| 
 + d   (\omega_5 + |c|) 
(2 \omega_5 \exp^{\omega_5}  +   |e| )  
  + d  \omega_5 |g| + d  \omega_5 |e| + 2 d  \omega_5^2 \exp^{\omega_5}.
\end{align*}
Now, using \eqref{eqinequ}, we conclude that
\begin{align*}
& \det \mathcal{A}_2  \leq - 2 \exp^{2 \omega_5} \omega_5^2 +12  \left( \exp^{|c|} - 1  \right) \exp^{2\omega_5} \omega_5^2   
 \\ & + 2 \left( |c|  \exp^{|c|}  + \left(  \exp^{|c|} - 1  \right)  |c|  \exp^{|c|}    \right)  \exp^{2\omega_5} \omega_5     
\\ & + \exp^{|c|}     \left( 2 \omega_5^2 + |c|  \exp^{|c|} \omega_5 +2 \left( \exp^{|c|} - 1  \right) \omega_5^2    \right)      
\\ & + \exp^{|c|}  \left( \omega_5 + |c|)  (2 \omega_5  + |c|  \exp^{|c|}  +2 \left( \exp^{|c|} - 1  \right) \omega_5    \right)
\\ & +   \left(\exp^{|c|} |c| + 2 \exp^{|c|}  \omega_5 \right) \left( 1 + \left(  \exp^{|c|} - 1  \right)  \right)  (\omega_5 + |c|) 
\\ & +   \left(\exp^{|c|} |c| + 2 \exp^{|c|}  \omega_5 \right)  \omega_5 \left( 1 +   \left(  \exp^{|c|} - 1  \right) \right) 
\\ & + \left(1 +   \left(  \exp^{|c|} - 1  \right) \right) 
 ( 2 \omega_5 +  |c| )   \left(\exp^{|c|} |c| + 2 \exp^{|c|}  \omega_5 \right)  
\\ & +  \exp^{|c|}  \omega_5    \left(\exp^{|c|} |c| + 2 \exp^{|c|}  \omega_5 \right)
 \\ & + \exp^{|c|}   (\omega_5 + |c|) 
\left( 2 \omega_5 + |c|  \exp^{|c|}   +2 \left( \exp^{|c|} - 1  \right) \omega_5   \right)  
\\ &  + \exp^{|c|} \omega_5   \left(\exp^{|c|} |c| + 2 \exp^{|c|}  \omega_5 \right)
\\ & +  |c|  \exp^{2|c|}   \omega_5  +2 \left( \exp^{|c|} - 1  \right) \exp^{|c|}  \omega_5^2   
 + 2 \exp^{|c|}   \omega_5^2.
\end{align*}
At this point, we choose $-\delta$ sufficiently large such that 
$\exp^{|c|} - 1 \leq 1$. Hence, we obtain 
\begin{align*}
& \det \mathcal{A}_2  \leq - 2 \exp^{2 \omega_5} \omega_5^2 +12 \left( \exp^{|c|} - 1 \right)  \exp^{2\omega_5} \omega_5^2   
\\ & + 12 \ln(2)    \exp^{2\omega_5} \omega_5   +    80 \ln(2)   \omega_5 + 56  \omega_5^2 + 12 \ln(2)^2.
\end{align*}
Next, we choose $-\delta$ even larger such that $12 \left( \exp^{|c|} - 1 \right) \leq 1$.
Hence, we obtain 
\begin{align*}
 \det \mathcal{A}_2  \leq  P(\omega_5) := & -  \exp^{2 \omega_5} \omega_5^2  + 12 \ln(2)    \exp^{2\omega_5} \omega_5  +   80 \ln(2)   \omega_5 + 56  \omega_5^2 + 12 \ln(2)^2.
\end{align*}
Clearly,  we can find  $\omega^{\star}_5>0$ such that,  for each 
$\omega_5 \geq \omega_5^\star$,  we have  
$P(\omega_5) \leq 0$. As a result,   a rough lower bound of $\delta_{o2}$, denoted $\delta^\star_{o2}$, can be chosen such that $ \sqrt{-4\delta^\star_{o2}}    = \left( \frac{\lambda} {2 \ln \left( 13/12 \right)}  \right)^2 + \lambda + 4 {\omega^\star_5}^2$. 
\end{proof}

\subsection{Proof of Lemma \ref{lemintegpart}}
Note that along the solutions to $\Sigma_2$, we have 
\begin{equation*} 
\begin{aligned} 
\dot{V}_1 = &  \int^{Y}_{0} w(x) w_t(x) dx =  -  \int^{Y}_{0} w(x)^2 w_x(x) dx  - \lambda_1  \int^{Y}_{0} w(x) w_{xx}(x) dx 
\\ & -  \int^{Y}_{0} w(x) w_{xxxx}(x) dx,
\\
\dot{V}_2 =  &  \int^{L}_{Y} v(x) v_t(x) dx =  -\int^{L}_{Y}  v(x)^2 v_x(x) dx  - \lambda_1 \int^{L}_{Y}  v(x) v_{xx}(x) dx \\ & - \int^{L}_{Y}  v(x) v_{xxxx}(x) dx.
\end{aligned}    
\end{equation*}
Next, we use the following identities
\begin{align*}
 \int^{Y}_{0} \hspace{-0.2cm} w(x)^2 w_x(x) dx &  = \left(w(Y)^3 - w(0)^3 \right)/3, 
\\
 \int^{Y}_{0} \hspace{-0.2cm} w(x) w_{xx}(x) dx & =   w(Y) w_x(Y) - 
w(0) w_x(0)   - \int^{Y}_{0} \hspace{-0.2cm} w_x(x)^2 dx, 
\\
\int^{Y}_{0} \hspace{-0.2cm} w(x) w_{xxxx}(x) dx  & = 
\int^{Y}_{0} \hspace{-0.2cm} w_{xx}(x)^2 dx   + w(Y) w_{xxx}(Y) 
\\ &  \qquad \qquad  
- w(0) w_{xxx}(0) - w_x(Y) w_{xx}(Y) +  w_x(0) w_{xx}(0). 
 \end{align*}
The latter three identities hold also if we replace $v$ therein by $w$. 
As a result,  we obtain
\begin{align*}
\dot{V}_1 = & -\frac{w(Y)^3 - w(0)^3}{3}  - 
\int^{Y}_{0} w_{xx}(x)^2 dx   
\\ & - \lambda_1 w(Y) w_x(Y) + \lambda_1 w(0) w_x(0)
 + \lambda_1 \int^{Y}_{0} w_x(x)^2 dx - 
w(Y) w_{xxx}(Y)  \\ & + w(0) w_{xxx}(0) + 
w_x(Y) w_{xx}(Y)   - w_x(0) w_{xx}(0), 
\\
\dot{V}_2 = & -\frac{v(L)^3 - v(Y)^3}{3}  - \int^{L}_{Y} v_{xx}(x)^2 dx   
\\ & 
- \lambda_1 v(L) v_x(L) + \lambda_1 v(Y) v_x(Y)
 + \lambda_1 \int^{L}_{Y} v_x(x)^2 dx - v(L) v_{xxx}(L) 
 \\ & +v(Y) v_{xxx}(Y) + v_x(L) v_{xx}(L)  - v_x(Y) v_{xx}(Y).  
\end{align*}
Finally,  using \eqref{eqDerich},  \eqref{eqLyapgen} follows. 
\hfill $\blacksquare$

\subsection{Proof of Lemma \ref{lem4}}
Given a scalar function $z : [a,b] \rightarrow \mathbb{R}$ in $H^2(a,b)$, we introduce the function $u : [a,b] \rightarrow \mathbb{R}$ given by
$$ u(x) := z(x) - \kappa(x) \qquad \forall x \in [a,b], $$
where $w$ is introduced in \eqref{eqomega}. Note that $u \in H^2_{or}(a,b)$ since $u(a) = z(a) - \kappa(a)$ and $\kappa(a) = z(a)$. Furthermore, $u(b) = z(b) - \kappa(b)$ and $\kappa(b) = z(b)$.  

As a result, using Lemma \ref{lem4-}, we conclude that, for $\lambda := 3\lambda_1$, we have 
\begin{align*}
- \int^{b}_{a} u_{xx}(x)^2 dx   + \lambda  \int^{b}_{a} u_x(x)^2 dx 
 \leq  - \delta \int^{b}_{a} u(x)^2 dx \qquad \forall \delta \leq \delta_o,
\end{align*}
where $\delta_o$ is the corresponding solution to Problem \ref{prob1}. The latter inequality implies  that 
\begin{align*}
 - \int^{b}_{a} [z_{xx}(x) - \kappa_{xx}(x)]^2 dx +  3\lambda_1  \int^{b}_{a} [z_{x}(x) - \kappa_{x}(x)]^2 dx  \leq 
-\delta \int^{b}_{a} [z(x) - \kappa(x)]^2 dx.
\end{align*}
Now, we use Lemma \ref{lem5} to derive the following inequalities 
$$ -[z_{xx}(x) - \kappa_{xx}(x)]^2 \geq 
- \frac{3}{2} z_{xx}(x)^2 - 3\kappa_{xx}(x)^2.  $$
$$ [z_{x}(x) - \kappa_{x}(x)]^2 \geq \frac{1}{2} z_{x}(x)^2 - \kappa_x(x)^2. $$
\begin{align*}
-\delta [z(x) - \kappa(x)]^2  \leq  \left(\frac{|\delta|}{2} - \delta \right) z(x)^2  + ( 2 |\delta| - \delta) \kappa(x)^2.
\end{align*}
As a consequence, we obtain
\begin{align*}
 - \frac{3}{2} \int^{b}_{a} z_{xx}(x)^2 dx   &
+ \frac{3\lambda_1}{2} \int^{b}_{a} z_{x}(x)^2 \\ &  \leq \left(\frac{|\delta|}{2} - \delta \right) \int^{b}_{a} z(x)^2 dx 
+ ( 2 |\delta| - \delta)  \int^{b}_{a} \kappa(x)^2 dx \\ & 
+ 3 \int^{b}_{a} \kappa_{xx}(x)^2 dx  + 3 \lambda_1  \int^{b}_{a} \kappa_{x}(x)^2 dx,
\end{align*}
which yields to
\begin{align*}
 - \frac{3}{2} \int^{b}_{a} z_{xx}(x)^2 dx  & + \frac{3\lambda_1}{2} \int^{b}_{a} z_{x}(x)^2 \\ &  
 \leq  \frac{3\delta_1}{2} \int^{b}_{a} z(x)^2 dx   
 + \frac{3\delta_2}{2}  \int^{b}_{a} \kappa(x)^2 dx + 3 \int^{b}_{a} \kappa_{xx}(x)^2 dx  \\ & + 3\lambda_1  \int^{b}_{a} \kappa_{x}(x)^2 dx. 
\end{align*}
Hence, we deduce that
\begin{align*}
 - \int^{b}_{a} z_{xx}(x)^2 dx & + \lambda_1 \int^{b}_{a} z_{x}(x)^2 
\\ &
\leq  
\delta_1 \int^{b}_{a} z(x)^2 dx +  
\delta_2 \int^{b}_{a} \kappa(x)^2 dx \\ & + 2 \int^{b}_{a} \kappa_{xx}(x)^2 dx + 2\lambda_1  \int^{b}_{a} \kappa_{x}(x)^2 dx. 
\end{align*}
Finally, using \eqref{eqCs}, we obtain 
\begin{align*}
 - \int^{b}_{a} z_{xx}(x)^2 dx +~& \lambda_1 \int^{b}_{a} z_{x}(x)^2 \leq  
\delta_1 \int^{b}_{a} z(x)^2 dx + \\ & 
\delta_2 C_{z2}(z(a),z(b)) +  C_{z1}(z(a),z(b)). 
\end{align*}
\hfill $\blacksquare$

\subsection{Proof of Lemma \ref{lemfd}}
A direct application of Lemma \ref{lem4} allows us to re-express \eqref{eqLyapgen} as 
\begin{equation*}
\begin{aligned} 
\hspace{-0.3cm} & - \int^{Y}_{0}  w_{xx}(x)^2 dx   + \lambda_1 \int^{Y}_{0} w_x(x)^2 dx  \leq   \delta_1 \int^{Y}_{0} w(x)^2 dx 
\hspace{-0.3cm} \\ &   \qquad \quad  +  C_{w1} (u_1,u_2) + \delta_2  C_{w2}(u_1,u_2)  + \lambda_1  C_{w3}(u_1,u_2).
\end{aligned}
\end{equation*}
Similarly, we have that
\begin{equation} 
\label{eqineqv} 
\begin{aligned} 
\hspace{-0.2cm} - \int^{L}_{Y}  & v_{xx}(x)^2 dx   + \lambda_1 \int^{L}_{Y} v_x(x)^2 dx  \leq  \delta_1 \int^{L}_{Y} v(x)^2 dx 
\\ &
\hspace{-0.2cm}  + C_{v1} (u_2,u_3) + \delta_2  C_{v2}(u_2,u_3) + \lambda_1 C_{v3}(u_2,u_3). 
\end{aligned}
\end{equation}
 As a result, we obtain
\begin{equation*}
\begin{aligned}
\dot{V}_1 \leq &~  \delta_1 \int^{Y}_{0} w(x)^2 dx  + C_{w1} (u_1,u_2) + \delta_2  C_{w2}(u_1,u_2)
\\ &
 + \lambda_1  C_{w3}(u_1,u_2) -  \frac{u_2^3 - u_1^3}{3} - u_2 w_{xxx}(Y) + u_1 w_{xxx}(0), 
\\
\dot{V}_2 \leq  &~   \delta_1 \int^{Y}_{0} v(x)^2 dx   + C_{v1} (u_2,u_3) + \delta_2  C_{v2}(u_2,u_3) 
\\ &
+ \lambda_1  C_{v3}(u_2,u_3) -  \frac{u_3^3 - u_2^3}{3} - u_3 v_{xxx}(L)  +
u_2 v_{xxx}(Y).  
\end{aligned}
\end{equation*}
Finally,  using \eqref{eq.4}, we obtain \eqref{eqLyapgenbis1}.
\hfill $\blacksquare$

\subsection{Proof of Lemma \ref{lemdesign}}

We distinguish between two complementary situations.
 When 
\begin{align*}
|w_{xxx}(0)| & \geq l_1(V_1) :=  \frac{V_1^2}{3} + (a_{w1}  + \delta_2 a_{w2})  V_1  + (\alpha_1 + 2 \delta_1);
\end{align*}
namely, when
$$ V_1 |w_{xxx}(0)|  \geq \frac{V_1^3}{3} + (a_{w1}  + \delta_2 a_{w2} + \lambda_1 a_{w3})  V_1^2  + (\alpha_1 + 2 \delta_1) V_1, $$
we take  $ \kappa_1(V_1,w_{xxx}(0)) := - \text{sign} (w_{xxx}(0)) V_1 $ so that \eqref{eqdesign1} is satisfied. 
 Otherwise, when $|w_{xxx}(0)|  < l_1(V_1)$,  we choose $\kappa_1$ such that
\begin{equation}
\label{eqineq1}
\begin{aligned} 
\kappa^3_1 & +3 (a_{w1}  + \delta_2 a_{w2} + \lambda_1 a_{w3})  \kappa_1^2 + 3 |\kappa_1| |l_1(V_1)|  
 \leq  -3(\alpha_1 + 2 \delta_1) V_1,
\end{aligned}
\end{equation}
which implies that \eqref{eqdesign1} is satisfied. 

The same reasoning applies when designing $u_3$  to satisfy \eqref{eqdesign2}.  We distinguish between two complementary situations. When 
\begin{align*}
|v_{xxx}(L)| & \geq l_3(V_2) :=  \frac{V_2^2}{3} + (b_{v1}  + \delta_2 b_{v2} + \lambda_1 b_{v3} )  V_2  + (\alpha_2 + 2 \delta_1);
\end{align*}
namely, when
$$ V_2 |v_{xxx}(L)|  \geq \frac{V_2^3}{3} + (b_{v1}  + \delta_2 b_{v2}  + \lambda_1 b_{v3} )  V_2^2  + (\alpha_2 + 2 \delta_1) V_2, $$
we take $ \kappa_3(V_2,v_{xxx}(L)) := - \text{sign} (v_{xxx}(L)) V_2 $
so that \eqref{eqdesign2} is satisfied. 
Otherwise, when $|v_{xxx}(L)|  < l_3(V_2)$,  we choose $\kappa_3$ such that
\begin{equation}
\label{eqineq1bis}
\begin{aligned} 
\kappa^3_3  +3 (b_{v1}  + \delta_2 b_{v2} + \lambda_1 b_{v3})  \kappa_3^2 & + 3 |\kappa_3| |l_3(V_2)|   \leq  -3(\alpha_2 + 2 \delta_1) V_2,
\end{aligned}
\end{equation}
 which implies that \eqref{eqdesign2} is satisfied. 
\hfill $\blacksquare$

\subsection{Proof of Lemma \ref{lemwxxx}}
We re-express the term $w_{xxx}(0)$ as
\begin{align*}
 w_{xxx}(0) = w_{xxx}(Y) - \int^{Y}_{0} w_{xxxx} (x) dx 
 \end{align*}
Now, using  the first equation in $\Sigma_2$, we conclude that
 \begin{align*}
 w_{xxx}(0) & =  w_{xxx}(Y)  +  \int^{Y}_{0} \left[w_t(x) + \lambda_1 w_{xx}(x) + w(x) w_x(x)\right] dx
\\ & =  w_{xxx}(Y) +  \int^{Y}_{0} w_t(x) dx + \lambda_1  \int^{Y}_{0} w_{xx}(x) dx + \int^{Y}_{0} w(x) w_x(x) dx
\\ & = w_{xxx}(Y) +  \int^{Y}_{0} w_t(x) dx + \lambda_1 [w_{x}(Y) - w_{x}(0)]  +  [w(Y)^2 - w(0)^2]/2. 
\end{align*} 
 Using the fact that $w_x(Y) = w_x(0) = 0$, $w(0)=u_1$, and $w(Y) = u_2$, we obtain 
 \begin{align*}
w_{xxx}(0) = & ~ w_{xxx}(Y) +  \int^{Y}_{0} w_t(x) dx + \left(u^2_2 - u^2_1\right)/2
\\ = & ~  w_{xxx}(Y) +  \left( u^2_2 - u^2_1 \right)/2  
+ \frac{d}{dt} \left(\int^{Y}_{0} w(x) dx \right). 
\end{align*}
\hfill $\blacksquare$

\subsection{Proof of Lemma \ref{lemdesu2}}
We start letting $\zeta := v_{xxx}(Y)$.  Next, we note that the input $u_2$ needs to satisfy the inequality
\begin{equation} \label{equ2}
\begin{aligned}
 u^3_2 +  3u_2 \zeta \leq -3 \alpha_2 V_2^3.
\end{aligned}
\end{equation}
 For this,  we distinguish between the following two situations.
\begin{enumerate}
\item When $|\zeta|   \geq  k (\alpha_2^{\frac{1}{3}} V_2)  := 2 \alpha_2^{\frac{2}{3}} V_2^2$,
we conclude that $\alpha_2^{\frac{1}{3}} V_2  |\zeta|  \geq  2 \alpha_2 V_2^3$.
Hence,  by taking
$u_2 := \kappa_2 \left(\alpha_2^{\frac{1}{3}} V_2, \zeta \right) : =  - \text{sign} (\zeta) \alpha_2^{\frac{1}{3}} V_2$, we conclude that \eqref{equ2} is satisfied. 

\item In the opposite scenario; namely, when $|\zeta|  <  k(\alpha_2^{\frac{1}{3}} V_2)$, 
we choose $u_2 := \kappa(\alpha_2^{\frac{1}{3}} V_2)$, where $\kappa$ is  solution to
\begin{align} \label{eqtosat}
\kappa(\alpha_2^{\frac{1}{3}} V_2)^3 + 3|\kappa(\alpha_2^{\frac{1}{3}} V_2)|  k(\alpha_2^{\frac{1}{3}} V_2)  \leq -3 \alpha_2 V_2^3. 
\end{align}
Indeed, the latter inequality implies that 
$$ u^3_2 + 3|u_2| |\zeta|  \leq -3 \alpha_2 V_2^3.  $$ 
Hence,  \eqref{equ2} is satisfied. Now, to solve \eqref{eqtosat} while verifying Assumption \ref{ass2}, we take $\kappa(\alpha_2^{\frac{1}{3}} V_2) := -\beta \alpha_2^{\frac{1}{3}} V_2$ and we choose $\beta$ sufficiently large such that  
\begin{align*}
- \beta^3 \alpha_2^{\frac{1}{3}} V_2^3 + 3 \beta \alpha_2^{\frac{1}{3}} V_2 k(\alpha_2^{\frac{1}{3}} V_2) \leq - 3 \alpha_2^{\frac{1}{3}} V_2^3 . 
\end{align*}
\end{enumerate}
As a result, we design the input $\kappa_2$ to satisfy 
\begin{equation}
\label{eqcontrol}
\begin{aligned}
\kappa_2(\alpha_2^{\frac{1}{3}} V_2,\zeta) := 
\left\{ \begin{matrix}
  - \text{sign} (\zeta) \alpha_2^{\frac{1}{3}} V_2 & \text{if} ~ |\zeta| \geq 2 \alpha_2^{\frac{1}{3}} V_2^2 \\
- \beta \alpha_2^{\frac{1}{3}} V_2 & \text{otherwise}.
\end{matrix} \right.
\end{aligned}
\end{equation}
To verify Assumption \ref{ass2}, we note that \eqref{asscond1} is satisfied with $P := \beta + 1$. Next, to verify \eqref{asscond2}, we consider an absolutely continuous signal  
 $t \rightarrow \left(\alpha_2^{\frac{1}{3}} V_2(t), \zeta(t) \right)$. According to \eqref{eqcontrol}, $ t \mapsto u_2(t) = \kappa \left( \alpha_2^{\frac{1}{3}} V_2(t),\zeta(t) \right)$ is differentiable almost everywhere and, for almost all $t$,  we have 
\begin{align*} 
 \bigg| \frac{d}{dt} u_2(t) \bigg| & \leq \bigg| \frac{\partial u_2}{\partial  \left(\alpha_2^{\frac{1}{3}} V_2\right) } (t) \bigg|   \bigg| \frac{d}{dt} \alpha_2^{\frac{1}{3}} V_2(t) \bigg|,
 \end{align*} 
where, for almost all $t$,  $\bigg| \frac{\partial u_2}{\partial \left(\alpha_2^{\frac{1}{3}} V_2\right) } (t) \bigg|  \leq  1 + \beta =: Q$.
\hfill $\blacksquare$

\subsection{Proof of Lemma \ref{lem8}}
Consider the function 
$$ f(\pi) := \pi^3  - \left( 2 \delta_1  \alpha_2^{-\frac{1}{3}}+ 1 \right) \pi.  $$
Note that the derivative of $f$ with respect to $\pi$ is given by 
$$ f'(\pi) = 3 \pi^2  - \left( 2 \delta_1 \alpha_2^{-\frac{1}{3}} + 1 \right).   $$
Hence,  the equation $f'(\pi) = 0$ has a unique solution $\pi^\star$ given by 
$$ \pi^\star  := \frac{ \sqrt{3 \left( 2 \delta_1 \alpha_2^{-\frac{1}{3}} + 1 \right) }}{3} $$  
at which $f$ attains its minimum value.  
Hence,  for \eqref{eqlem81} to hold,  we take 
\begin{equation*} 
\begin{aligned}
& b_o :=  f(\pi^\star) =   - \left(\frac{ \left[3 (2 \delta_1 \alpha_2^{-\frac{1}{3}} + 1)  \right]^{\frac{1}{2}}}{3}  \right)^3  +
\frac{ 2 \delta_1 \alpha_2^{-\frac{1}{3}}  +1 }{3}  \left( \left[ 3 (2 \delta_1 \alpha_2^{-\frac{1}{3}} + 1)  \right]^{\frac{1}{2}} \right). 
\end{aligned}
\end{equation*}
\hfill $\blacksquare$
\fi

\end{document}